

\documentclass[11pt,reqno]{amsart}
\title{Perverse schobers, stability conditions and quadratic differentials I}

\author{Merlin Christ}
\address{MC: Mathematisches Institut, Universit\"at Bonn, Endenicher Allee 60, 53115 Bonn, Germany}
\email{christ@math.uni-bonn.de}
\thanks{
M.C. was funded by the Deutsche Forschungsgemeinschaft
(DFG, German Research Foundation) under Germany’s Excellence Strategy–EXC-2047/1–390685813. M.C. acknowledges support by the Deutsche Forschungsgemeinschaft under Germany’s Excellence Strategy–EXC 2121 “Quantum Universe” – 390833306.}

\author{Fabian Haiden}
\address{FH: Centre for Quantum Mathematics, Department of Mathematics and Computer Science, University of Southern Denmark, Campusvej 55, 5230 Odense, Denmark}
\email{fab@sdu.dk}
\thanks{
F.H. is supported by the VILLUM FONDEN, VILLUM Investigator grant 37814 and the Sapere Aude grant 3120-00076B from the Independent Research Fund Denmark (DFF).
This paper is partly a result of the ERC-SyG project Recursive and Exact New Quantum Theory (ReNewQuantum) which received funding from the European Research Council (ERC) under the European Union's Horizon 2020 research and innovation programme under grant agreement No 810573.
}

\author{Yu Qiu}
\address{Qy:
	Yau Mathematical Sciences Center and Department of Mathematical Sciences,
	Tsinghua University,
    100084 Beijing,
    China.
    \&
    Beijing Institute of Mathematical Sciences and Applications, Yanqi Lake, Beijing, China}
\email{yu.qiu@bath.edu}
\thanks{
Qy is supported by
National Natural Science Foundation of China (No. 12425104, 12031007 and 12271279) and
National Key R\&D Program of China (No. 2020YFA0713000).}

\usepackage[margin=3cm]{geometry}

\usepackage{float}

\usepackage[colorlinks, linkcolor=blue!50,anchorcolor=Periwinkle,
    citecolor=blue!72,urlcolor=cyan, bookmarksopen,bookmarksdepth=2]{hyperref}
\usepackage[usenames,dvipsnames]{xcolor}
\setlength{\unitlength}{2.7pt}
\usepackage{enumerate}
\usepackage{subfigure}
\usepackage{bookmark}
\usepackage{url}
\usepackage{ifthen}
\usepackage{cleveref}
\usepackage{enumitem}
\usepackage{array}   
\newcolumntype{L}{>{$}l<{$}} 
\usepackage{verbatim}

\usepackage{tikz}\usetikzlibrary{matrix}
\usepackage{tikz-cd}

\usetikzlibrary{matrix, calc, arrows, shapes,
decorations.pathreplacing, decorations.markings, decorations.pathmorphing,
backgrounds,fit,positioning,shapes.symbols,chains,shadings,fadings}

\tikzset{->-/.style={decoration={  markings,  mark=at position #1 with
    {\arrow{>}}},postaction={decorate}}}
\tikzset{-<-/.style={decoration={  markings,  mark=at position #1 with
    {\arrow{<}}},postaction={decorate}}}

\usepackage{amsfonts}
\usepackage{amsmath,amssymb,amsthm,amsxtra}
\usepackage{mathtools} 
\usepackage{extarrows} 


\theoremstyle{plain}
\newtheorem{theorem}{Theorem}[section]
\newtheorem{lemma}[theorem]{Lemma}
\newtheorem{corollary}[theorem]{Corollary}
\newtheorem{proposition}[theorem]{Proposition}

\theoremstyle{definition}
\newtheorem{definition}[theorem]{Definition}
\newtheorem{example}[theorem]{Example}
\newtheorem{remark}[theorem]{Remark}
\newtheorem{construction}[theorem]{Construction}

\numberwithin{equation}{section}
\numberwithin{figure}{section}






\def\be{\begin{equation}}
\def\ee{\end{equation}}

\setlength\parindent{0pt}
\setlength{\parskip}{5pt}

\newcommand\hua{\mathcal}

\newcommand\NN{\mathbb{N}}
\newcommand\ZZ{\mathbb{Z}}

\newcommand\RR{\mathbb{R}}
\newcommand\CC{\mathbb{C}}

\newcommand\<{\langle}
\renewcommand\>{\rangle}

\renewcommand{\setminus}{\smallsetminus}
\renewcommand{\emptyset}{\varnothing}
\newcommand{\isom}{\cong}
\newcommand\noloc{
  \nobreak
  \mspace{6mu plus 1mu}
  {:}
  \nonscript\mkern-\thinmuskip
  \mathpunct{}
  \mspace{2mu}
}


\def\on{\operatorname} 
\mathchardef\mhyphen="2D

\newcommand\Sim{\operatorname{Sim}}
\newcommand\Hom{\operatorname{Hom}}

\newcommand\Ext{\operatorname{Ext}}


\newcommand{\h}{\operatorname{\hua{H}}} 
\newcommand{\C}{\operatorname{\hua{C}}} 
\newcommand{\D}{\operatorname{\hua{D}}} 

\newcommand\Stab{\operatorname{Stab}} 
\newcommand\Stap{\Stab^\circ} 

\newcommand{\EG}{\operatorname{EG}} 
\newcommand{\EGp}{\EG^\circ}       
\newcommand{\EGb}{\EG^\bullet}       



\newcommand{\tilt}[3]{{#1}^{#2}_{#3}}
\newcommand{\Cone}{\operatorname{Cone}}

\usepackage[all]{xy}

\def\wt{\mathbf{w}}
\newcommand\surf{\mathbf{S}}  
\newcommand\sow{\surf_\wt}  


\newcommand\T{\mathbb{T}} 
\newcommand\M{\mathbf{M}} 
\newcommand\W{\Delta} 
\newcommand\A{\Gamma} 

\newcommand{\FQuad}[2]{\operatorname{FQuad}^{#1}(#2)}

\newcommand{\UHP}{\mathbf{H}} 
\newcommand{\skel}{\wp} 
\newcommand{\cub}{\operatorname{U}} 




\tikzcdset{arrow style=tikz, diagrams={>=stealth}}

\newcommand*\cocolon{
        \nobreak
        \mskip6mu plus1mu
        \mathpunct{}%
        \nonscript
        \mkern-\thinmuskip
        {:}%
        \mskip2mu
        \relax
}

\def\hX{\hua{X}}

\def\hT{\hua{T}}
\def\hF{\hua{F}}
\def\hP{\hua{P}}

\def\hC{\hua{C}}

\def\Hone{\operatorname{H}_1}

\newcommand\AS{\mathbb{A}} 
\newcommand\dAS{\AS^*} 
\newcommand\Sgh{\mathbb{S}}
\newcommand\eS{\widehat{\Sgh}}
\def\nn{node{$\bullet$}}
\def\ww{node[white]{$\bullet$}node[red]{$\circ$}}

\newcommand\rgraph{{\bf G}} 
\newcommand\glsec{{\Gamma}} 
\newcommand\losec{\Gamma_{\rm{lax}}} 
\newcommand\CS{\hC({\Sgh,\hF})} 
\newcommand\CSh{\CS^\heartsuit} 
\newcommand\Ch{\hC^\heartsuit} 
\def\ASG{\A_\Sgh} 
\newcommand\rg{{\widehat{\rgraph}}} 

\def\eprime{h}
\def\aprime{b}
\def\vprime{u}
\def\spider{\on{\mathbf{Sp}}}
\def\NUM{\Lambda}
\def\NUMS{\Lambda_\Sgh}
\def\prodS{\Pi_{\mathrm{sph}}}     


\begin{document}
\begin{abstract}
We develop a unified approach for identifying spaces of stability conditions of triangulated categories arising from weighted marked surfaces with moduli spaces of quadratic differentials.
This identification is based on the use of perverse schobers (perverse sheaves of triangulated categories) and a notion of \textit{positive arc system kit} on a perverse schober $\mathcal F$, which provides a systematic way of assigning to a graded curve on the surface a global section of $\mathcal F$. This assignment allows us to identify mixed-angulations and their flips with finite-length hearts and their tilts. As an application we obtain a generalization of the results of Bridgeland--Smith to quadratic differentials with arbitrary singularity type (zero/pole/exponential).

\bigskip\noindent
\emph{Key words:}
perverse schober, quadratic differential, stability condition, tilting theory
\end{abstract}
\maketitle
\vspace{-1.4em}
\tableofcontents\addtocontents{toc}{\protect\setcounter{tocdepth}{1}}

\section{Introduction}

In the seminal work~\cite{B1}, Bridgeland introduced the notion of a stability condition on a triangulated category
based on results on slope stability of vector bundles over curves and $\Pi$-stability of D-branes in string theory.
The main result of~\cite{B1} is that the space of stability conditions on a fixed triangulated category is a complex (in fact affine) manifold.
It was suggested independently by Kontsevich and Seidel that
moduli spaces of quadratic differentials could be closely related to spaces of stability conditions on Fukaya-type categories of surfaces.
Several results of this kind for various classes of quadratic differentials and categories associated with surfaces are available by now \cite{BS,Qiu16,HKK17,Ike17,IQ18,KQ2,BQS,Hai21,BMQS}.
The main novelty of the present work is that
it unifies and generalizes many of these results using the notion of perverse schobers on a surface (due to Kapranov--Schechtman~\cite{KS14}).

The present work can also be seen as part of a general program, initially suggested by Kontsevich, of studying stability conditions on Fukaya categories of surfaces with coefficients in a perverse schober, see also \cite{HKS}.

\subsection{Weighted marked surfaces and mixed-angulations}
To link the geometry of quadratic differentials and the algebra of tilting theory we use a combinatorial structure called a \textit{weighted marked surface} $\sow$ together with a \textit{mixed-angulation}. This is an oriented surface obtained by gluing polygons of varying numbers $m\geq 1$ of edges along pairs of edges. We also allow the limiting case $m=\infty$, i.e.\ $\infty$-gons. The underlying surface $\bf S$ with boundary inherits some additional markings from the mixed-angulation which make it a \textit{weighted marked surface}: the set $\M \subset\bf S$ of vertices of the polygons, the \textit{marked points}, and a set $\W\subset\bf S$ of \textit{singular points}, each equipped with a degree $1\leq m\leq \infty$, such that each $m$-gon contains a unique singular point (lying on its boundary if and only if $m=\infty$). The mixed-angulation also induces a grading structure on $\bf S$. We refer to \Cref{subsec:WMS} for the precise definitions. The purpose of the notion of a weighted marked surface is to collect the common data contained in different mixed-angulations of the same surface that are related by sequences of flips.

Mixed-angulations with their polygons with varying numbers of edges, including $\infty$-gons, generalize the well-known triangulations and $n$-angulations of marked surfaces with finitely many marked points: a triangulation or $n$-angulation corresponds to a mixed-angulation where all polygons are triangles, respectively, $n$-gons.

The dual graph, $\Sgh$, of a mixed-angulation, has as vertices the set $\W$ of singular points and edges dual to each of the interior edges of the mixed-angulation. $\Sgh$ has the structure of an \textit{S-graph}, which is a ribbon graph with a positive number assigned to each corner keeping track of the number ($+1$) of boundary edges between a given pair of interior edges of a polygon of the corresponding mixed-angulation. Some examples are depicted in \Cref{fig:S-graphs}. As can be seen in the second example in \Cref{fig:S-graphs}, the difference between the valency $r$ and the degree $m$ of a singular point counts the number of boundary edges in the polygon.

\begin{figure}[ht]\centering
\makebox[\textwidth][c]{
\begin{tikzpicture}[scale=.4,rotate=0,arrow/.style={->-=.5,>=stealth,thick}]
\draw[thick](0,0) circle (5);
\foreach \j in {0,...,4}{
    \draw[dashed,white,very thick]
    (54+72*\j:5)arc (54+72*\j:54+72*\j+18:5)
    (54+72*\j:5)arc (54+72*\j:54+72*\j-18:5);}
\foreach \j in {0,...,4}{
    \draw[font=\small] (90+72*\j:5) coordinate  (w\j) edge[blue, thick] (18+72*\j:5)
    (72*\j+5:5)\nn(72*\j-5:5)\nn;}
\draw[blue, thick](w3)to(w0)to(w2);
\foreach \j in {0,...,4}{\draw(w\j)\nn;}
   \draw[red,thick,font=\tiny](0,-5) node[below]{$\infty$}\ww to (0,0) node[above]{$3$}\ww to (90-72:3.2) node[right]{$3$}\ww to (90-36:5) node[above]{$\infty$}\ww (90-72:3.2)\ww to (90-108:5) node[right]{$\infty$}\ww;
    \draw[red,thick,font=\tiny](0,0) \ww to (90+72:3.2) node[left]{$3$}\ww to (90+36:5) node[above]{$\infty$}\ww (90+72:3.2)\ww to(90+108:5)node[left]{$\infty$}\ww;
\begin{scope}[shift={(-24,0)}]
\draw[red,thick](0:2.5)to(90:2.5)to(180:2.5)to(-90:2.5)to(0:2.5);
    \draw[red,thick,font=\tiny] (0:2.5) node[left]{$3$}\ww to (0:4.5);
    \draw[red,thick,font=\tiny] (90:2.5) node[below]{$3$}\ww to (90:4.5);
    \draw[red,thick,font=\tiny] (180:2.5) node[right]{$3$}\ww to (180:4.5);
    \draw[red,thick,font=\tiny] (270:2.5) node[above]{$3$}\ww to (270:4.5);

\begin{scope}[thick,blue,decoration={
    markings,
    mark=at position 0.5 with {\arrow{stealth}}}
    ]
    \draw[postaction={decorate}] (-4.5,-4.5) to (-4.5,4.5);
    \draw[postaction={decorate}] (4.5,-4.5) to (4.5,4.5);
\end{scope}
\begin{scope}[thick,blue,decoration={
    markings,
    mark=at position 0.5 with {\arrow{{stealth}{stealth}}}}
    ]
    \draw[postaction={decorate}] (-4.5,4.5) to (4.5,4.5);
    \draw[postaction={decorate}] (-4.5,-4.5) to (4.5,-4.5);
\end{scope}
\draw[blue,thick](4.5,-4.5) to (-4.5,4.5) (-4.5,-4.5) to (4.5,4.5);
\draw(0,0)\nn(4.5,-4.5)\nn(-4.5,4.5)\nn(-4.5,-4.5)\nn(4.5,4.5)\nn;
\end{scope}
\begin{scope}[shift={(-12,0)}]
\foreach \j in {0,...,8}{\draw (40*\j:5) coordinate  (w\j) edge[very thick] (40+40*\j:5);}
\draw[blue,thick](w0)to(w0)to[bend left=-15](w3)to[bend left=-15](w6)to[bend left=-15](w0)
    (5,0).. controls +(160:3) and +(90:.5) ..(1.25,0).. controls +(-90:.5) and +(200:3) ..(5,0)
    (5,0).. controls +(140:5) and +(90:1) ..(-1.25,0).. controls +(-90:1) and +(220:5) ..(5,0);
\foreach \j in {0,...,8}{\draw(w\j)\nn;}
\draw[red,thick,font=\tiny](-4,0)\ww node[above]{$4$}to(0,0)\ww node[above]{$2$}to(2.5,0)\ww node[right]{$1$}
    (45:4)\ww node[above]{$4$}to[bend left=-30](-2.5,0)\ww node[above]{$4$}to[bend left=-30](-45:4)\ww node[below]{$4$};
\end{scope}
\end{tikzpicture}}
\caption{Three examples of weighted marked surfaces with a mixed-angulation and dual S-graph. The left one is an ideal triangulation of the torus (with no boundary) with two interior marked points. The central and right examples are mixed-angulations of the disk. The central one has singular points with degrees $1,2$, and $4$ which are the centers of $1$-, $2$-, and $4$-gons, respectively. The right one includes boundary singular points and infinitely many boundary marked points arising from $\infty$-gons.}
\label{fig:S-graphs}
\end{figure}

\subsection{Perverse schobers and the tilting theory of arcs}\label{introsec:schobers}
Perverse schobers are a, in general conjectural, categorification of perverse sheaves proposed by Kapranov--Schechtman \cite{KS14}.
Perverse sheaves on surfaces with boundary can be described in terms of
constructible (co)sheaves on ribbon graphs embedded in the surfaces, see \cite{KS16}. To categorify this description, we consider constructible sheaves of stable $\infty$-categories on ribbon graphs, subject to certain local conditions, see \Cref{sec:backgroundschobers} and \cite{Chr22} for discussions. Such a constructible sheaf is called a perverse schober parametrized by the ribbon graph. Concretely, given a ribbon graph ${\bf G}$, we can describe constructible sheaves on ${\bf G}$ as functors $\on{Exit}({\bf G})\to \on{St}$ from the exit path category of the ribbon graph to the $\infty$-category of stable $\infty$-categories.

Given a perverse schober $\mathcal{F}$ parametrized by a ribbon graph ${\bf G}$, we can consider its stable $\infty$-category of global sections, which is simply defined as the $\infty$-categorical limit of the functor $\on{Exit}({\bf G})\to \on{St}$.  This category can be thought of as a topological Fukaya category of the surface \textit{with coefficients} in $\mathcal{F}$, analogous to how sheaf cohomology generalizes ordinary cohomology. By choosing suitable perverse schobers, one obtains as global sections many derived categories naturally constructed from surfaces with markings, see also \Cref{introsec:examples} below for examples.

Consider now a weighted marked surface $\sow$ with a mixed-angulation and dual S-graph $\Sgh$.  Each graded arc in $\sow$, meaning an embedded curve with endpoints at the singular points together with a grading relative to the grading structure of $\sow$, defines an object in the topological Fukaya category. Given an arbitrary perverse schober $\mathcal{F}$ parametrized by the S-graph $\Sgh$, we can ask if we can similarly associate global sections with graded arcs. For this, we intersect the graded arc with the mixed-angulation, yielding local graded arc segments that decompose the arc. A natural way to obtain a global section from the arc is to associate a local section of the perverse schober to each such arc segment, such that these local sections are compatible on their overlaps, and thus glue to a global section\footnote{More precisely, we will formally implement the gluing of local sections by gluing the corresponding lax sections of $\hF$. Such lax sections are an $(\infty,2)$-categorical notion and the stable $\infty$-category of lax sections of $\hF\colon \on{Exit}(\Sgh)\to \on{St}$, see \Cref{def:sections}, describes the lax limit of $\hF$. For $x\in \on{Exit}(\Sgh)$ a vertex or edge of the S-graph, a local section at $x$ refers to an object in the stalk $\mathcal{F}(x)$ of the perverse schober $\mathcal{F}$ at $x$. To a local section, we universally associate a lax section via a relative Kan extension. Their gluing can then conveniently be formulated as taking a colimit in the $\infty$-category of lax sections.}. Since we are considering an arbitrary perverse schober, we need to assume that there exists a recipe that allows us to associate suitable local sections with graded arc segments.

We introduce in \Cref{sec:arcsystemkit} the notion of an \textit{arc system kit} for an $\Sgh$-parametrized perverse schober, which is a collection of the data necessary to associate the desired local sections with graded arc segments. Using the ansatz explained above, given an arc system kit for $\mathcal{F}$, we can thus associate a global section of $\mathcal{F}$ to each graded arc.

The simplest examples of graded arcs are the edges of the S-graph $\Sgh$ itself, so that each such edge has an associated global section. We denote the collection of the so-obtained global sections by $\A_{\Sgh}$. We compute the derived Homs between the objects in $\A_{\Sgh}$, and relate them with counts of intersections of the edges in the S-graph, see \Cref{prop:inthom}. Under certain local criteria on the arc system kit, referred to as positivity, we find that $\A_{\Sgh}$ forms a simple-minded collection, meaning that 
\begin{itemize}
    \item for $\A_e\in \A_{\Sgh}$ we have 
    \[ 
    \on{Ext}^{-i}(\A_e,\A_e)\simeq \begin{cases} k & \text{ for }i=0\\ 0 & \text{ for }i>0\,,\end{cases}
    \]
    \item and for $\A_e,\A_f\in \A_{\Sgh}$, with $\A_e\not\simeq \A_f$, we have
    \[ \on{Ext}^{-i}(\A_e,\A_f)\simeq 0 \qquad \text{for }i\geq 0\,.\]
\end{itemize}
In this case, $\A_{\Sgh}$ is thus the set of simple objects in a heart of a bounded $t$-structure, defined on the stable subcategory generated by the objects in $\A_{\Sgh}$, denoted $\CS$. Note that this is the motivation for the name S-graph: its edges are in bijection with these simple objects. 

We next relate the tilting theory of the finite hearts with the geometry of the mixed-angulations and their dual S-graphs. Given an edge $e$ of the S-graph $\Sgh$, there are two ways of flipping the edge $e$ to obtain a new S-graph, namely flipping backward and forward. These two operations on S-graphs are inverse to each other. The edges of the flipped S-graph again describe a set of graded arcs, whose corresponding simple-minded collection is the backward or forward tilt of the simple-minded collection $\A_{\Sgh}$ at the simple object associated with $e$. 


We summarize our results on simple-minded collections as follows, see also \Cref{constr:glsec}, \Cref{prop:inthom}, \Cref{thm:tilt=flip}, and \Cref{cor:iso}:

\begin{theorem}\label{introthm:tilt}
    Let $\sow$ be a weighted marked surface with non-empty set of marked points. Let $\mathcal{F}$ be a perverse schober parametrized by an extended S-graph of $\sow$. Suppose that $\mathcal{F}$ is equipped with a positive arc system kit. Then there exists a stable subcategory $\CS$ of the stable $\infty$-category of global sections of $\mathcal{F}$ satisfying the following:
    \begin{enumerate}
    \item[(i)]
     Gluing of sections of $\mathcal{F}$ yields an injective arc-to-object assignment 
        \[ 
        \textit{graded arc }\gamma\textit{ in }\sow~\mapsto~\A_\gamma\in \CS\,.
        \] 
    \item[(ii)] For each S-graph $\Sgh$ of $\sow$, the collection $\A_\Sgh=\{\A_e\}$ of global sections associated with the edges of $\Sgh$ forms a generating simple-minded collection in $\CS$, with corresponding heart $\mathcal{H}_{\Sgh}\subset \CS$. 
    \item[(iii)] If $\Sgh\to \Sgh^\sharp$ is a forward flip of S-graphs at an edge $e$, then $\mathcal{H}_{\Sgh^\sharp}$ is the forward simple tilt of $\mathcal{H}_{\Sgh}$ at $\A_e$, in the sense of \Cref{def:forward_tilt}.
    \end{enumerate}
\end{theorem}

Parts (i) and (iii) in \Cref{introthm:tilt} lead to an embedding of the  exchange graph of S-graphs of $\sow$ into the exchange graph of hearts of $\CS$, see \Cref{cor:iso}. The latter exchange graph can be considered as a kind of skeleton of the space of stability conditions on $\CS$.  

The innovation in \Cref{introthm:tilt} is that it does consider any specific $\Sgh$-parametrized perverse schober $\mathcal{F}$ and corresponding category $\CS$, but instead introduces the minimal data for $\mathcal{F}$ that leads to the correspondence between S-graphs and hearts, namely a positive arc system kit. This makes the theorem applicable to many different perverse schobers and triangulated categories of interest. 

In the proof of part (iii) of \Cref{introthm:tilt}, a central step is the reduction to the case that the S-graph $\Sgh$ is the ribbon graph parameterizing the perverse schober $\mathcal{F}$. To perform this reduction, we show that the perverse schober $\mathcal{F}$ can be ``transported'' along a forward flip $\Sgh\to \Sgh^\sharp$ to a $\Sgh^\sharp$-parametrized perverse schober $\mathcal{F}^\sharp$ together with a positive arc system kit, see \Cref{prop:arcsystemkitflip}, and such that $\CS\simeq \C(\Sgh^\sharp,\mathcal{F}^\sharp)$. The possibility of canonically transporting perverse schobers along changes in the ribbon graph is a central aspect of their theory, see \Cref{lem:contr}, and this would not be possible for arbitrary sheaves of categories.

\subsection{Stability conditions and spaces of quadratic differentials}

To relate the space of stability conditions with a space of quadratic differentials, we follow the basic strategy of \cite{BS,BMQS}, but in the much more general setting of mixed-angulations and perverse schobers with an arc system kit.

The first thing to notice is that each quadratic differential $\varphi$ that is generic (i.e.\ without saddle trajectories) gives rise to a mixed-angulation whose edges consist of one member from each 1-parameter family of generic trajectories. It follows from the geometry of the horizontal foliation that each simple pole of $\varphi$ gives rise to a 1-gon, a zero of order $m$ gives rise to an $m+2$-gon, and an exponential singularity of the form $e^{z^{-n}}f(z)dz^2$ gives rise to $n$ distinct $\infty$-gons. The dual S-graph has as vertices the zeros and simple poles of $\varphi$ and edges the saddle connections which do not cross separating trajectories. Conversely, from a mixed-angulation or S-graph together with a choice of complex number in the upper half-plane for each edge one can construct a complex structure and quadratic differential. Moreover, the crossing of strata with a single saddle trajectory corresponds to a flip of the mixed-angulation.

We then use the correspondence between simple tilting of hearts and flips of S-graphs from \Cref{introthm:tilt}.iii), which allows to embed the skeleton (exchange graph) of the moduli space of quadratic differentials into the skeleton of the space of stability conditions. Using the $\CC$-actions on both spaces, one then extends such an embedding to all quadratic differentials, such that the image consists of connected components. This step involves non-trivial results about the surface topology of quadratic differentials, for which we will mostly refer to \cite[\S~4]{BMQS}. However, we consider a slightly more general setup than loc.~cit.~by allowing simple and double poles, as well as boundary singular points, corresponding to quadratic differentials with exponential singularities. Our main result is then the following (see \Cref{thm:app} in the main text).

\begin{theorem}\label{introthm:stab}
Suppose that $\sow$ is not closed with a single marked point and has a non-empty set of marked points. Let $\CS$ be the stable $\infty$-category generated by the simple-minded collection $\A_{\Sgh}$ associated with the edges of the S-graph $\Sgh$. There is a map
\be\label{eq:app*}
    \FQuad{S}{\sow} \to \Stab(\CS)
\ee
from the space of framed quadratic differentials on $\sow$ to the space of stability conditions, which is a biholomorphism onto a union of connected components.
\end{theorem}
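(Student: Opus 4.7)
The plan is to first define the map on the open locus of saddle-free framed quadratic differentials. For such a $\phi$, the horizontal foliation cuts $\sow$ into polygons, producing a mixed-angulation whose dual S-graph $\Sgh_\phi$ is singled out, up to isotopy with gradings on its edges, by the framing. The preceding discussion then yields a heart $\hua{H}_\phi\subset\CS$ with simples $\A_{\Sgh_\phi}$, and I would complete this to a stability condition by setting $Z_\phi([\A_e]) = \int_{\gamma_e}\sqrt{\phi}$ for each edge $e$ of $\Sgh_\phi$, where $\gamma_e$ is the distinguished class in the spectral cover associated with $e$. The saddle-free hypothesis ensures that the resulting phases lie in an admissible sector, so the stability-condition axioms hold.

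Next I would globalize. A codimension-one wall in $\FQuad{S}{\sow}$ separating two generic chambers corresponds to a single saddle trajectory appearing, which topologically realizes a flip $\Sgh_\phi\leadsto\Sgh^\sharp$. By \Cref{pp:tilt=flip} this flip matches a simple tilt of $\hua{H}_\phi$, so combined with Bridgeland's deformation theorem \cite{B1} the construction extends holomorphically across walls into neighbouring chambers. The commuting $\CC$-actions on $\FQuad{S}{\sow}$ (by $\phi\mapsto e^{2\mai\theta}\phi$) and on $\Stab(\CS)$ (by phase rotation) are compatible by construction, and since every $\phi$ can be rotated into the saddle-free locus, these together define the map on all of $\FQuad{S}{\sow}$.

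Finally I would verify that this map is a biholomorphism onto a union of connected components. Local biholomorphicity is immediate: Bridgeland's theorem makes the central-charge map a local homeomorphism, the period map is a local biholomorphism on the source, and the identification $Z_\phi([\A_e]) = \int_{\gamma_e}\sqrt{\phi}$ intertwines the two. Injectivity follows by reconstructing $\Sgh_\phi$ from the combinatorics of the simple-minded collection and its flip/tilt pattern, and then recovering $\phi$ from its periods. The crucial remaining point is that the image is closed, so that it is clopen and hence a union of connected components; this is the main obstacle, and I would adapt the surface-topological completeness arguments of \cite[App.~B]{BMQS}. The genuinely new input is the presence of boundary $\infty$-gons arising from exponential singularities, which requires verifying that limits along degenerations involving such singularities remain within the moduli of framed quadratic differentials and do not produce stability conditions escaping to the boundary of $\Stab(\CS)$.
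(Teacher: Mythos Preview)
Your overall strategy matches the paper's (which follows \cite{BS,BMQS}): define the map on the saddle-free locus $B_0$ via the S-graph/heart correspondence and periods, use \Cref{pp:tilt=flip} to cross codimension-one walls, exploit the compatible $\CC$-actions, and invoke \cite[App.~B]{BMQS} for the new surface-topological input. Two points are worth sharpening, however, because you have slightly misallocated where the difficulty lies.

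First, the extension step. You write that since every $\phi$ can be rotated into $B_0$, the $\CC$-equivariance ``defines the map on all of $\FQuad{S}{\sow}$.'' This gives a pointwise definition, but does not by itself establish continuity (or even well-definedness) at points of the deeper strata $B_p$, $p>2$: a $\CC$-orbit can pass through loci with several simultaneous saddle connections, and you must check that approaching such a point from different generic chambers yields the same stability condition. The paper handles this by an explicit induction extending from $B_p$ to $B_{p+1}$ (following \cite[Prop.~5.8]{BS}), and the key geometric input is the \emph{walls-have-ends} property: every stratum of $F_p$ for $p>2$ has a point with a connected neighbourhood meeting $B_{p-1}$, so that any path in $\FQuad{}{\sow}$ is homotopic rel endpoints to one in $B_2$. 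This is precisely what \cite[App.~B]{BMQS} supplies (see the paper's \Cref{prop:pathinBp}), and it is here---not in a closedness argument---that the exponential-singularity generalization enters.

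Second, the ``image is a union of connected components'' step. You frame this as an analytic closedness/completeness problem. The paper instead argues combinatorially: by \Cref{pp:tilt=flip}, the image $\EGb(\CS)$ of the exchange-graph injection $\EG_S(\sow)\hookrightarrow\EG(\CS)$ is closed under forward and backward simple tilts, hence consists of full connected components (\Cref{cor:iso}). The corresponding generic-finite components of $\Stab(\CS)$ are then exactly the image of the extended map. No limit argument along degenerations of quadratic differentials is needed.
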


\subsection{Examples}\label{introsec:examples}

In this paper, we consider two classes of examples of stable $\infty$-categories which arise from perverse schobers equipped with arc system kits:
\begin{enumerate}
    \item perverse schobers arising from $\infty$-categories of $\mathcal{D}(k)$-valued local systems on spherical fibrations between products of spheres, where $\mathcal{D}(k)$ is the derived category of chain complexes over the field $k$, in \Cref{subsec:sphericalfibrationstori},
    \item topological Fukaya categories of flat surfaces with infinite area in \Cref{subsec:topFukviaschobers}.
\end{enumerate}
\Cref{introthm:stab} applies to both of these examples. A class of more elaborate examples of perverse schobers with arc system kits, whose global sections describe the derived categories of relative graded Brauer graph algebras, is explored in the sequel paper \cite{CHQ24}.

The stable $\infty$-categories of global sections of the first of the two classes of perverse schobers generalize the derived categories of Ginzburg algebras associated with triangulated marked surfaces and more generally derived categories of relative Ginzburg algebras associated with $n$-angulated surfaces, see \cite{Chr22,Chr21b}, whose spaces of stability conditions have not previously been described. The stability conditions of the second class of examples were already identified in \cite{HKK17}, but we give a proof based on very different methods.

Before describing these two examples in more detail, we highlight the relation of our results to other works on quadratic differentials and stability conditions.

\begin{itemize}
    \item \cite{BS,KQ2}: We recover the case of quadratic differentials without double poles (i.e. with simple zeros and poles of order $>2$ only) as a special case of our first example. In the presence of double poles, our 3CY category differs from the one considered by Bridgeland--Smith (theirs is a deformation). The difference in the spaces of stability conditions is that there appear in our approach no quadratic differentials where a zero has collided with a double pole.
    \item \cite{Ike17}
    We also recover the case with order $N-2$ zeros and higher order poles as a special case of our first example, which generalizes the disk case in \cite{Ike17}.
    \item \cite{HKK17}: We recover the case of infinite area flat surfaces (equivalently: quadratic differentials with at least one pole of order $\geq 2$ or an exponential singularity). The case of finite area flat surfaces is beyond the scope of the tilting theory approach. For the same reason, we do not recover the results of \cite{Hai21}.
    \item \cite{BMQS}: This is possibly another special case for a suitable choice of perverse schober, but we do not consider this example here.
\end{itemize}

\subsubsection{Local systems on spherical fibrations between products of spheres}

Given a Kan complex $X$, we consider the $\infty$-category of $\mathcal{D}(k)$-valued local systems on $X$, which is defined as the $\infty$-categorical functor category $\on{Fun}(X,\mathcal{D}(k))$. A spherical fibration $f\colon X\to Y$ is a Kan fibration between Kan complexes whose fiber is homotopic to the $n$-sphere for some $n\geq 0$. If $f$ is a spherical fibration, the pullback functor $f^*\colon \on{Loc}(Y)\to \on{Loc}(X)$ is a spherical functor, see \cite{KS14,Chr20}, and thus describes the data encoding a perverse schober on the disc with a single singularity. The simplest spherical fibration $f\colon S^n\to \ast$ was shown in \cite{Chr22,Chr21b} to give rise to the perverse schobers describing relative Ginzburg algebras of $n$-angulated surfaces. To extend this construction to a mixed-angulated weighted marked surface $\sow$, we consider a perverse schober $\mathcal{F}_{\Sgh,\NUM}$, which, near each degree $m$ vertex, is described by the spherical functor arising from the spherical fibration
\[
\prod_{i\in\NUM} S^{i-1}\longrightarrow \prod_{i\in\NUM, i
\neq m} S^{i-1}\,.
\]
Here $\NUM\subset \mathbb{N}_{\geq 2}$ is a finite set containing all degrees of the interior singular points  of the given weighted marked surface $\sow$ (which is assumed to have no degree $1$ singular points). We show that $\mathcal{F}_{\Sgh,\NUM}$ admits an arc system kit, so that we obtain a description of the space of stability conditions of a proper subcategory of its stable $\infty$-category of global sections.

As shown by Smith \cite{Smi15}, the finite derived categories of Ginzburg algebras of triangulated surfaces embed fully faithfully into the Fukaya categories of $3$-dimensional exact symplectic manifolds with a Lefschetz fibration to the surface. Relative Ginzburg algebras of $n$-angulated surfaces behave similarly to partially wrapped Fukaya categories of $n$-dimensional exact symplectic manifolds with a Lefschetz fibration, see also \cite{Chr21b}. This is generalized by the behaviour of the $\infty$-category of global sections of $\mathcal{F}_{\Sgh,\NUM}$: it behaves is some ways like a partially wrapped Fukaya category of a $1+\prod_{i\in \NUM}(i-1)$ complex dimensional exact symplectic manifold with a Morse--Bott fibration to the surface $\sow$ with fiber $T^*(\prod_{i\in\NUM} S^{i-1})$.

The perverse schober $\mathcal{F}_{\Sgh,\NUM}$ can furthermore (in most cases) be constructed in such a way, that its $\infty$-category of global sections admits a relative left Calabi--Yau structure in the sense of Brav--Dyckerhoff \cite{BD19} of dimension $1+\prod_{i\in \NUM}(i-1)$, see \Cref{rem:CYstructuresforschobers}.

\subsubsection{Topological Fukaya categories of infinite area flat surfaces}

We can consider any weighted marked surface $\sow$ as a graded marked surface, which has an associated $\ZZ$-graded topological Fukaya category.
This topological Fukaya category was shown in \cite{DK15,HKK17} to arise as the global sections of a constructible cosheaf of dg-categories, defined on a spanning ribbon graph of the real blowup surface ${\bf S}^{\on{blw}}$ of $\sow$ at the singular points. The constructible cosheaf models a parametrized perverse schober $\hF$ (by first passing to derived $\infty$-categories and then to the right adjoint diagram, i.e.\ the dual sheaf of the cosheaf). This perverse schober has no singularities, meaning that the spherical adjunctions describing it at the vertices of $\rgraph$ are all trivial (given by $\mathcal{D}(k)\leftrightarrow 0$).

In \Cref{subsec:topFukviaschobers}, we show that the topological Fukaya category of $\sow$ arises as the global sections of a different, novel perverse schober $\hF_{\on{cut}}$ with non-trivial singularities, which is parametrized by an S-graph of $\sow$. The S-graph of $\sow$ does not itself define a spanning graph of ${\bf S}^{\on{blw}}$. However, we can choose a spanning graph that encloses each interior singular point by a loop, and upon removing these loops one obtains the S-graph. The value of the perverse schober $\hF_{\on{cut}}$ at a vertex formerly incident to a loop is given by the $\infty$-category of sections supported in a neighborhood of the loop.

The perverse schober $\hF_{\on{cut}}$ is shown to admit an arc system kit, which allows us to apply \Cref{introthm:stab} to the topological Fukaya category. This gives an alternative proof of the main result of \cite{HKK17} for flat surfaces of infinite area.

\subsection{Notation}
\begin{itemize}
\item Weighted marked surface $\sow=({\bf S},\M,\W,\wt,\nu)$
    with $\M$ the marked points, $\W$ the singular points, $\wt\colon\W\rightarrow \mathbb{N}_{\geq -1}\cup\{\infty\}$ the weight function, and $\nu$ the line field (\Cref{def:weightedmarkedsurf}).
\item Mixed-angulation $\AS$ of $\sow$ (\Cref{def:mixedangulation}), with dual graph an S-graph $\Sgh=\dAS$ (\Cref{def:sgraph}).
\item Forward flip $\AS^\sharp_\gamma$ of a mixed-angulation $\AS$ at an arc $\gamma\in\AS$ (\Cref{def:flip}).
\item Exchange graph $\EG(\sow)$ and exchange graph of S-graphs $\EG_S(\sow)$ of the weighted marked surface~$\sow$ (\Cref{subsec:WMS}).
\item Moduli space of framed quadratic differentials $\FQuad{}{\sow}$ (\Cref{def:fquad}).
\item The category $\Gamma(\rgraph,\hF)$ of global sections of the perverse schober $\hF$ parametrized by a ribbon graph $\rgraph$ (\Cref{def:sections}).
\item Extended ribbon graph $\eS$ of an S-graph $\Sgh$ (\Cref{def:extdgraph}).
\item Collection of objects $\ASG\subset \Gamma(\eS,\hF)$ corresponding to the edges of the S-graph (\Cref{sec:objfromarcs}), generating the stable subcategory $\CS$ with a canonical heart $\CSh$ (\Cref{subsec:flipsrevisited}).
\end{itemize}

Note that our convention of forward flip (moving endpoints clockwise) and morphism direction (counter-clockwise) are the inverse of the ones in \cite{Qiu16,KQ2,BMQS}.

Throughout this paper, $k$ denotes a fixed field.

\subsection{Acknowledgements}
We thank Nathan Broomhead, Raquel Coelho Simoes, David Pauksztello and Jon Woolf for pointing out a small mistake concerning a missing finiteness condition of the tilted heart in \Cref{pp:tilting} in an earlier version of the article and sharing with us their upcoming work~\cite{BCSPW}. We thank the anonymous referee for their careful reading and the suggestions to add more details.
M.C.~thanks Tobias Dyckerhoff for inspiring discussions about stability conditions on topological Fukaya categories of surfaces.
M.C.~further thanks the Hausdorff Research Institute for the hospitality during his stay, during which part of this work was written.

\section{Mixed-angulations on weighted marked surfaces}

In this section we define weighted marked surfaces, their mixed-angulations, the dual notion of an S-graph, as well as flips of both. We also discuss basic properties of quadratic differentials and their moduli spaces, and how horizontal foliations give rise to mixed-angulations.

\subsection{Grading of surfaces and curves}\label{subsec_grading}

We recall the definitions of grading for surfaces and curves on them, following~\cite{HKK17}.
This is the lowest-dimensional case of the more general notions of grading for symplectic manifolds and their Lagrangian submanifolds~\cite{seidel_grading}.

\begin{definition}
    Let $\bf S$ be a smooth, oriented surface. A \emph{grading structure} on $\bf S$ is a foliation $\nu$, i.e.\ a section of the projectivized tangent bundle $\mathbb P(T\bf S)$.
    A \emph{graded surface} is a surface together with a choice of grading structure.
    A \emph{morphism of graded surfaces} $({\bf S}_1,\nu_1)\to ({\bf S}_2,\nu_2)$ is a pair $(f,h)$ where $f\colon S_1\to S_2$ is an orientation preserving local diffeomorphism and $h$ is a homotopy class of paths from $f^*\nu_2$ to $\nu_1$ in the space of sections $\Gamma({\bf S}_1;\mathbb P(T{\bf S}_1))$.
\end{definition}

For us, grading structures arise as horizontal foliations of quadratic differentials.
This foliation, $\mathrm{hor}(\varphi)$, is defined on the complement of the zeros and poles of a quadratic differential $\varphi$ on a Riemann surface $\bf S$ by
\[
\mathrm{hor}(\varphi)(p)\coloneqq\{v\in T_p{\bf S}\mid \varphi(v,v)\in\mathbb R_{\geq 0}\}\qquad \in\mathbb P( T_p {\bf S}).
\]
If $({\bf S},\nu)$ is a graded surface, then any immersed loop $c\colon S^1\to \bf S$ has a \textit{Maslov index}, $\mathrm{ind}_\nu(c)$, which records the number of times (more precisely an intersection number) that $\nu$ becomes tangent to $c$.
The sign is normalized so that if $c$ is a counter-clockwise loop around the origin in $\CC$ and $\nu$ is the horizontal foliation of the quadratic differential $z^kdz^2$, then $\mathrm{ind}_\nu(c)=k+2$.
If $\nu$ is defined in a punctured neighborhood of a point $p\in\bf S$, then define $\mathrm{ind}_\nu(p)\coloneqq \mathrm{ind}_\nu(c)$, where $c$ is a small counter-clockwise loop around $p$.

\begin{definition}\label{def:gradedcurve}
    A \emph{graded curve} in a graded surface $(\bf S,\nu)$ is an immersed curve $c\colon I\to \bf S$ together with a homotopy class of paths, $\tilde{c}$, in $\Gamma(I;c^*\mathbb P(T\bf S))$ from $c^*\nu$ to $\dot{c}$ (i.e.\ from the line field given by the foliation to the line field given by the tangents to $c$).
\end{definition}

We note that if $I$ is an interval, then there is a $\mathbb Z$-torsor of choices of $\tilde{c}$ for given $c$.
If $I=S^1$, then $\tilde{c}$ exists if and only if $\mathrm{ind}_\nu(c)=0$.
To a pair of graded curves $(I_1,c_1,\tilde{c}_1)$, $(I_2,c_2,\tilde{c}_2)$ intersecting transversely in a point $p=c_1(t_1)=c_2(t_2)$ one can assign an integer $i_p(c_1,c_2)$ as the class in $\pi_1(\mathbb P(T_p{\bf S}))=\mathbb Z$ given by $\tilde{c}_1(t_1)\cdot\kappa\cdot\tilde{c}_2(t_2)^{-1}$, where $\kappa$ going from $\dot{c}_1(t_1)$ to $\dot{c}_2(t_2)$ is given by counterclockwise rotation in $T_p\bf S$ by an angle less than $\pi$.

We will also require a definition of the intersection index $i_p(c_1,c_2)$ in the case where two graded curves $(I_1,c_1,\tilde{c}_1)$, $(I_2,c_2,\tilde{c}_2)$ intersect at a common endpoint $p\in\bf S$ at which $\nu$ is undefined, but rather defined on a small punctured disk centered at $p$.
To do this, choose a small arc $\alpha\colon[0,1]\to \bf S$ starting on $c_1$ and ending on $c_2$ and going counterclockwise around $p$ and choose an arbitrary grading of $\alpha$. Then $i_p(c_1,c_2)\coloneqq i_{\alpha(0)}(c_1,\alpha)-i_{\alpha(1)}(c_2,\alpha)$ is independent of the choice of the graded curve $\alpha$, c.f.\ \cite[Equation (3.17)]{HKK17}.

\subsection{Weighted marked surfaces and mixed-angulations}\label{subsec:WMS}

A mixed-angulation is a collection of polygons with possibly varying numbers of edges and an identification of certain pairs of edges.
We will consider sets of mixed-angulations, related by a sequence of flips, on the same underlying surface. To set this up consistently, we need a notion of marked surface.

\begin{definition}\label{def:weightedmarkedsurf}
A \emph{weighted marked surface} is a quintuple $\sow=({\bf S},\M,\W,\wt,\nu)$  where
\begin{itemize}
\item $\bf S$ is a compact oriented surface, possibly with boundary,
\item $\M\subset {\bf S}$ is a non-empty subset of \textit{marked points} (vertices),
\item $\W\subset {\bf S}$ is a finite subset of \textit{singular points} (centers of polygons),
\item $\wt\colon \W\to \ZZ_{\geq -1}\cup \{\infty\}$ is the \textit{weight function},
\item $\nu$ is a grading structure on ${\bf S}\setminus (\M\cup \W)$.
\end{itemize}
such that
\begin{enumerate}
\item $\wt(x)=\infty$ if and only if $x\in\partial \bf S$,
\item $\M\cap \W=\emptyset$,
\item the interior of $\bf S$ contains only finitely many points of $\M$,
\item each component of $\partial \bf S$ contains at least one point of $\M$,
\item $\M$ is discrete in ${\bf S}\setminus \W$, and any non-compact component of $\partial {\bf S}\setminus \W$ contains countably infinitely many points of $\M$,
\item the index of any $x\in\W\cap \mathrm{int}(\bf S)$ with respect to $\nu$, $\mathrm{ind}_\nu(x)$, is equal to $\wt(x)+2$, and in particular finite.
\end{enumerate}
For simplicity, we also assume that $\bf S$ is connected.
An \emph{isomorphism} of weighted marked surfaces $\sow\to\sow'$ is an isomorphism of graded surfaces $(f,h)\colon ({\bf S},\nu)\to ({\bf S}',\nu')$ with $f(\M)=\M'$, $f(\Delta)=\Delta'$, and $\wt'\circ f=\wt$.
\end{definition}
We define $d(x)\coloneqq \wt(x)+2$ and refer to this as the \textit{degree} of $x\in\W$.
\begin{definition}\label{def:arc}
A \emph{compact arc} in a weighted marked surface $\sow$ is a curve $\gamma\colon [0,1]\to\surf$ connecting points in $\W$, i.e.\ $\gamma(\partial I)\subset\W$, that is disjoint from $\W\cup \M$ away from its endpoints.
We require that $\gamma$ be embedded except possibly at the endpoints, which may coincide.
If the endpoints of $\gamma$ coincide, we require $\gamma$ to not be homotopic in ${\bf S}\setminus\M$, relative to its endpoints, to the constant loop.
A \emph{non-compact arc} is a curve that connects points in $\M$ and is away from its endpoints disjoint from $\W\cup \M$.
A \emph{boundary arc} is a non-compact arc which cuts out a connected component of ${\bf S}$ containing one boundary singular point and no other singular points.
A \emph{closed curve} is a curve $\gamma\colon S^1\to\surf^\circ\setminus(\M\cup \W)$.
\end{definition}

\begin{definition}\label{def:mixedangulation}
    A \emph{mixed-angulation} of a weighted marked surface $\sow$ is a (finite) collection, $\mathbb A$, of non-compact graded arcs in ${\bf S}\setminus\W$ such that
    \begin{enumerate}
        \item endpoints of arcs lie in $\M$,
        \item arcs intersect only in endpoints,
        \item the set of arcs cuts $\bf S$ into polygons with vertices in $\M$, called $\AS$-polygons, and edges which are arcs in $\mathbb A$ and/or boundary arcs, so that each $n$-gon contains exactly one singular point $x\in\W$ with $d(x)=n$,
        \item if $X$,$Y$ are two consecutive edges of a polygon in clockwise order, then $i(X,Y)=0$ (for the edges of S-graphs, see below, the order reverses to counter-clockwise).  Here we use the grading on arcs and have chosen a grading on boundary arcs.
    \end{enumerate}
    We refer to the arcs in $\mathbb A$ as \textit{edges} of the mixed-angulation. The boundary arcs are the \textit{boundary edges} and the others are the \textit{internal edges}.
\end{definition}

The notion of a mixed-angulated surface is dual to that of an \textit{S-graph} in the sense of~\cite{HKK17}.

\begin{definition}
A \emph{graph} $\rgraph$ consists of two finite sets $\rgraph_0$ of \emph{vertices} and $\on{H}$ of \emph{halfedges} together with an involution $\tau\colon \on{H}\rightarrow \on{H}$ and a map $\sigma\colon \on{H}\rightarrow \rgraph_0$. We call the $\tau$-orbits the \emph{edges} of $\rgraph$ and denote their set by $\rgraph_1$. An edge is called \emph{internal} if the orbit contains two elements and is called \emph{external} if the orbit contains a single element. An internal edge is called a \emph{loop} at $v\in \rgraph_0$ if it consists of two halfedges both being mapped under $\sigma$ to $v$. The set of internal edges of $\rgraph$ is denoted by $\rgraph_1^\circ$ and the set of external edges is denoted by $\rgraph_1^\partial$.
\end{definition}

\begin{definition}\label{def:sgraph}
    An \emph{S-graph} is a graph $\Sgh$ with, for each vertex $v$, either a cyclic or total order on the set of halfedges meeting $v$ and a positive integer $d(h_1,h_2)$ for each pair $h_1,h_2$ of successive halfedges.
    We call those vertices of $\Sgh$ with cyclic order (resp.\ total order) on the set of halfedges meeting it \textit{internal vertices} (resp.\ \textit{boundary vertices}).
\end{definition}

Given the data of $d(h_1,h_2)$ for consecutive halfedges, we define $d(h_1,h_2)$ for other pairs $h_1$, $h_2$ of halfedges attached to the same vertex $v$ so that $d(h_1,h_2)=0$ if $h_1=h_2$ and $d(h_1,h_3)=d(h_1,h_2)+d(h_2,h_3)$ if the sequence $(h_1,h_2,h_3)$ is compatible with the cyclic/total order at $v$.
If $v$ is an internal vertex, then this determines $d(h_1,h_2)$ for all pairs $(h_1,h_2)$, and if $v$ is a boundary vertex then this determines $d(h_1,h_2)$ for all pairs with $h_1\leq h_2$.

\begin{definition}
    The \emph{dual graph} $\Sgh=\dAS$ of a mixed-angulated surface $(\sow,\mathbb A)$ is the S-graph obtained by choosing embedded intervals (edges of the graph) in $\bf S$ with endpoints in $\W$, intersecting only in endpoints, so that each internal edge of the mixed-angulation corresponds to a unique edge of the S-graph in the sense that the two intersect in exactly one point in the interior.
    The cyclic/total order on the set of halfedges meeting a given vertex is induced from the embedding of the graph in $\bf S$. (By convention, we go around a vertex counter-clockwise.)
    Finally, for each pair of consecutive halfedges $(h_1,h_2)$, we let $d(h_1,h_2)-1$ be the number of boundary edges between the interior edges corresponding to $h_1$ and $h_2$.
\end{definition}

The edges of the dual S-graph have a canonical grading such that $i(X,Y)=0$ if $X\in\mathbb A$ and $Y$ is the dual edge of the S-graph. Then $d(h_1,h_2)$ is just the intersection index of the two corresponding edges of the S-graph at the given vertex.

\begin{figure}[ht]\centering
\makebox[\textwidth][c]{
\begin{tikzpicture}[scale=.6]
\draw[very thick](0,0) circle (5);
\coordinate (z0) at (0,2);
\coordinate (z1) at (0,-2);
\coordinate (z2) at (2,-2);
\coordinate (z3) at (2,2);
\coordinate (z4) at (-2.5,0);
\coordinate (z5) at (-5,0);
\coordinate (z6) at (-45:5);
\coordinate (z7) at (45:5);
\coordinate (z8) at (100:5);
\coordinate (z9) at (140:5);
\coordinate (z10) at (-100:5);
\coordinate (z11) at (-22:5);
\coordinate (z12) at (-10:5);
\coordinate (z13) at (-5:5);
\coordinate (z14) at (-2.5:5);
\coordinate (z15) at (2.5:5);
\coordinate (z16) at (5:5);
\coordinate (z17) at (10:5);
\coordinate (z18) at (22:5);
\draw[blue,thick] (z0) -- (z1) -- (z2) -- (z3) -- (z0) -- (z4) -- (z1);
\draw[blue,thick] (z4) -- (z5);
\draw[blue,thick] (z2) -- (z6);
\draw[blue,thick] (z3) -- (z7);
\draw[blue,thick] (z0) -- (z8);
\foreach \j in {0,...,18}{\draw(z\j)\nn;}
\coordinate (v0) at (1,0);
\coordinate (v1) at (-1,0);
\coordinate (v2) at (-100:4);
\coordinate (v3) at (5,0);
\coordinate (v4) at (1,3);
\coordinate (v5) at (140:3.5);
\draw[red] (v0) -- (v1);
\draw[red] (v1) edge[bend right=25] (v2);
\draw[red] (v2) edge[bend right=30] (v3);
\draw[red] (v3) -- (v0) -- (v4);
\draw[red] (v4) edge[bend right=25] (v5);
\draw[red] (v5) -- (v1);
\draw[red] (v0) edge[bend left=15] (v2);
\draw[red] (v3) edge[bend right=20] (v4);
\draw[red] (v2) edge[bend left=45] (v5);
\draw[red](v0)\ww node[above left]{$\scriptstyle{1}$} node[above right]{$\scriptstyle{1}$} node[below left]{$\scriptstyle{1}$} node[below right]{$\scriptstyle{1}$};
\draw[red](v1)\ww node[left]{$\scriptstyle{1}$} node[above right]{$\scriptstyle{1}$} node[below right]{$\scriptstyle{1}$};
\draw[red](v2)\ww node[above]{$\scriptstyle{1}$} node[above right]{$\scriptstyle{\;1}$} node[above left]{$\scriptstyle{1}$} node[below]{$\scriptstyle{2}$};
\draw[red](v3)\ww node[below left]{$\scriptstyle{1\;}$} node[above left]{$\scriptstyle{1\;}$};
\draw[red](v4)\ww node[above right]{$\scriptstyle{3}$} node[below left]{$\scriptstyle{1}$} node[below right]{$\scriptstyle{1}$};
\draw[red](v5)\ww node[left]{$\scriptstyle{3}$} node[right]{$\scriptstyle{1}$} node[below]{$\scriptstyle{1}$};
\end{tikzpicture}}
\caption{Example of mixed-angulated surface and dual S-graph. Numbers $d(h_1,h_2)$ for consecutive halfedges $h_1$, $h_2$ are shown.}
\label{fig:mixangulation}
\end{figure}

Next, we introduce the flips of mixed-angulations.
We say an arc in a mixed-angulation is a monogon arc if it encloses a monogon.
\begin{definition}\label{def:flip}
The \emph{forward flip} $\AS^\sharp_\gamma$ of a mixed-angulation $\AS$ at an arc $\gamma\in\AS$ is an operation that moves the endpoints of $\gamma$ clockwise along the adjacent sides of the $\AS$-gons containing $\gamma$, as illustrated in Figure~\ref{fig:flip1}. A special case of the flip is the monogon case: if $\gamma$ is a monogon arc, we will move both its endpoints along the adjacent side of the other $\AS$-gon containing $\gamma$, as illustrated in Figure~\ref{fig:flip2}.
Then we obtain a new mixed-angulation $\AS^\sharp$ by
replacing $\gamma$ with $\gamma^\sharp$.
The inverse of a forward flip is a \emph{backward flip}, $\AS^\flat_\gamma$.
Note that $\gamma^\sharp$ inherits the grading from $\gamma$
when grading is considered.

The \emph{exchange graph $\EG(\sow)$ of the weighted marked surface~$\sow$} is the directed graph whose vertices are mixed-angulations and whose oriented edges are forward flips between them.
\end{definition}

We will refer to the two cases of flips, as in Figure~\ref{fig:flip1} and Figure~\ref{fig:flip2}, as the usual and the monogon case, respectively.
\begin{remark}\label{rem:f.f.}
The forward flip operation on an S-graph
\[\Sgh=\dAS\to\Sgh^\sharp_{e}=(\AS^\sharp_\gamma)^*\] 
can concretely be described as follows, where ${e}$ in $\Sgh$ is the dual edge of $\gamma$ in $\AS$.
The arc ${e}$ remains in $\Sgh^\sharp_{e}$ but its grading will be shifted downward by one.
Moreover,
in the usual case, the weights of both endpoints of ${e}$ are bigger than 1;
in the monogon case, the degree of exactly one endpoint of ${e}$ is 1.
For $h\neq{e}\in\Sgh$, we flip it to $h'$ as follows:
\begin{itemize}
  \item In the usual case,
  if there is an oriented intersection $x$ of index 1 from ${e}$ to $h$ at a singular point $Z$,
  then move the endpoint $Z$ of $h$ along ${e}$ with respect to $Z$.
  See \Cref{fig:flip1} for $h=a,b$.
  \item In the monogon case,
  if there is an oriented intersection $x$ of index 1 from ${e}$ to $\alpha$ at a singular point $Z$,
  then apply the square of the braid twist $B_{e}$ along ${e}$ (cf. \cite[Def.~3.10]{KQ2}) as shown in \Cref{fig:flip2} to $h$ and then $h'=B_{e}^2(h)$.
\end{itemize}

The \emph{exchange graph $\EG_S(\sow)$ of S-graphs of $\sow$} is the directed graph whose vertices are S-graphs and whose oriented edges are forward flips between them.
\end{remark}

\begin{figure}[ht]\centering
\makebox[\textwidth][c]{
\begin{tikzpicture}[xscale=-.5,yscale=.5]
\draw[dashed](0,0) circle (5.2)
    (30:1)node{$e$} (142:2)node{$_a$} (0:4)node{$_b$};
\foreach \j in {0,...,7}{\draw (90+45*\j:5) coordinate  (w\j) edge[blue,thick] (45+45*\j:5) coordinate  (u\j);}
\path ($(w0)!.5!(w2)$) coordinate (v1)
      ($(w0)!.5!(w5)$) coordinate (v2);
\draw[blue,thick](w0)to(w2);
\draw[cyan, very thick](w5)to(w0);

\foreach \j in {0,...,7}{\draw[red](22.5+45*\j:5.2) coordinate (z\j);}

\draw[red](90-22.5*3:3)edge(z0)edge(z1)edge(z7);
\draw[red](90-45*5:1)edge(z6)edge(z5)edge(z4);
\draw[red](90+45:4.2)edge(z2)edge(z3);
\draw[red](90-22.5*3:3)\ww node[below]{\small{$4$}} to
    (90-45*5:1)\ww node[above]{\small{$5$}}node[right,black]{$^Z$} to (90+45:4.2)\ww  node[left]{\tiny{$3$}};

\foreach \j in {0,...,7}{\draw[red,fill=white](z\j) circle(.08);}

\draw(-6.5,0)node{\Huge{$\rightsquigarrow$}};
\begin{scope}[shift={(-13,0)}]
\draw[dashed](0,0) circle (5.2)
    (30:1)node{$e$} (-40:2.5)node{$_{b'}$} (90:3.7)node{$_{a'}$};
\foreach \j in {0,...,7}{\draw (90+45*\j:5) coordinate  (w\j) edge[blue,thick] (45+45*\j:5) coordinate  (u\j);}
\path ($(w0)!.5!(w2)$) coordinate (v1)
      ($(w0)!.5!(w5)$) coordinate (v2);
\draw[cyan,very thick]
    (w2).. controls +(38:5) and +(120:4) ..($(w0)!.5!(w5)$)
        .. controls +(-60:2) and +(-125:2) ..(w6);
\draw[blue,thick](w0)to(w2);

\foreach \j in {0,...,7}{\draw[red](22.5+45*\j:5.2) coordinate (q\j);}
\draw[red](90-22.5*3:3)edge(q0)edge(q1);
\draw[red](90-45*5:1)edge(q6)edge(q5)edge(q4)edge(q7);
\draw[red](90+45:4.2)edge(q2)edge(q3);

\draw[red]  (90+45:4.2)\ww  node[below]{\tiny{$3$}} to [bend left=45]
    (90-22.5*3:3)\ww node[below]{\small{$4$}} to [bend left=0]
    (90-45*5:1)\ww node[above]{\small{$5$}}node[right,black]{$^Z$} ;

\foreach \j in {1,...,8}{\draw[red,fill=white](22.5+45*\j:5.2)circle(.08);}
\end{scope}
\end{tikzpicture}}
\caption{The forward flip at a usual arc.}
\label{fig:flip1}
\end{figure}

\begin{figure}[hb]\centering
\makebox[\textwidth][c]{
\begin{tikzpicture}[xscale=-.8,yscale=.8,arrow/.style={->,>=stealth}]
\draw[dashed](0,-.2) circle (4);
  \draw[cyan,very thick](-150:3).. controls +(35:3) and +(0:5) ..(-150:3);
  \draw[red,thick](0,1)to[bend left=45]
    node[left,black,font=\normalsize]{${h}$} (0,-4)\ww  (-3,2)\ww to (0,1)\ww to(3,2)\ww;
  \draw[red,thick,font=\scriptsize](0,-1)\ww node[right]{$1$}
    to node[right,black,font=\normalsize]{${e}$} (0,1)\ww node[above]{$4$};
  \draw[blue,thick](90:3) to[bend left=45] (-30:3)\nn
    to[bend left=45]  (-150:3)\nn to[bend left=45] (90:3)\nn;
\draw(-5,0)node{\Huge{$\rightsquigarrow$}};
\begin{scope}[xscale=-1,shift={(10,0)}]
\draw[dashed](0,-.2) circle (4);
  \draw[cyan,very thick](-150:3).. controls +(35:3) and +(0:5) ..(-150:3);
  \draw[red,thick](0,1)to[bend left=45]
    node[right,black]{$^{B_e^2(h)}$} (0,-4)\ww  (-3,2)\ww to (0,1)\ww to(3,2)\ww;
  \draw[red,thick,font=\scriptsize](0,-1)\ww node[left]{$1$}
    to node[left,black,font=\normalsize]{${e}$}(0,1)\ww node[above]{$4$};
  \draw[blue,thick](90:3) to[bend left=45] (-30:3)\nn to[bend left=45] (-150:3)\nn to[bend left=45] (90:3)\nn;
\end{scope}
\end{tikzpicture}
}
\caption{The forward flip at a monogon arc.}
\label{fig:flip2}
\end{figure}

\subsection{Quadratic differentials}\label{subsec_quad}

Let $C$ be a compact Riemann surface and $\varphi$ a quadratic differential which is holomorphic and non-vanishing away from a finite subset $D\subset C$.
Each point in $D$ should be either a zero, a pole, a regular point, or a transcendental singularity of $\varphi$ of the form $\exp(f(z))g(z)dz^2$ where $f$ and $g$ are meromorphic and $f$ has a pole of order $n$ at $z=0$, called an \textit{exponential singularity of order $n$}.
Thus, $D$ consists of the zeros, poles, and exponential singularities of $\varphi$, as well as possibly some additional marked points where $\varphi$ is holomorphic and non-vanishing.

Besides the complex-analytic point of view on quadratic differentials, there is also a metric point of view.
Indeed, $|\varphi|$ is a flat Riemannian metric on $C\setminus D$, i.e.\ $C\setminus D$ has the structure of a metric space which is locally isometric to the Euclidean plane.
However, $C\setminus D$ is in general not complete as a metric space and the metric completion, $\overline{C\setminus D}$, adds a finite number of \textit{conical points}.
More precisely, $\overline{C\setminus D}$ has a conical point with cone angle $(n+2)\pi$ for each zero of order $n$ of $\varphi$, a conical point of cone angle $\pi$ for each simple pole of $\varphi$, and $n$ infinite-angle conical points for each exponential singularity of order $n$, and a (non-singular) point with cone angle $2\pi$ for each regular point of $\varphi$ in $D$.
The metric is already complete near higher order poles of $\varphi$, so these do not lead to any additional conical points.
Proofs of these claims may be found in~\cite{HKK17}.

A flat surface has a total area which may be finite or infinite.
In fact, the area of $\varphi$ is finite if and only if $\varphi$ has no higher order poles and no exponential singularities.
The finite area case is interesting from the point of view of ergodic theory and has been much studied.
Quadratic differentials $\varphi$ with infinite area, on the other hand, are closely related to mixed-angulations. This was already noted in~\cite{HKK17} in terms of the dual language of S-graphs.
Let us recall this relation.
We need to assume also that $|\varphi|$ has at least one conical point, i.e.\ exclude the flat plane and flat cylinder.
For generic choice of $\theta\in\mathbb R$, the horizontal foliation of $e^{i\theta}\varphi$ then has the structure of a \textit{horizontal strip decomposition}.
This means the following: After possibly replacing $\varphi$ by $e^{i\theta}\varphi$, we can assume that the horizontal foliation of $\varphi$, $\mathrm{hor}(\varphi)$, has no finite-length leaves.
Then there are two types of leaves, or trajectories:
\begin{enumerate}
    \item \emph{Separating trajectories}: Approach a conical point in one direction and escape ``to infinity'' (towards a higher order pole or exponential singularity) in the other direction.
    \item \emph{Generic trajectories}: Avoid the conical points and go to infinity in both directions.
\end{enumerate}
Each separating trajectory belongs to some conical point, by definition, and a conical point with cone angle $n\pi$ gives rise to $n$ separating trajectories.
These separating trajectories cut the surface into \textit{horizontal strips}, foliated by one-parameter families of generic trajectories.
Horizontal strips are isometric to $(0,a)\times\mathbb R$ for some $a\in (0,\infty]$, their \textit{height}.
Finite (resp.\ infinite) height horizontal strips have four (resp.\ two) separating trajectories on their boundary, some of which are possibly identified.
Choosing a generic trajectory from each horizontal strip gives a mixed-angulation whose edges are these horizontal trajectories. Each conical point of cone angle $n\pi$ corresponds to an $n$-gon.
Edges corresponding to infinite-height horizontal strips live on the boundary of the mixed-angulation, i.e.\ belong to just one polygon.
\begin{figure}
    \centering
    \begin{tikzpicture}
        \foreach \n in {0,...,4}
        {
            \coordinate (v) at (360/5*\n:3);
            \coordinate (w) at (360/5*\n+360/5:3);
            \foreach \m in {0,.5,1,1.5,2,2.8}
            {
                \coordinate (u) at (360/5*\n+36:\m);
                \draw[gray] (v) .. controls (u) and (u) .. (w);
            }
            \coordinate (u) at (360/5*\n+36:3.3);
            \draw[gray,dotted] (v) .. controls (u) and (u) .. (w);
            \draw[gray] (0,0) -- (v);
            \draw (v) \nn;
            \draw[blue,thick] (v) -- (w);
            \draw[thick,red] (0,0) -- (360/5*\n+36:3);
            \draw[thick,red,dotted] (360/5*\n+36:3) -- (360/5*\n+36:3.6);
        }
        \draw[red] (0,0) \ww;
    \end{tikzpicture}
    \caption{A 5-gon around a triple zero of a quadratic differential. Parts of horizontal foliation (gray lines), mixed-angulation (blue lines), dual S-graph (red lines) are shown.}
    \label{fig:polygon}
\end{figure}

The dual S-graph of the mixed-angulation is obtained as the union of the finite-length geodesics which connect the conical points on opposite boundaries of the strip (which can be two different or the same conical point). The quadratic differential is then entirely encoded in terms of the S-graph and a positive number $Z(e)$ in the upper half-plane for each edge, which is the vector between the conical points on the boundary of the horizontal strip $e$.

One issue with the above construction of a mixed-angulation from a quadratic differential is that the mixed-angulation lives on the compact surface $C$ which we cannot take to be the weighted marked surface $\sow$ in the case where $\varphi$ has poles of order $>2$ or exponential singularities.
To correct this, we define $\sow(C,D,\varphi)=({\bf S},\M,\W,\wt,\nu)$ associated with $(C,D,\varphi)$ as follows:
\begin{itemize}
    \item $\bf S$ is the oriented real blow-up of $C$ in the poles of order $\geq 3$ and exponential singularities,
    \item $\M\cap\mathrm{int}(\bf S)$ is the set of second order poles,
    \item $\M\cap \partial \bf S$ is the set of directions from which horizontal trajectories approach (there are $n$ such directions for a pole of order $n+2$ and $n$ different $\mathbb Z$-torsors worth for an exponential singularity of order $n$),
    \item $\W$ is the set of conical points (simple poles, marked points, zeros),
    \item $(\wt(x)+2)\pi=d(x)\pi$ is the cone angle of the conical point,
    \item $\nu$ is the horizontal foliation of $\varphi$.
\end{itemize}

We consider certain moduli spaces of quadratic differentials which give rise to a given weighted marked surface.
These are special cases of the moduli spaces $\mathcal M(S)$ considered in~\cite{HKK17}, as here we restrict to flat surfaces of infinite area.

\begin{definition}\label{def:fquad}
Let $\sow$ be a weighted marked surface.
Define the \emph{moduli space} $\FQuad{}{\sow}$ as follows.
A point in $\FQuad{}{\sow}$ is represented by a triple $(C,D,\varphi)$ as above together with an isomorphism of weighted marked surfaces $f\colon \sow\to\sow(C,D,\varphi)$.
Two points $(C,D,\varphi,f)$ and $(C',D',\varphi,f')$ are equivalent if there is an isomorphism of weighted marked surfaces $g\colon \sow(C,D,\varphi)\to\sow(C',D',\varphi')$ so that $(f')^{-1}gf$ is isotopic to the identity (as an isomorphism of weighted marked surfaces).
We give $\FQuad{}{\sow}$ the topology of pointwise convergence.
\end{definition}

There is a well-known affine structure on $\FQuad{}{\sow}$ given by the \emph{period map}. This is defined as follows.
First, let $\Sigma\to {\bf S}\setminus (\M\cup \Delta)$ be the double cover whose fiber over $p$ is the two-element set of orientations of the line $\nu(p)\subset T_p\bf S$.
(If $\nu$ is the horizontal foliation of a quadratic differential $\varphi$, then this is just the covering where $\sqrt{\varphi}$ becomes well-defined.)
Let
\[
\Hone(\sow)\coloneqq H_1\left({\bf S}\setminus \M,\W;\mathbb Z\times_{\mathbb Z/2}\Sigma\right)
\]
where $\mathbb Z\times_{\mathbb Z/2}\Sigma$ is the local system of abelian groups whose fiber over $p$ is the abelian group generated by $\Sigma_p=\{o_1,o_2\}$ with relation $o_1=-o_2$, thus non-canonically isomorphic to $\mathbb Z$.
Note that an isomorphism of weighted marked surfaces $\sow\to \sow'$ induces an isomorphism of abelian groups $\Hone(\sow)\to \Hone(\sow')$.
Given a triple $(C,D,\phi)$ there is a map
\be\label{eq:periodmap}
    \int\colon\Hone(\sow(C,D,\phi))\to \mathbb C,\qquad \gamma\mapsto \int_{\gamma}\sqrt{\phi}
\ee
so if $(C,D,\phi,f)\in\FQuad{}{\sow}$ then we get a map $\Hone(\sow)\to\mathbb C$.
These combine to give a continuous map
\begin{gather}\label{eq:period}
    \Pi\colon\FQuad{}{\sow}\longrightarrow\mathrm{Hom}\left(\Hone(\sow),\mathbb C\right)
\end{gather}
which turns out to be a local homeomorphism (see~\cite[Thm.~2.1]{HKK17}, which also includes the case of exponential singularities).
The above map may be used to define an affine structure on $\FQuad{}{\sow}$ by pulling back the one on $\mathrm{Hom}\left(\Hone(\sow),\mathbb C\right)\cong \mathbb C^k$.

\section{Hearts and tilting}\label{sec:heartstilting}

This section contains definitions and preliminaries on hearts of t-structures and their tilting theory.
As a slight generalization of existing results, we compute the tilts of finite hearts with respect to simple objects which are not necessarily rigid.
They reader may also skip this section for now and return it while reading \Cref{subsec:flipsrevisited}.

\subsection{Preliminaries}\label{app:preliminaries}
We first recall some basic definitions around subcategories of additive/abelian/triangulated categories.

In an additive category $\C$ with a subcategory $\hua{B}$,
we denote by
\[
    \hua{B}^{\perp_{\C}}\coloneqq \left\{C\in\C:\Hom_{\C}(B,C)=0\ \forall B\in \hua{B}\right\}.
\]
We may write $\hua{B}^{\perp}$ when $\C$ is implied. The subcategory ${}^{\perp_{\C}}\hua{B}$ is defined similarly.

For subcategories $\hT,\hF$ of an abelian category $\h$,
denote by
$$
    \hT*\hF\coloneqq \{M\in\h\mid\text{$\exists$ s.e.s.
        $0\to T\to M\to F\to0$ s.t. $T\in\hT, F\in\hF$} \}.
$$
If additionally $\Hom(\hT,\hF)=0$, then we write $\hT\perp\hF$ for $\hT*\hF$.
For a subcategory $\C$ of $\h$,
denote by $\<\C\>$ the full subcategory of $\h$ consisting of objects that admit a filtration with factors in $\C$.

A map $f\colon E\to F$ in an arbitrary additive category is called \emph{right minimal} if
it does not have a direct summand of the form $E'\to0$.
Similarly for \emph{left minimal}.

Let $\C$ be an additive category with a full subcategory $\hX$.
A \emph{right $\hX$-approximation} of an object $C$ in $\C$ is a morphism $f\colon X\to C$ with $X\in\hX$,
such that $\Hom(X',f)$ is an epimorphism for any $X'\in \hX$.
Dually, a \emph{left $\hX$-approximation} of an object $C$ in $\C$ is a morphism $g\colon C\to X$ with $X\in\hX$,
such that $\Hom(g,X')$ is an epimorphism for any $X'\in\hX$.
A \emph{minimal right/left $\hX$-approximation} is a right $\hX$-approximation that is also right/left minimal.
We say $\hX$ is \emph{contravariantly finite} (resp.\ \emph{covariantly finite}) if every object in $\C$ admits a right (resp.\ left) $\hX$-approximation.
It is \emph{functorially finite} if it is both contravariantly finite and covariantly finite.

A \emph{torsion pair} in an abelian category $\h$ consists of two additive subcategories $\hT$ and $\hF$ such that $\h=\hT\perp\hF$.
Denote by $\Sim\h$ the set of simples of $\h$. An abelian category $\h$ is called a \emph{length category} if any object in $\h$ admits a finite filtration with factors in the set of simples $\Sim\h$. The abelian category $\h$ is called $\emph{finite}$ if it is a length category with a finite number of simples.

Similarly, we have the corresponding notions and notation in a triangulated category $\D$, replacing s.e.s.\ by triangles.
A \emph{t-structure} $\hP$ of $\D$ is the torsion part of some torsion pair of $\D$ which is closed under shift (i.e.\ $\hP[1]\subset\hP$).
We will write it as $\D=\hP\perp\hP^{\perp}$.
Such a t-structure is \emph{bounded} if for any object $M$ in $\D$ we have $M[\gg0]\in\hP$ and $M[\ll0]\in\hP^\perp$.

The \emph{heart} of a t-structure $\hP$ is $\h=\hP^\perp[1]\cap\hP$. An additive subcategory $\h$ of a triangulated category $\D$ is the heart of a bounded $t$-structure of $\D$ if and only if the following two conditions hold, see \cite[Lem.~3.2]{B1}.
\begin{itemize}
    \item $\Hom(\h[a],\h[b])=0$ for any $a>b$ in $\ZZ$.
    \item Any object $E$ in $\D$ admits a finite filtration whose associated graded has in each degree an object of the form $H[k]$ with $H\in\h$ and $k\in \mathbb{Z}$.
\end{itemize}
The additive subcategories of a triangulated category $\D$ satisfying the above two conditions will also be referred to as the hearts of $\D$. 

There is a natural partial order on hearts corresponding to the reverse inclusion relation on the associated t-structures, i.e. $\h_1\le\h_2 \Leftrightarrow \hP_1\supset\hP_2$.

\subsection{Tilting theory}\label{subsec:tilting}
In this section we discuss the tilting of hearts, and in particular the tilting with respect to simple objects which are not necessarily rigid.

Recall simple {Happel--Reiten--Smal\o} tilting:
\begin{definition}\cite[Def.~3.7]{KQ}\label{def:forward_tilt}
Let $\h=\hT\perp\hF$ be a torsion pair in a heart $\h$ of a triangulated category $\D$.
Then the \emph{forward} (resp.\ \emph{backward}) \emph{tilt} $\h^\sharp$ (resp.\ $\h^\flat$) w.r.t.\ $\hT\perp\hF$ is the heart that admits
a torsion pair $\h^\sharp=\hF[1]\perp\hT$ (resp.\ $\h^\flat=\hF\perp\hT[-1]$).
The forward (resp.\ backward) tilting is \emph{simple} if $\hF$ (resp.\ $\hT$) equals $\langle S\rangle$
for a simple object $S\in\Sim\h$.
Denote by $\h^\sharp_S$ (resp.\ $\h^\flat_S$) the simple forward (resp.\ backward) tilt of $\h$ w.r.t.\ a simple $S$.
\end{definition}

\def\top{\operatorname{top}}
\def\Sub{\operatorname{sub}}
\def\soc{\operatorname{soc}}
A simple $S$ is \emph{rigid} if $\Ext^1(S,S)=0$.
When $S\in\Sim\h$ is rigid,
the simple tilting with respect to $S$ is particularly easy to determine, see \Cref{rem:tilting}.
Next we consider the simple tilting with respect to a possibly non-rigid simple object. Such simple tiltings were also described in \cite[Def.~7.5, Prop.~7.10]{KY14}.

\begin{proposition}\label{pp:tilting}
Let $S$ be a simple in a heart $\h\in\EG(\D)$, which is a length category.
Then $\<S\>$ is functorially finite in $\h$.
If furthermore both $\h$ and $\h^\sharp_S$ are finite, then
\begin{gather}
  \Sim\h^\sharp_S =
  \{S[1]\}\;\cup\;\{ \psi^\sharp_{S}(X)\mid X\in \Sim\h,X\neq S\},
\label{eq:formula3}
\end{gather}
where $\psi^\sharp_{S}(X)=\Cone(f)[-1]$ for $f$ the left minimal $\<S[1]\>$-approximation of $X$.
Similarly, if both $\h$ and $\h^\flat_S$ are finite, then
\begin{gather}
  \Sim\h^\flat_S =
  \{S[-1]\}\;\cup\;\{ \psi^\flat_{S}(X)\mid X\in \Sim\h,X\neq S\},
\label{eq:formula4}
\end{gather}
where $\psi^\flat_{S}(X)=\Cone(g)$ for $g$ the right minimal $\<S[-1]\>$-approximation of $X$.
\end{proposition}

The statement that $\<S\>$ is functorially finite can also be found in \cite[Thm.~3.3]{D15}.

\begin{proof}[Proof of \Cref{pp:tilting}]
Given $M\in \h$, we define
\begin{gather}\label{eq:free}
  \top_S(M)\coloneqq S\otimes\Hom(M,S)^*\quad\text{and}\quad
    \Sub_S(M)\coloneqq \ker( M\xrightarrow{\text{ev}} \top_S(M) )
\end{gather}
and inductively define
\[\begin{cases}
    \Sub^m_S(M)=\Sub_S\big(\Sub^{m-1}_S(M)\big) \\
    \top^m_S(M)=\top_S\big(\Sub^{m-1}_S(M)\big)
\end{cases}\]
using \eqref{eq:free} and set $\Sub^0_S(M)=M$. For any $M\neq0$ in $\h$,
there exists $j\in\ZZ_{>0}$ with $\top^1_S(M)\ne0$ and $\top^{j+1}_S(M)=0$,
since $\h$ is a length category.
Thus, we obtain a filtration of short exact sequences
\begin{equation} \label{eq:filt}
\begin{tikzcd}[column sep=.5pc]
  \Sub_S^{j}(M) \arrow[rr] && \Sub_S^{j-1}(M) \arrow[dl] \arrow[rr] &&  \cdots \arrow[rr] && \Sub_S^1(M)
        \arrow[rr] && \Sub^0_S(M)=M. \arrow[dl] \\
  & \top^{j}_S(M) \arrow[ul, dashed]  && && && \top^1_S(M) \arrow[ul, dashed]
\end{tikzcd}
\end{equation}
In particular, we have a short exact sequence
\begin{equation} \label{eq:SES}
\begin{tikzcd}[column sep=1.2pc]
    0\arrow[r]& \Sub^j_S(M) \arrow[r]& M \arrow[r,"f_0"]& \top^\bullet_S(M)\arrow[r]& 0,
\end{tikzcd}
\end{equation}
where $\top^\bullet_S(M)$ admits a filtration with factors $\top^i_S(M)$ in $\<S\>$ for $1\le i\le j$.
Thus, $\top^\bullet_S(M)$ is also in $\<S\>$.
As $\top^{j+1}_S(M)=\top_S \big(\Sub^j_S(M)\big)=0$, we have $\Sub^j_S(M)\in {}^{\perp_{\h}} S$.
This implies there is a torsion pair $\h={}^{\perp_{\h}} S\perp \<S\>$.
By applying $\Hom(-,S)$ to \eqref{eq:SES}, we see that $f_0$ is the left $\<S\>$-approximation of $M$.
Hence, $\<S\>$ is covariantly finite.
Dually, there is a torsion pair $\h=\<S\>\perp S^{\perp_{\h}}$ and $\<S\>$ is contravariantly finite.
So $\<S\>$ is functorially finite.

Next, suppose that $\h_S^\sharp$ is also finite.
Then, $\<S[1]\>$ is functorially finite in $\h_S^\sharp$.
Let $X\not\simeq S$ be any other simple in $\h$, so that $X\in {}^{\perp_{\h}} S\subset\h_S^\sharp$.
Let $T$ be a simple subobject of $X$ in $\h_S^\sharp$.
As $X$ is simple in $\h$ and $T$ is simple in $\h_S^\sharp$,
there are short exact sequences
\begin{equation}\label{eq:ses2}
\begin{tikzcd}[column sep=1pc]
    0 \arrow[r]& M \arrow[r]& T \arrow[r,"h"]& X \arrow[r]& 0
    &\text{and}&
    0 \arrow[r]& T \arrow[r,"h"]& X \arrow[r,"f"]& M' \arrow[r]& 0
\end{tikzcd}
\end{equation}
in $\h$ and $\h_S^\sharp$ respectively.
Thus $M'=M[1]$.
As $T$ is a simple in $\h^\sharp_S$,
we have $\Hom(T,S[1])=0$.
Applying $\Hom(-,S[1])$ to the second short exact sequence in \eqref{eq:ses2}, we see that $f$ is the left $\<S[1]\>$-approximation of $X$. It must be minimal, since $f$ is an epimorphism in $\h$. Thus, $T=\psi^\sharp_{S}(X)$ as expected. To show that all simples in $\h_S^\sharp$ are as in \eqref{eq:formula3}, take any simple $T$ in $\h_S^\sharp$. Then $T\in{}^{\perp_{\h}} S\subset\h$ and we find a simple top $X$ of $T$ in $\h$ and as above. We obtain exact sequences as in \eqref{eq:ses2}, which again leads to $T=\psi^\sharp_{S}(X)$, concluding the argument.
\end{proof}

\begin{remark}\label{rem:tilting}
The proposition above is a slight generalization of \cite[Prop.~5.2]{KQ}.
More precisely,
when the simple $S$ is rigid, new simples in the forward simple tilting formulae \eqref{eq:formula3} and \eqref{eq:formula4} are given by the simplified formulae
\begin{eqnarray}
\label{eq:psi+}
  \tilt{\psi}{\sharp}{S}(X)
&=&\Cone\left(X\to S[1]\otimes\Ext^1(X, S)^* \right) [-1],
\\ \label{eq:psi-}
  \tilt{\psi}{\flat}{S}(X)
&=&\Cone \left(S[-1]  \otimes  \Ext^1(S, X)\to X \right).
\end{eqnarray}
\end{remark}

\begin{example}\label{ex:s.tilting}Consider the case where
\begin{itemize}
  \item $\Ext^1(S,S)=k$, with the corresponding non-split extension denoted by $\hat{S}$.
  \item $\Ext^1(\hat{S},S)=0$.
  \item $X\in\Sim\h$ with $\Ext^1(X,S)=k$.
\end{itemize}
Inspecting the proof of \Cref{pp:tilting}, one finds that $\psi^\sharp_{S}(X)=\Cone(X\xrightarrow{f}\hat{S}[1])$.
More precisely, let $\Ext^1(X,S)=k$ correspond to the object $\hat{X}$.
Then we have $\Ext^1(\hat{X},S)=k$, which corresponds to
$\psi^\sharp_{S}(X)$. Equivalently, we have the following tower of short exact sequences in $\h$ (which arises from \eqref{eq:filt} with $M=X$ in $\h^\sharp_{S}$):
\[
\begin{tikzcd}[row sep=.7pc,column sep=1.2pc]
    &&{\psi^\sharp_{S}(X)}\arrow[rd]\\
    & \hat{S}\arrow[ru]\arrow[rd]&& \hat{X}\arrow[rd]\\
    S\arrow[ru]&&S\arrow[ru]&&X.\\
\end{tikzcd}
\]
\end{example}

\section{Perverse schobers parametrized by ribbon graphs}\label{sec:backgroundschobers}

In this section, we recall a notion of categorified perverse sheaves, called parametrized perverse schobers, on surfaces with boundary. Such parametrized perverse schobers amount to certain constructible sheaves with values in stable $\infty$-categories, on ribbon graphs embedded in the surfaces. The exposition follows \cite{Chr22}, except that we refine the discussion of  perverse schobers on the $n$-spider by introducing novel local conditions in \Cref{def:schberngon}.

\subsection{Preliminaries on higher category theory}\label{sec:highercatthy}

We formulate \Cref{sec:backgroundschobers,sec:kits,sec:examples} in the language of Lurie's stable $\infty$-categories. In these sections, we will generally follow the notation of \cite{HTT,HA}. Given a stable $\infty$-category, its homotopy $1$-category inherits a triangulated structure. Because of their good formal properties, stable $\infty$-categories thus provide us with a convenient choice of enhancement for triangulated categories. Even better formal properties are possessed by presentable, stable $\infty$-categories. We will thus often work with the latter. Note that this is not much of a loss of generality: any stable $\infty$-category $\hC$ embeds fully faithfully into its $\on{Ind}$-completion $\on{Ind}(\hC)$, which is a presentable, stable $\infty$-category. Further, if $\hC$ is idempotent complete, then the $\infty$-category of compact objects $\on{Ind}(\hC)^c$ in $\on{Ind}(\hC)$ coincides with $\C$.

We will make use of the following $\infty$-categories of $\infty$-categories:

\begin{itemize}
\item The $\infty$-category of (not necessarily small) $\infty$-categories, denoted by $\on{Cat}_\infty$.
\item The subcategory of $\on{Cat}_\infty$ spanned by stable $\infty$-categories and exact functors,  denoted by $\on{St}$.
\item The subcategory of $\on{Cat}_\infty$ spanned by presentable and stable $\infty$-categories and left adjoint functors, denoted by $\mathcal{P}r^L_{\on{St}}$.
\item The subcategory of $\on{Cat}_\infty$ spanned by presentable and stable $\infty$-categories and right adjoint functors, denoted by $\mathcal{P}r^R_{\on{St}}$.
\item We let $\on{LinCat}_k=\on{LMod}_{\mathcal{D}(k)}(\mathcal{P}r^L_{\on{St}})$ be the $\infty$-category of $k$-linear $\infty$-categories, consisting of left modules over the symmetric monoidal derived $\infty$-category $\mathcal{D}(k)$ in $\mathcal{P}r^L_{\on{St}}$, see \cite[Appendix D.1]{SAG} for background.
\end{itemize}
For simplicity, we will only consider $k$-linear $\infty$-categories in this paper, even though many concepts and constructions would also work $R$-linearly, with $R$ any $\mathbb{E}_\infty$-ring spectrum.

We note that passing to adjoints defines an equivalence of $\infty$-categories
\[ \on{radj}(\mhyphen)\colon \mathcal{P}r^L_{\on{St}}\simeq (\mathcal{P}r^R_{\on{St}})^{\on{op}}\cocolon \on{ladj}(\mhyphen)\,,\]
see \cite[5.5.3.4]{HTT}. Recall also that a functor between presentable $\infty$-categories admits a right adjoint (i.e.\ is a left adjoint) if and only if it preserves all (small) colimits, see \cite[5.5.2.9]{HTT}.

Given a $k$-linear $\infty$-category $\mathcal{C}$, and two objects $X,Y\in \mathcal{C}$, we denote by $\on{RHom}_{\mathcal{C}}(X,Y)\in \mathcal{D}(k)$ the \textit{$k$-linear morphism object}, see \cite[4.2.1.28]{HA}, characterized by the universal property
\[ \on{Map}_{\mathcal{D}(k)}(C,\on{RHom}_{\mathcal{C}}(X,Y))\simeq \on{Map}_{\mathcal{C}}(C\otimes X,Y)\,.\]

The remainder of this section concerns the computation of limits in $\infty$-categories of $\infty$-categories. The forgetful functors $\on{LinCat}_k,\mathcal{P}r^L_{\on{St}},\mathcal{P}r^R_{\on{St}},\on{St}\to \on{Cat}_\infty$ preserve (small) limits. We remark that the forgetful functor $\on{LinCat}_k\to \mathcal{P}r^L_{\on{St}}$ also preserves small colimits, see \cite[4.2.3.5]{HA}. As we now recall, limits in $\on{Cat}_\infty$ of strictly commuting diagrams of simplicial sets indexed by $1$-categories admit a very explicit description in terms of sections of an explicit Grothendieck construction.

Let $F\colon Z\to \on{Set}_{\Delta}$ be a functor from a $1$-category $Z$ to the $1$-category of simplicial sets, i.e.\ a strictly commuting diagram of simplicial sets. We suppose that $F$ takes values in $\infty$-categories, and thus gives rise to a diagram $\hF\colon N(Z)\to \on{Cat}_\infty$ (which one should think of as a homotopy commutative diagram), where $N(Z)$ is the nerve of $Z$. In the following, we will not distinguish in notation between $Z$ and $N(Z)$.

Later on, we will be interested in the case where $Z$ is the exit path category of a quiver, and thus all $2$-simplices in $Z$ will be degenerate. In this case, we thus have that any diagram $\hF\colon Z\to \on{Cat}_\infty$ commutes strictly, i.e.\ arises from a functor $F$ as above.

To $F$, we associate an $\infty$-category $\Gamma(F)$, called the \textit{Grothendieck construction}, together with a coCartesian fibration $p\colon \Gamma(F)\to Z$, which is classified by the functor $\hF$. The definition of $\Gamma(F)$ is given in \cite[3.2.5.2]{HTT}, where $\Gamma(F)$ is called the relative nerve. Informally, an object in $\Gamma(F)$ consists of choices of $z\in Z$ and $x\in F(z)$. A morphism in $\Gamma(F)$ from $(z,x)$ to $(z',x')$ consists of a morphism $\alpha\colon  z\to z'$ in $Z$ and a morphism $F(\alpha)(x)\to x'$ in $F(z')$.

A section of $p$ consists of a functor $s\colon Z\to \Gamma(F)$, whose composite with $p$ is the identity on $Z$. These assemble into the \textit{$\infty$-category of sections}
\[\on{Fun}_Z(Z,\Gamma(F))\coloneqq \on{Fun}(Z,\Gamma(F))\times_{\on{Fun}(Z,Z)} \{\on{id}_Z\}\,.\]
A section $s$ is called \textit{coCartesian}, if $s(e)$ is a \textit{$p$-coCartesian edge} (in the dual sense of \cite[2.4.1.1]{HTT}) for all edges $e$ of $Z$. Concretely, an edge $s(e)\colon (z,x)\to (z',x')$ is $p$-coCartesian if and only if the morphism $F(\alpha)(x)\to x'$ in $F(z')$ is an equivalence. The full subcategory of $\on{Fun}_Z(Z,\Gamma(F))$ spanned by coCartesian sections of $p$ describes the limit of $\hF$, see \cite[7.4.1.9]{Ker}. This description of the limit is both simple and very explicit, thus allowing for concrete computations. Computationally, we also benefit from the observation that the limit arises as the full subcategory of the bigger $\infty$-category of all sections of $p$ (the lax limit).

\subsection{Spherical adjunctions and perverse schobers locally}

In this section we discuss perverse schobers on ribbon graphs with a single vertex of valency $n$ (\textit{$n$-spiders}).
These are shown to correspond to spherical adjunction, up to equivalence.
They serve as the local model for perverse schobers on general ribbon graphs, which we will consider in the subsequent section.

\begin{definition}
Let $F\colon \mathcal{V}\leftrightarrow \mathcal{N}\cocolon G$ be an adjunction between stable $\infty$-categories.
\begin{enumerate}
\item The \emph{twist functor} $T_{\mathcal{V}}\colon \mathcal{V}\rightarrow \mathcal{V}$ is defined as the cofiber of the unit $\on{u}\colon \on{id}_{\mathcal{V}}\rightarrow GF$ in the stable $\infty$-category $\on{Fun}(\mathcal{V},\mathcal{V})$ of endofunctors\footnote{Note that this definition of twist and cotwist functors differs from the standard convention for the definition of the twist along a spherical object (which corresponds to a shifted cotwist).}.
\item Dually, the \emph{cotwist functor} $T_{\mathcal{N}}$ is defined as the fiber of the counit $\on{cu}\colon FG\rightarrow \on{id}_{\mathcal{N}}$.
\item The adjunction $F\dashv G$ is called \emph{spherical} if both $T_{\mathcal{V}}$ and $T_{\mathcal{N}}$ are equivalences.
\item A functor is \emph{spherical} if it admits a right adjoint, such that the corresponding adjunction is spherical.
\end{enumerate}
\end{definition}

The notion of a spherical functor was introduced by Anno, see also \cite{AL17}. Treatments in the setting of stable $\infty$-categories can be found in \cite{DKSS19,Chr20}.

As observed in \cite{KS14}, spherical adjunctions may be seen as a categorification of perverse sheaves on the complex unit disc $\mathbb{D}\subset \mathbb{C}$ with a single singularity at $0$. In this description, the $\infty$-category $\mathcal{N}$ categorifies the generic stalk of the perverse sheaf, i.e.\ any stalk away from $0$, which is a chain complex concentrated in a single degree. The $\infty$-category $\mathcal{V}$ categorifies the derived global sections of the perverse sheaf with support on the positive real axis $\mathbb{R}_{\geq 0}\cap \mathbb{D} \subset \mathbb{D}$. These derived sections turn out to form a chain complex whose homology is concentrated in a single degree; this homology is called the vector space of vanishing cycles. Categorified perverse sheaves are also referred to as perverse schobers. We can think of $\mathbb{R}_{\geq 0} \cap \mathbb{D}\subset \mathbb{D}$ as describing a graph with a single, $1$-valent vertex at $0$ inside the disc. This graph is also called the $1$-spider, as a special case of the $j$-spider (which we will use later):

\begin{definition}\label{def:spider}
For any number $j\in\NN_{\ge1}$, the \emph{$j$-spider} $\spider_{j}$ is the ribbon graph with a single vertex $v$ and $j$ incident external edges.
\end{definition}
We may thus summarize the above in the following definition.
\begin{definition}\label{def:schber1gon}
 A \emph{perverse schober parametrized by the $1$-spider}, or on the $1$-spider for short, consists of a spherical adjunction between stable $\infty$-categories
\[ F\colon \mathcal{V}\longleftrightarrow \mathcal{N}\cocolon G\,.\]
 \end{definition}

More generally, one can describe perverse sheaves on $\mathbb{D}$ in terms of their sections with support on the $n$-spider, where the $n$-valent vertex again lies at $0$ and $n\geq 1$, see \cite{KS16}. We propose to categorify this description as follows, leading to the notion of perverse schober on the $n$-spider.

\begin{definition}\label{def:schberngon}
 Let $n\geq 2$. A collection of adjunctions
\[ (F_i\colon \mathcal{V}^n\longleftrightarrow \mathcal{N}_i\cocolon G_i)_{i\in \ZZ/n}\]
between stable $\infty$-categories is called a \emph{perverse schober parametrized by the $n$-spider}, or a perverse schober on the $n$-spider for short, if
\begin{enumerate}
    \item $G_i$ is fully faithful, i.e.\ $F_iG_i\simeq \on{id}_{\mathcal{N}_i}$ via the counit,
    \item $F_{i}\circ G_{i+1}$ is an equivalence of $\infty$-categories,
    \item $F_i\circ G_j\simeq 0$ if $j\neq i,i+1$,
    \item $G_i$ admits a right adjoint $\on{radj}(G_i)$ and
    \item $\on{fib}(\on{radj}(G_{i+1}))=\on{fib}(F_{i})$ as full subcategories of $\mathcal{V}^n$.
\end{enumerate}
We will also consider a collection of functors $(F_i\colon \mathcal{V}^n\rightarrow \mathcal{N}_i)_{i\in \ZZ/n}$ as determining a perverse schober on the $n$-spider, or as a perverse schober on the $n$-spider for short, if there exist adjunctions $(F_i\dashv \on{radj}(F_i))_{i\in \ZZ/n}$ which define a perverse schober on the $n$-spider.
\end{definition}

In the subsequent sections of this paper, we will be interested in, and thus consider, $k$-linear perverse schobers, meaning that they take values in $k$-linear stable $\infty$-categories (which we assumed to be presentable) and $k$-linear functors. The comparison of the above \Cref{def:schberngon} with the definition of a perverse schober on the $n$-spider given in \cite{Chr22b} however does not rely on the $k$-linearity or presentability, so we prefer to stay in the more general setting of arbitrary stable $\infty$-categories in this subsection.

\begin{remark}\label{rem:adjgifi}
Consider a perverse schober on the $n$-spider $(F_i\colon \mathcal{V}^n\rightarrow \mathcal{N}_i)_{i\in \ZZ/n}$. Since $G_i$ and $G_{i+1}$ are fully faithful, we find that their adjoints $F_i$ and $\on{radj}(G_{i+1})$ are localizations in the sense that they factor through Verdier quotients as
\[
F_i\colon \mathcal{V}^n\twoheadrightarrow \mathcal{V}^n/\on{fib}(F_i)\simeq \mathcal{N}_i\]
and
\[\on{radj}(G_{i+1})\colon \mathcal{V}^n\twoheadrightarrow  \mathcal{V}^n/\on{fib}(\on{radj}(G_{i+1}))\simeq \mathcal{N}_{i+1}\,.
\]
By condition $5^\circ$, we have $\on{fib}(F_i)=\on{fib}(\on{radj}(G_{i+1}))$, and the resulting equivalence $\mathcal{N}_{i+1}\simeq \mathcal{N}_i$ identifies with $F_i G_{i+1}$. We thus see that condition $2^\circ$ follows from condition $5^\circ$, making it redundant. Furthermore, we find that $F_i\simeq F_i G_{i+1} \on{radj}(G_{i+1})$, meaning that $F_i$ and $\on{radj}(G_{i+1})$ are equivalent up to postcomposition with the equivalence $F_i G_{i+1}\colon \mathcal{N}_{i+1}\simeq \mathcal{N}_{i}$.

Note that this also shows that $F_i$ admits a left adjoint $\on{ladj}(F_i)$, which is then also equivalent to $G_{i+1}$ up to precomposition with an equivalence.
\end{remark}

At first glance, \Cref{def:schberngon} may seem quite different from \Cref{def:schber1gon}. We proceed with justifying \Cref{def:schberngon}, by showing that the datum of a perverse schober on the $n$-spider is equivalent to the datum of a perverse schober on the $1$-spider. Before that, we illustrate the special case $n=2$. In this case, the $\infty$-category $\mathcal{V}^2$ admits a $4$-periodic semiorthogonal decomposition $(\mathcal{V}^1,\mathcal{N})$, which implies that the gluing functor $\mathcal{N}\to \mathcal{V}^1$ (see below) is spherical, giving rise to the corresponding perverse schober on the $1$-spider. See also \cite{HLS16,DKSS19} for discussions of the relation between $4$-periodic semiorthogonal decompositions and spherical functors. The two fully faithful functors $\mathcal{N}\simeq \mathcal{N}_i\xrightarrow{G_i} \mathcal{V}^2$, $i=1,2$, describe the inclusion of the component $\mathcal{N}$ of the semiorthogonal decomposition and the inclusion of a component of a mutated semiorthogonal decomposition.

\begin{proposition}\label{prop:ngonschober}
Let $n\geq 2$. Given a perverse schober on the $n$-spider
\[ (F_i\colon \mathcal{V}^n\longleftrightarrow \mathcal{N}_i\cocolon G_i)_{i\in \ZZ/n}\]
and an integer $1\leq j\leq n$, the collection of functors
\begin{equation}\label{eq:gif}
(F_i|_{\on{fib}(F_j)}\colon  \on{fib}(F_j)\longrightarrow \mathcal{N}_i)_{j\neq i\in \ZZ/n}
\end{equation}
determines a perverse schober on the $(n-1)$-spider.
\end{proposition}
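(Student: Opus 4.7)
The plan is to produce explicit left and right adjoints for the restricted functors and verify conditions 1--5 of \Cref{def:schberngon} for the $(n-1)$-spider with indices $I := \mathbb{Z}/n \setminus \{j\}$ and cyclic successor $\sigma$ inherited from $\mathbb{Z}/n$ (so $\sigma(i) = i+1$ if $i+1 \neq j$, and $\sigma(j-1) = j+1$).

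First I would establish the ambient recollement: the adjunction data of condition 4 together with the full faithfulness of $f_j$ (condition 1) imply that the inclusion $\iota\colon \mathcal{V}^{n-1} := \on{fib}(g_j) \hookrightarrow \mathcal{V}^n$ fits into an adjoint string $L_j \dashv \iota \dashv R_\iota$ with $L_j X \simeq \on{cofib}(f_j g_j X \to X)$ (this lies in $\mathcal{V}^{n-1}$ because $g_j f_j \simeq \on{id}$). Define $g_i' := g_i \iota$ for $i \in I$, with candidate left adjoint $f_i' := L_j f_i$ and right adjoint $R_\iota R_i$. The crucial simplification is that condition 3 of the original schober gives $g_j f_i \simeq 0$ for all $i \in I \setminus \{j-1\}$, so $f_i$ already factors through $\iota$ and $f_i' \simeq f_i$ under this identification. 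Only $f_{j-1}'$ is genuinely new, and is governed by the cofiber sequence
\[ f_j \phi \longrightarrow f_{j-1} \longrightarrow \iota f_{j-1}', \]
where $\phi := g_j f_{j-1} \colon \mathcal{N}_{j-1} \xrightarrow{\sim} \mathcal{N}_j$ is the equivalence from condition 2.

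Conditions 1--3 for the new schober then reduce to routine case analysis. The only nontrivial verifications involve $i = j-1$: applying $g_{j-1}$ and $g_{j+1}$ to the above cofiber sequence and invoking $g_{j-1}f_j \simeq 0$ and $g_{j+1}f_{j-1} \simeq 0$ from condition 3 (valid for $n \geq 3$) together with condition 2, one obtains $g_{j-1}'f_{j-1}' \simeq \on{id}$ and $g_{j+1}'f_{j-1}' \simeq (g_{j+1}f_j)\phi[1]$, an equivalence up to shift. The vanishing required by condition 3 for the new schober follows by the same case analysis, using $f_{i'}' \simeq f_{i'}$ for $i' \neq j-1$.

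The main obstacle is the remaining half of condition 4 together with condition 5 at the index $i = j-1$. A naive identification of $\on{ladj}(f_{j-1}')$ with $L_{j-1}' \iota$, where $L_i' := \on{ladj}(f_i)$, fails because condition 5 of the original schober at index $j-1$ forces $L_{j-1}'$ to vanish on $\on{fib}(g_j)$, leading to a zero left adjoint that is incompatible with condition 3. The correct computation instead dualizes the defining cofiber sequence of $f_{j-1}'$ to a cofiber sequence of left adjoints; restricting this along $\iota$, where $L_j \iota \simeq \on{id}$, yields the formula
\[ \on{ladj}(f_{j-1}')(Y) \simeq \on{fib}\bigl( L_{j-1}' Y \to \phi^{-1} L_j' Y \bigr), \qquad Y \in \mathcal{V}^{n-1}. \]
With this formula in hand, condition 5 follows. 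For $i \neq j-1$ one has $\on{ladj}(f_i') \simeq L_i' \iota$ directly, so $\on{fib}(\on{ladj}(f_i')) = \on{fib}(L_i') \cap \mathcal{V}^{n-1} = \on{fib}(g_{i+1}) \cap \on{fib}(g_j) = \on{fib}(g_{\sigma(i)}')$ by condition 5 of the original. For $i = j-1$, the hypothesis $Y \in \mathcal{V}^{n-1} = \on{fib}(g_j)$ combined with condition 5 at index $j-1$ yields $L_{j-1}'Y \simeq 0$, so the formula collapses to $\on{ladj}(f_{j-1}')(Y) \simeq \phi^{-1} L_j' Y[-1]$; this vanishes precisely when $L_j' Y \simeq 0$, i.e., when $Y \in \on{fib}(g_{j+1})$ by condition 5 at index $j$. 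Hence $\on{fib}(\on{ladj}(f_{j-1}')) = \on{fib}(g_j) \cap \on{fib}(g_{j+1}) = \on{fib}(g_{\sigma(j-1)}')$, completing the verification.
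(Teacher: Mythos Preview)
Your argument for $n\geq 3$ is correct and follows the same route as the paper's: both identify the left adjoint of $g_{j-1}|_{\on{fib}(g_j)}$ as the cofiber of the counit map $f_j g_j f_{j-1}\to f_{j-1}$ (your $\iota f_{j-1}'$, the paper's $\on{cof}(\eta)$), and observe that $f_i$ factors through $\on{fib}(g_j)$ for the remaining indices. Your verification of condition~5 via the explicit formula for $\on{ladj}(f_{j-1}')$ is more detailed than the paper's ``one readily verifies,'' but the strategy is the same.

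However, there is a genuine gap: you do not treat the case $n=2$. Your plan is to verify conditions 1--5 of \Cref{def:schberngon}, but that definition applies only to the $m$-spider for $m\geq 2$; when $n=2$ the target is the $1$-spider, and by \Cref{def:schber1gon} you must instead show that the single remaining functor $g_i|_{\on{fib}(g_j)}$ (with its left adjoint) is \emph{spherical}. Your own parenthetical ``(valid for $n\geq 3$)'' flags that the vanishing $g_{j\pm 1}f_{j\mp 1}\simeq 0$ fails for $n=2$, where condition~3 of \Cref{def:schberngon} is vacuous. The paper handles this case by a separate argument: conditions 1, 4 and 5 of the $2$-spider yield semiorthogonal decompositions $(\on{Im}(f_i),\on{fib}(\on{ladj}(f_i)))$ and $(\on{fib}(g_i),\on{Im}(f_i))$ which, together with $\on{fib}(\on{ladj}(f_i))=\on{fib}(g_{i+1})$, are $4$-periodic in the sense of \cite{HLS16,DKSS19}; the sphericalness of the gluing functor $g_i|_{\on{fib}(g_{i+1})}$ then follows from \cite[Prop.~2.5.12]{DKSS19}. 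You need to supply this (or an equivalent) argument.
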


\begin{proof}
We begin with the case $n\geq 3$. We fix $j$ as above and first describe the right adjoints of the functors $F_i|_{\on{fib}(F_j)}$ for any $i\in 
(\mathbb{Z}/n)\backslash \{j\}$. For $i\neq j,j+1$, we have $F_j G_i\simeq 0$. For $i\neq j,j+1$, the functor $G_j$ thus factors through $\on{fib}(F_j)\subset \mathcal{V}^n$ and this factorization defines the right adjoint of $F_i|_{\on{fib}(F_j)}$.

We next describe the right adjoint of $F_{j+1}|_{\on{fib}(F_j)}$. The adjunction unit defines a natural transformation $\epsilon \colon G_{j+1}\to G_{j}F_{j}G_{j+1}$, which becomes a natural equivalence after composing with $F_j$. The image of the fiber $\on{fib}(\epsilon)$ of $\epsilon$ is hence contained in $\on{fib}(F_j)$. We show that $\on{fib}(\epsilon)$ is right adjoint to $F_{j+1}|_{\on{fib}(F_j)}$. We consider the adjunction unit $\on{u}\colon \on{id}_{\mathcal{V}^n}\to G_{j+1} F_{j+1}$. Any natural transformation $\on{id}_{\on{fib}(F_j)}\to G_jF_jG_{j+1}F_{j+1}$ is adjoint to a natural transformation $0\simeq F_j|_{{\on{fib}(F_j)}}\to F_jG_{j+1}F_{j+1}$ and thus vanishes. Thus, the restriction of $\on{u}$ to ${\on{fib}(F_j)}$ induces a natural transformation
\[
\on{u}'\colon \on{id}_{\on{fib}(F_j)}\to \on{fib}(\epsilon)\circ F_{j+1}|_{\on{fib}(F_j)}\simeq \on{fib}(G_{j+1}F_{j+1}\to G_{j}F_{j}G_{j+1}F_{j+1})\,.
\]
We define a second natural transformation as the natural equivalence
\[
\on{cu}'\colon F_{j+1}|_{{\on{fib}(F_j)}} \circ \on{fib}(\epsilon)\simeq F_{j+1} G_{j+1}\simeq \on{id}_{\mathcal{N}_{j+1}}\,.
\]
The natural transformations $\on{u}'$ and $\on{cu}'$ satisfy the triangle identities:

The natural transformation $F_{j+1}|_{\on{fib}(F_j)}\on{u}'$ identifies with the restriction of $F_{j+1}\on{u}$ since $F_{j+1} G_j\simeq 0$. The composite $(\on{cu}'F_{j+1}|_{\on{fib}(\epsilon)})\circ (F_{j+1}|_{\on{fib}(\epsilon)}\on{u}')$ thus identifies with the restriction of the natural transformation $\on{id}_{F_{j+1}}\simeq (\on{cu}F_{j+1})\circ (F_{j+1}\on{u})\colon F_{j+1}\simeq F_{j+1}$ to ${\on{fib}(F_j)}$.

As above, any natural transformation 
\[ \on{fib}(\epsilon)\simeq \on{id}_{\on{fib}(F_j)}\circ \on{fib}(\epsilon)\to G_jF_jG_{j+1}\]
is adjoint to a zero transformation and thus vanishes. Hence, to show that $(\on{fib}(\epsilon) \on{cu}')\circ (\on{u}' \on{fib}(\epsilon))$ is equivalent to the identity, it suffices to show that $(\on{fib}(\epsilon) \on{cu}')\circ (\on{u}' \on{fib}(\epsilon))$ composes with the fiber natural transformation $\alpha\colon \on{fib}(\epsilon)\to G_{j+1}$ of $\epsilon$ to the fiber natural transformation $\alpha$. This follows from the commutativity of the following diagram:
\[
\begin{tikzcd}[column sep=large]
\on{fib}(\epsilon) \arrow[d, "\alpha"] \arrow[r, "\on{u}' \on{fib}(\epsilon)"']                                              & \on{fib}(\epsilon)\circ F_{j+1} \circ \on{fib}(\epsilon) \arrow[r, "\simeq"']  \arrow[r, "(\on{fib}(\epsilon)\circ F_{j+1})\alpha"]  \arrow[rr, "\on{fib}(\epsilon)\on{cu}'", bend left=18] \arrow[d, "\simeq"] \arrow[d, "(\on{fib}(\epsilon)\circ F_{j+1})\alpha"'] & \on{fib}(\epsilon)\circ F_{j+1} \circ G_{j+1} \arrow[d, "\alpha F_{j+1}G_{j+1}"'] \arrow[r, "\on{fib}(\epsilon) \on{cu}"'] & \on{fib}(\epsilon) \arrow[d, "\alpha"] \\
G_{j+1} \arrow[rr, "\on{u}G_{j+1}"', bend right=12] \arrow[rrr, "\on{id}_{G_{j+1}}"', bend right=20] \arrow[r, "\on{u}'G_{j+1}"] & \on{fib}(\epsilon)\circ F_{j+1} G_{j+1} \arrow[r, "\alpha F_{j+1}G_{j+1}"]                                                                                                                                     & G_{j+1}F_{j+1}G_{j+1} \arrow[r, "G_{j+1}\on{cu}"]                                                                               & G_{j+1}                               
\end{tikzcd}
\]
The commutativity of the top and bottom triangles follows from the definition of $\on{cu}'$, $\on{u}'$, and the triangle identity for $F_{j+1}\dashv G_{j+1}$. The commutativity of the left and right squares is the interchange law for natural transformations and the middle square trivially commutes.

We have thus proved the two triangle identities, showing that $F_{j+1}|_{\on{fib}(\epsilon)}$ is left adjoint to $\on{fib}(\epsilon)$, see \cite[{Tags \href{https://kerodon.net/tag/02EK}{02EK},\href{https://kerodon.net/tag/02EL}{02EL}}]{Ker}.

With the above, we verify in the following that the collection of functors \eqref{eq:gif} and their right adjoints satisfy the conditions of \Cref{def:schberngon}: For $i\in (\mathbb{Z}/n)\backslash \{j\}$, let $G_i'$ denote the right adjoint of $F_i|_{\on{fib}(F_j)}$. For $i\not= j+1$, $G_i'$ is a restriction of $G_i$ and thus fully faithful. The functor $G_{j+1}'=\on{fib}(\epsilon)$ is fully faithful since the adjunction counit $\on{cu}'$ is an equivalence. This shows condition $1^\circ$.

For condition $2^\circ$, the only new condition to check is that $F_{j-1}|_{\on{fib}(F_j)}\circ G_{j+1}'$ is an equivalence. This follows from
\[ F_{j-1}|_{\on{fib}(F_j)}\circ \on{fib}(\epsilon)\simeq \on{fib}(\underbrace{F_{j-1}\circ G_{j+1}}_{\simeq 0}\to F_{j-1}G_jF_jG_{j+1})\simeq F_{j-1}G_jF_jG_{j+1}[-1]\,,\]
where $F_{j-1}G_j$ and $F_jG_{j+1}$ are each equivalences by assumption.

For condition $3^\circ$, the new condition to check is that $F_i|_{\on{fib}(F_j)}\circ \on{fib}(\epsilon)\simeq 0$ for $i\not= j+1,j-1$, which, by inspecting the fiber sequence defining $\on{fib}(\epsilon)$, reduces to $F_i\circ G_{j+1},F_i\circ G_{j}\simeq 0$.

For condition $4^\circ$, we note that the right adjoint of $G_{i}'$ is given by $\on{radj}(G_i)|_{\on{fib}(F_j)}$ for $i\not= j+1$. Since the passage to right adjoints defines an exact functor, $\on{fib}(\epsilon)$ admits a right adjoint, given by $\on{cof}(\on{radj}(G_{j+1})G_j\on{radj}(G_j)\to \on{radj}(G_{j+1}))$.

For condition $5^\circ$, the only new condition to check is that the fiber of $\on{radj}(G_{j+1}')=\on{radj}(\on{fib}(\epsilon))$ agrees with $\on{fib}(F_{j-1}|_{\on{fib}(F_j)})$. We observe that
\[ \on{radj}(G_{j+1}')\simeq \on{cof}(\on{radj}(G_{j+1})G_j\on{radj}(G_j)\to \on{radj}(G_{j+1}))\] is equivalent to $\on{radj}(G_{j+1})G_j\on{radj}(G_j')[1]$, since $\on{radj}(G_{j+1})$ vanishes on $\on{fib}(F_j)=\on{fib}(\on{radj}(G_{j+1}))$ (where the latter equality arises from condition $5^\circ$ for the perverse schober on the $n$-spider). The assumption that $F_jG_{j+1}$ is an equivalence, implies after passing to right adjoints that $\on{radj}(G_{j+1})G_j$ is also an equivalence. It follows that
\[
\on{fib}(\on{radj}(G_{j+1}'))\simeq \on{fib}(\on{radj}(\on{G_j})|_{\on{fib}(F_j)})\simeq \on{fib}(F_{j-1}|_{\on{fib}(F_j)})\,,
\]
again by condition $5^\circ$ of the perverse schober on the $n$-spider.\\

We proceed with the case $n=2$. We prove the assertion via a different argument than above, but note for completeness that the right adjoint of $F_2|_{\on{fib}(F_1)}$ is again given by $\on{fib}(\epsilon)=\on{fib}(G_2\to G_1F_1G_2)$. Using the adjunctions $F_1\dashv G_1$ and $F_2\dashv G_2$ and \cite[Prop.~2.3.2]{DKSS19}, we find that there are semiorthogonal decompositions $(\on{Im}(G_i),\on{fib}(F_i))$ and $(\on{fib}(\on{radj}(G_i)),\on{Im}(G_i))$ for $i=1,2$, where $\on{Im}(G_i)\subset \mathcal{V}^n$ denotes the essential image of $G_i$. The condition $\on{fib}(\on{radj}(G_{i+1}))=\on{fib}(F_i)$ is equivalent to the $4$-periodicity of these semiorthogonal decompositions, in the sense of \cite{HLS16,DKSS19}. It follows from \cite[Prop.~2.5.12]{DKSS19} that the gluing functor of $(\on{fib}(\on{radj}(G_i)),\on{Im}(G_i))$, given by the restriction of $F_i$ to $\on{fib}(\on{radj}(G_i))=\on{fib}(F_{i-1})$, is a spherical functor.
\end{proof}

Next, we describe an explicit model for perverse schobers on the $n$-spider, taking as input a spherical functor. This model is inspired by Dyckerhoff's categorified Dold-Kan correspondence \cite{Dyc17}, see also the discussion in \cite{Chr22}. Let $F\colon \mathcal{V}\to \mathcal{N}$ be a spherical functor and define $\mathcal{V}^n_F$ as the stable $\infty$-category of diagrams \[ a\rightarrow b_1\rightarrow \dots \rightarrow b_{n-1}\]
where $a\in \mathcal{V}$, $b_i\in \mathcal{N}$ and the morphism $a\rightarrow b_1$ lies in the Grothendieck construction of $F$, i.e.\ encodes a morphism $F(a)\rightarrow b_1$. For a more precise definition of $\mathcal{V}^n_F$, in terms of sections of a Grothendieck construction, see \cite[Notation~3.2, Lem.~2.29]{Chr22}. We define $\varrho_1=\pi_{n-1}\colon \mathcal{V}^n_F\rightarrow \mathcal{N}$ to be the projection functor to $b_{n-1}$ and, for $2\leq i\leq n$, $\varrho_i$ to be the $(2i-2)$-fold left adjoint of $\pi_{n-1}$. On objects, the functor $\varrho_i$ can be explicitly described via the following formula, see \cite[Lem.~3.3]{Chr22}: Let $X=(a\to b_1\to \dots \to b_{n-1})\in \mathcal{V}^n_F$. Then
\begin{equation}\label{eq:description_rho_functor}
    \varrho_i(X)\simeq \begin{cases} 
    b_{n-1} & i=1\,, \\ 
    \on{fib}(b_{n-i}\to b_{n-i+1})[i-1] & 1<i<n\,,\\
    \on{fib}(F(a)\to b_1)[n-1] & i=n\,. \end{cases}
\end{equation}
The functor $\varrho_i$ admits a right adjoint $\varsigma_i\colon \mathcal{N}\to \mathcal{V}^n_F$. This functor can be described just as explicitly on objects, see again \cite[Lem.~3.3]{Chr22}: Let $b\in \mathcal{N}$. Then:
\begin{equation}\label{eq:description_sigma_functor}
    \varsigma_i(b)\simeq \begin{cases} 
    (G(b)\to b \to \dots \to b) & i=1\,, \\ 
    (0\to \dots \to 0\to \underbrace{b}_{\makebox[0pt][l]{$\scriptstyle{(n-i-2)\text{-th position}}$}}\to 0 \to \dots )[-i+2] & 1<i\leq n\,,
    \end{cases}
\end{equation}
where the above morphism $G(b)\to b$ is Cartesian in the Grothendieck construction, meaning it encodes the counit morphism $FG(b)\to b$.

The collection of adjunctions $(\varrho_i\dashv \varsigma_i)_{i\in \mathbb{Z}/n}$ defines a perverse schober on the $n$-spider, as follows readily from inspecting the above descriptions and using \cite[Lem.~3.8]{Chr22}:

\begin{proposition}[The model for perverse schobers on the $n$-spider]\label{prop:ngonschober2}
Let $F\colon \mathcal{V}\to \mathcal{N}$ be a spherical functor. The collection of adjunctions
\[(\varrho_i\colon \mathcal{V}^n_F\longleftrightarrow\mathcal{N}\noloc \varsigma_i)_{i\in \ZZ/n} \]
defines a perverse schober on the $n$-spider, denoted $\hF_n(F)$.
\end{proposition}

Consider a stable $\infty$-category $\mathcal{C}$ with a semiorthogonal decomposition $(\mathcal{A},\mathcal{B})$. The inclusion $\iota_{\mathcal{A}}\colon \mathcal{A}\subset \C$ always admits a left adjoint and the inclusion $\iota_\mathcal{B}\colon \mathcal{B}\subset \D$ always admits a right adjoint \cite[Prop.~2.3.2]{DKSS19}. If furthermore $\iota_\mathcal{A}$ admits a right adjoint $\iota_\mathcal{A}^R$, then $(\mathcal{A},\mathcal{B})$ is called a \textit{Cartesian semiorthogonal decomposition} and the corresponding \textit{gluing functor} is defined as $\iota_A^R\circ \iota_\mathcal{B}\colon \mathcal{B}\to \mathcal{A}$. Similarly, if $\iota_\mathcal{B}$ admits a left adjoint $\iota_\mathcal{B}^L$, then $(\mathcal{A},\mathcal{B})$ is called a \textit{coCartesian semiorthogonal decomposition}, and the \textit{gluing functor} is $\iota_\mathcal{B}^L\circ \iota_\mathcal{A}\colon \mathcal{A}\to \mathcal{B}$. In either of these two cases, $\C$ can be reconstructed given $\mathcal{A}$ and $\mathcal{B}$ as well as the gluing functor, see \cite[Rem.~2.3.7]{DKSS19}.

Given a perverse schober on the $n$-spider $\mathcal{F}$, we obtain the \textit{spherical adjunction underlying $\mathcal{F}$}, by repeatedly applying \Cref{prop:ngonschober} to any $(n-1)$ of the $n$ edges of the $n$-spider. Up to equivalence, the arising spherical adjunction is indepent of the choice of edges. 

\begin{proposition}\label{prop:localmodelforschobers}
Consider a perverse schober on the $n$-spider $\mathcal{F}$ 
with underlying spherical functor $F\colon \mathcal{V}\to \mathcal{N}$. Then there exists an equivalence of perverse schobers\footnote{By an equivalence of perverse schobers on the $n$-spider, we mean a compatible collection of equivalences of the $n$ adjunctions.} $\mathcal{F}\simeq \mathcal{F}_n(F)$.\\
In particular, \Cref{def:schberngon} is equivalent to the definition of a perverse schober on the $n$-spider given in \cite{Chr22}.
\end{proposition}

\begin{proof}
We prove the assertion by an induction on $n$. In the case $n=1$, the assertion is trivial. 

Suppose that $n>1$. We denote the perverse schober $\mathcal{F}$ by $(F_i\colon \mathcal{V}^n\longleftrightarrow \mathcal{N}_i\cocolon G_i)_{i\in \ZZ/n}$.  Fix $j\in \mathbb{Z}/n$ and denote the perverse schober on the $(n-1)$-spider arising from $\mathcal{F}$ via \Cref{prop:ngonschober} by $(F_i'\colon \mathcal{V}^{n-1}\longleftrightarrow \mathcal{N}_i\cocolon G_i')_{i\in \ZZ/n, i\not= j}$. For the induction step, assume that $\mathcal{F}'\simeq \mathcal{F}_{n-1}(F)$. Then $\mathcal{V}^{n-1}=\on{fib}(F_j)\simeq \mathcal{V}^{n-1}_F$. The stable $\infty$-category $\mathcal{V}^n$ has a semiorthogonal decomposition into $\mathcal{V}^{n-1}$ and $\mathcal{N}_j\simeq \mathcal{N}$ with fully faithful gluing functor, see \Cref{lem:SOD} below. Further $\mathcal{V}^n_F$ has a semiorthogonal decomposition into $\mathcal{V}^{n-1}_F$ and $\mathcal{N}$, and the corresponding gluing functor identifies with the gluing functor of $\mathcal{V}^n$. We thus find $\mathcal{V}^n\simeq \mathcal{V}^n_F$. Further, under this identification with $\mathcal{V}^n_F$, the functor $F_j$ corresponds to the left adjoint of the inclusion of the component $\mathcal{N}\simeq \mathcal{N}_j$, meaning to $\varrho_j$. By \Cref{rem:adjgifi}, the functors $F_i$ and $\varrho_i$, with $i\in (\mathbb{Z}/n)\backslash \{j\}$, can be recovered up to postcomposition with equivalences by passing to repeated adjoints of $F_j$ and $\varrho_j$, respectively. Thus also $F_i$ is equivalent to $\varrho_i$ (up to postcomposition with equivalences), showing the desired equivalence of perverse schobers.
\end{proof}

\begin{lemma}\label{lem:SOD}
Let $(F_i\colon \mathcal{V}^n\longleftrightarrow \mathcal{N}_i\cocolon G_i)_{i\in \ZZ/n}$ be a perverse schober on the $n$-spider. Then there is a Cartesian semiorthogonal decomposition $(\on{fib}(F_{i}),\on{Im}(G_{i+1}))$ for any $i\in \ZZ/n$. If $n=2$, the gluing functor is spherical and if $n\geq 3$, the gluing functor from $\on{Im}(G_{i+1})$ to $\on{fib}(F_{i})$ is fully faithful.
\end{lemma}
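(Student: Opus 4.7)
The plan is to establish the semiorthogonal decomposition uniformly in $n\geq 2$ and then treat the cases $n=2$ and $n\geq 3$ separately. For the semiorthogonal decomposition, both subcategories are automatically stable as the essential image and fiber of exact functors. For semi-orthogonality I would use condition~5 of \Cref{def:schberngon}, which gives $\on{fib}(g_{i+1})=\on{fib}(\on{ladj}(f_i))$: for $W\in\on{fib}(g_{i+1})$ and $A=f_i(X)\in\on{Im}(f_i)$, the adjunction $\on{ladj}(f_i)\dashv f_i$ yields $\on{Map}(W,A)\simeq \on{Map}(\on{ladj}(f_i)(W),X)\simeq 0$. For the decomposition of a general $V\in\mathcal{V}^n$, apply the unit $u\colon \on{id}\to f_i\on{ladj}(f_i)$ to obtain a fiber sequence $K(V)\to V\xrightarrow{u_V} f_i\on{ladj}(f_i)(V)$ whose right-hand term lies in $\on{Im}(f_i)$. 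Applying $\on{ladj}(f_i)$ and using the triangle identity (so that $\on{ladj}(f_i)(u_V)$ is an equivalence, since $\on{ladj}(f_i)\circ f_i\simeq \on{id}$) shows $\on{ladj}(f_i)(K(V))\simeq 0$, i.e.\ $K(V)\in \on{fib}(g_{i+1})$.

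For $n=2$, the identity $\on{fib}(\on{ladj}(f_i))=\on{fib}(g_{i+1})=\on{fib}(g_{i-1})$ is precisely the $4$-periodicity of the two semiorthogonal decompositions $(\on{Im}(f_1),\on{fib}(g_2))$ and $(\on{Im}(f_2),\on{fib}(g_1))$ in the sense of \cite{HLS16,DKSS19}, so sphericality of the gluing functor follows from \cite[Prop.~2.5.12]{DKSS19} by the same bookkeeping already appearing in the $n=2$ case of the proof of \Cref{prop:ngonschober}.

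For $n\geq 3$, I would identify the gluing functor $\phi\colon \on{Im}(f_i)\to \on{fib}(g_{i+1})$ with the restriction of the reflection $L\colon \mathcal{V}^n\to \on{fib}(g_{i+1})$, given by $L(V)=\on{cof}(f_{i+1}g_{i+1}(V)\to V)$; this is indeed a reflection, since $\on{Map}(f_{i+1}g_{i+1}V, W)\simeq \on{Map}(g_{i+1}V, g_{i+1}W)\simeq 0$ for $W\in\on{fib}(g_{i+1})$. For $A_1,A_2\in \on{Im}(f_i)$ the adjunction yields $\on{Map}(\phi A_1,\phi A_2)\simeq \on{Map}(A_1,\phi A_2)$, and the fiber sequence $f_{i+1}g_{i+1}A_2\to A_2\to \phi(A_2)$ reduces full faithfulness of $\phi$ to showing $\on{Map}_{\mathcal{V}^n}(A_1,f_{i+1}g_{i+1}A_2)\simeq 0$. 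Writing $A_1=f_i(X_1)$ and applying $f_i\dashv g_i$, this becomes $\on{Map}(X_1, g_if_{i+1}g_{i+1}A_2)$, which vanishes because condition~3 of \Cref{def:schberngon} gives $g_i\circ f_{i+1}\simeq 0$ whenever $i+1\neq i-1\bmod n$, i.e.\ exactly when $n\neq 2$. The main obstacle is conceptual rather than computational: one must identify the intended gluing functor as the reflection $\phi$ to $\on{fib}(g_{i+1})$ rather than the coreflection via $f_ig_i$ (which goes the other way), since only in this direction is the failure of full faithfulness controlled by the composite $g_i\circ f_{i+1}$; once that identification is made, the proof reduces to a single application each of conditions~3 and~5.
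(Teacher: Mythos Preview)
Your proof is correct and essentially matches the paper's approach: the semiorthogonal decomposition comes from condition~5 of \Cref{def:schberngon}, and the $n=2$ case from $4$-periodicity and \cite{DKSS19}, just as in the paper. For $n\geq 3$ the paper identifies the gluing functor as $\on{ladj}(\iota)\circ f_i$ (your $L\circ f_i$) and deduces full faithfulness by observing this is the left adjoint of $g_i|_{\on{fib}(g_{i+1})}$, hence fully faithful by \Cref{prop:ngonschober}; your direct verification via $g_i\circ f_{i+1}\simeq 0$ simply inlines the relevant step of that proposition's proof.
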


\begin{proof}
The semiorthogonal decomposition follows from $\on{fib}(F_{i})=\on{fib}(\on{radj}(G_{i+1}))$, see \cite[Prop.~2.3.2]{DKSS19}. There is similarly a semiorthogonal decomposition $(\on{Im}(G_i),\on{fib}(F_i))$ of $\mathcal{V}^n$. Hence, $(\on{fib}(F_{i}),\on{Im}(G_{i+1}))$ is Cartesian by \cite[Prop.~2.3.2]{DKSS19}.
In the case $n=2$, the semiorthogonal decomposition is $4$-periodic, as shown in the proof of \Cref{prop:ngonschober}, so that the gluing functor is spherical by \cite[Prop.~2.5.5]{DKSS19}. We next consider the case $n\geq 3$. Denote by $\iota$ the fully faithful functor $\on{fib}(F_{i})\to \mathcal{V}^n$.
The gluing functor $\mathcal{N}_{i+1}\xrightarrow{G_{i+1}} \mathcal{V}^n\xrightarrow{\on{radj}(\iota)}\on{fib}(F_{i})$ is the right adjoint of $F_{i+1}|_{\on{fib}(F_{i})}$, and thus fully faithful by \Cref{prop:ngonschober}.
\end{proof}

\subsection{Parametrized perverse schobers}\label{sec:pps}

In this section we recall the definition of a perverse schober parametrized by a ribbon graph in terms of certain functors from the exit path category of the graph, as well as the $\infty$-categories of global sections and lax sections.  

\begin{definition}
A \emph{ribbon graph} consists of a graph $\rgraph$ together with a choice of a cyclic order on the set of halfedges incident to $v$ for each $v\in\rgraph_0$.
\end{definition}

\begin{definition}
Let $\rgraph$ be a ribbon graph. The \textit{exit path category} $\on{Exit}(\rgraph)$ is the $1$-category with
\begin{itemize}
\item objects the vertices and edges of $\rgraph$ and
\item a morphism $v\xrightarrow{a} e$ for each halfedge $a$ of $\rgraph$ which is part of an edge $e$ incident to a vertex $v$ of $\rgraph$. All other morphisms are identities.
\end{itemize}
We do not distinguish in notation between $\on{Exit}(\rgraph)$ and its simplicial nerve $N(\on{Exit}(\rgraph))\in \on{Set}_\Delta$.
\end{definition}

Let $v$ be a vertex of valency $n$ of a ribbon graph $\rgraph$. Let $\on{Exit}(\rgraph)_{v/}$ be the undercategory, which has $n+1$ objects, which can be identified with $v$ and its $n$ incident halfedges and non-identity morphisms going from $v$ to these halfedges. There is a functor $\on{Exit}(\rgraph)_{v/}\to \on{Exit}(\rgraph)$, which is fully faithful if $\Gamma$ has no loops incident to $v$.

\begin{definition}\label{def:schober}
Let $\rgraph$ be a ribbon graph. A functor $\hF\colon \on{Exit}(\rgraph)\rightarrow \on{LinCat}_k$ is called a \textit{$\rgraph$-parametrized ($k$-linear) perverse schober} if for each vertex $v$ of $\rgraph$, the restriction of $\hF$ to $\on{Exit}(\rgraph)_{v/}$ determines a perverse schober parametrized by the $n$-spider in the sense of \Cref{def:schberngon}. In this case, we call the spherical functor obtained from repeated application of \Cref{prop:ngonschober} the spherical functor\footnote{The resulting spherical functor does not depend on the choices of integers $j$ in the process, up to composition with equivalences.} describing the perverse schober $\hF$ at $v$. We call $v$ a \textit{singularity} of $\hF$ if this spherical functor is non-trivial, i.e.\ its domain is not equivalent to the zero $\infty$-category.
\end{definition}

Given a connected ribbon graph $\rgraph$, a $\rgraph$-parametrized perverse schober $\mathcal{F}$ assigns to each edge of $\rgraph$ an equivalent stable $\infty$-category, referred to as the generic stalk of $\mathcal{F}$.

\begin{remark}
\Cref{prop:localmodelforschobers} implies that \Cref{def:schober} is equivalent to the definition of parametrized perverse schober appearing in Definition 4.14 of \cite{Chr22}.
\end{remark}

\begin{definition}\label{def:sections}
Let $\hF$ be a $\rgraph$-parametrized perverse schober.
\begin{itemize}
    \item We denote by $\glsec(\rgraph,\hF)=\on{lim}(\hF)$ the limit of $\hF$ in the $\infty$-category $\on{LinCat}_k$ of $k$-linear $\infty$-categories. We call $\glsec(\rgraph,\hF)$ the $\infty$-\textit{category of global sections} of $\hF$. Recall that $\glsec(\rgraph,\hF)$ can be identified with the $\infty$-category of coCartesian sections of the Grothendieck construction of $\hF$.
    \item We denote by $\losec(\rgraph,\hF)$ the $\infty$-category of all sections of the Grothendieck construction of $\hF$. We call $\losec(\rgraph,\hF)$ the $\infty$-\textit{category of lax sections} of $\hF$. Note that $\losec(\rgraph,\hF)$ describes the $(\infty,2)$-categorical lax limit of $\hF$, and thus has no immediate analog in classical sheaf theory. We further remark that $\losec(\rgraph,\hF)$ is stable and admits a canonical $k$-linear structure.
\end{itemize}
Note that we have a fully faithful $k$-linear functor $\glsec(\rgraph,\hF)\subset \losec(\rgraph,\hF)$.
\end{definition}

The local rotational ``symmetry'' of a perverse schober at a vertex is captured by composition with the equivalence $T_{\hF(v)}$ defined in the following proposition. This is in general not a genuine symmetry, as a full rotation does not return the perverse schober to itself; the change depends on the ``monodromy'' around the singularity given by the suspension of the cotwist functor of the corresponding spherical adjunction. We will refer to this rotational action as the paracyclic symmetry, see also \cite{Chr22} for background.

\begin{proposition}[{\cite[Lem.~3.8]{Chr22}}]\label{prop:twistv}
Let $\hF$ be a $\rgraph$-parametrized perverse schober and $v\in \rgraph_0$ a vertex with incident halfedges $a_1,\dots,a_n$ lying in edges $e_1,\dots,e_n\in \rgraph_1$. The functor
\[F_v'\coloneqq \prod_{i=1}^n\hF(v\xrightarrow{a_i}e_i)\colon \hF(v)\longrightarrow \prod_{i=1}^n\hF(e_i)\]
is spherical. Let $G_v'$ be its right adjoint. We denote the twist functor of the adjunction $F_v'\dashv G_v'$ by $T_{\hF(v)}$.
\end{proposition}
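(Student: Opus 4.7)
The plan is to reduce to the local model from \Cref{prop:ngonschober2} and then compute the twist and cotwist functors directly. By \Cref{rem:localmodelforschobers}, we may identify the perverse schober data at $v$, up to equivalence, with the local model $\hF_n(F)$ arising from a spherical functor $F\colon \mathcal{V} \to \mathcal{N}$. Under this identification, $\hF(v) \simeq \mathcal{V}^n_F$, each $\hF(e_i) \simeq \mathcal{N}$, and the tuple of structure functors $F_v'$ corresponds to $(\varrho_1, \ldots, \varrho_n)\colon \mathcal{V}^n_F \to \mathcal{N}^n$.

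Once in this model, the right adjoint $G_v'$ is easy to produce: since each $\varrho_i$ is an iterated left adjoint of $\pi_{n-1}$, it preserves all colimits, so the product functor $F_v'$ does as well and therefore admits a right adjoint by presentability. Explicitly, $G_v'(x_1, \ldots, x_n) \simeq \bigoplus_{i=1}^n \varrho_i^R(x_i)$, where $\varrho_i^R$ denotes the right adjoint of $\varrho_i$.

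The main step is to verify that the twist functor $T_{\hF(v)} = \on{cof}(\on{id}_{\mathcal{V}^n_F} \to G_v' F_v')$ and the analogous cotwist functor $\on{fib}(F_v' G_v' \to \on{id}_{\mathcal{N}^n})$ are equivalences. For this, one computes the components of the unit $X \to G_v' F_v'(X)$ on a diagram $X = (a \to b_1 \to \cdots \to b_{n-1})$ explicitly using $F$ and its adjoints, and then assembles the resulting cofiber sequences via iterated application of the octahedral axiom. The upshot is that $T_{\hF(v)}(X)$ is identified with an explicit paracyclic rotation of $X$ whose action on the $\mathcal{V}$-coordinate $a$ involves the twist $T_\mathcal{V}$ of the adjunction $F \dashv F^R$; since $T_\mathcal{V}$ is an equivalence by the sphericity of $F$, so is $T_{\hF(v)}$, and the dual argument handles the cotwist. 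The main obstacle is the combinatorial bookkeeping required to organize the iterated adjoints $\varrho_j \circ \varrho_i^R$ and their associated fiber/cofiber sequences in $\mathcal{V}^n_F$; the guiding principle is the natural $\mathbb{Z}/n$-rotation symmetry of the $n$-spider (up to monodromy given by the twist of $F$), which cyclically permutes the $\varrho_i$ and is ultimately what forces $F_v'$ to be spherical.
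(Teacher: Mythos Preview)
The paper does not supply a proof of this proposition; it is simply imported from \cite[Lemma~3.8]{Chr22}. Your strategy --- reducing via \Cref{rem:localmodelforschobers} to the explicit local model $\hF_n(F)$ and then computing the twist and cotwist of $(\varrho_1,\dots,\varrho_n)$ directly, guided by the paracyclic rotation symmetry --- is the natural one and is essentially what the cited reference carries out. One small point: the reduction in \Cref{rem:localmodelforschobers} only recovers the functors $\hF(v\to e_i)$ up to postcomposition with equivalences of the generic stalk, but this is harmless since composing a functor with an equivalence on the target does not affect sphericality.

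That said, your proposal is explicitly an outline rather than a proof: you acknowledge that the ``combinatorial bookkeeping'' of the iterated adjoints $\varrho_j\circ\varrho_i^R$ and the resulting fiber/cofiber sequences has been deferred, and this is exactly where all the content lies. The heuristic that the twist should be the paracyclic rotation (with monodromy governed by $T_{\mathcal V}$) is correct, but turning it into an actual identification of $T_{\hF(v)}$ with an autoequivalence requires writing down the unit $X\to G_v'F_v'(X)$ componentwise and tracking the cofibers, which you have not done.
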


\subsection{Equivalences from contractions of ribbon graphs}\label{sec:EfromC}

To complement the discussion in the previous section, we recall here the fact that one can transport perverse schobers along certain contractions of ribbon graphs in a way that preserves the $\infty$-category of global sections, up to equivalence.

Let $\rgraph$ be a ribbon graph and $e\in \rgraph_1^\circ$ an internal edge which is not a loop. By contracting the edge $e$, we obtain a new ribbon graph $\rgraph'$, which we call the \textit{contraction} of $\rgraph$ at the edge $e$. For a more formal description of this procedure, see \cite[Def.~4.24]{Chr22}. Given two ribbon graphs $\rgraph$ and $\rgraph'$, we write $c\colon \rgraph\rightarrow \rgraph'$ if $\rgraph'$ can be obtained from $\rgraph$ by contracting finitely many edges, and call $c$ a contraction of ribbon graphs.

\begin{lemma}\label{lem:contr}
Let $c\colon \rgraph\rightarrow \rgraph'$ be a contraction of ribbon graphs.
\begin{enumerate}[label=(\arabic*)]
\item Let $\hF$ be a $\rgraph$-parametrized perverse schober and assume that $c$ contracts no edges incident to two singularities of $\hF$. Then there exists a canonical $\rgraph'$-parametrized perverse schober $c_*(\hF)$ together with an equivalence of $\infty$-categories $c_*\colon \glsec(\rgraph,\hF)\simeq \glsec(\rgraph',c_*(\hF))$.
\item Let $\hF'$ be a $\rgraph'$-parametrized perverse schober. For each choice of subset $S\subset \rgraph_0$, such that $c|_{S}\colon S\rightarrow \rgraph_0'$ defines a bijection between $S$ and the singularities of $\hF'$, there exists a canonical $\rgraph$-parametrized perverse schober $c^*(\hF')$ together with an equivalence of $\infty$-categories $c^*\colon \glsec(\rgraph',\hF')\simeq \glsec(\rgraph,c^*(\hF'))$.
\end{enumerate}
\end{lemma}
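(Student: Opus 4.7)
The plan is to proceed by induction on the number of edges contracted, so it suffices to treat the case of a single elementary contraction $c\colon\rgraph\to\rgraph'$ collapsing a non-loop internal edge $e$ between distinct vertices $v_1,v_2$.

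For part (1), the hypothesis allows us to take $v_2$ to be non-singular, so the spherical functor describing $\hF$ at $v_2$ has zero domain. By \Cref{rem:localmodelforschobers}, the diagram $\hF$ restricted to $\on{Exit}(\rgraph)_{v_2/}$ is then equivalent to the local model $\hF_{n_2}(0\to\mathcal{N})$ of \Cref{prop:ngonschober2}, where $n_2$ is the valency of $v_2$ and $\mathcal{N}$ is the generic stalk along the edges meeting $v_2$. I would then define $c_*\hF$ on $\rgraph'$ by assigning to the merged vertex $v$, of valency $n_1+n_2-2$, the local perverse schober produced by \Cref{prop:ngonschober2} from the spherical functor describing $\hF$ at $v_1$, with cyclic order on the halfedges induced from the ribbon structure after contraction; on every other vertex and edge, $c_*\hF$ is set equal to $\hF$. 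That this indeed defines a parametrized perverse schober in the sense of \Cref{def:schober} follows once more from \Cref{rem:localmodelforschobers}.

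The equivalence $c_*\colon\glsec(\rgraph,\hF)\simeq\glsec(\rgraph',c_*\hF)$ is then the content of the limit presentation of global sections via coCartesian sections from \Cref{def:sections}. The candidate inverse is the functor extending a coCartesian section on $\rgraph'$ to one on $\rgraph$. Essential surjectivity follows from the fact that the value of any coCartesian section on $v_2$ and on its halfedges is canonically determined by its value on a single incident edge, by the local model at $v_2$, so the extension is a contractible choice; meanwhile the coCartesian constraints at $v_1$ combined with the trivial ones at $v_2$ and $e$ assemble precisely into the coCartesian constraint at the merged vertex $v$ by the construction above. Full faithfulness on mapping spaces follows by the same local argument.

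For part (2), I would invert the recipe of part (1): given the prescribed bijection $c|_S\colon S\simeq\rgraph_0'$, place at each $w\in S$ the spherical functor describing $\hF'$ at $c(w)$, and at every vertex outside $S$ the trivial local schober $\hF_n(0\to\mathcal{N})$ with generic stalk dictated by an adjacent edge. By construction $c_*c^*\hF'\simeq\hF'$, and the equivalence $c^*\colon\glsec(\rgraph',\hF')\simeq\glsec(\rgraph,c^*\hF')$ then follows from part (1) applied to $c^*\hF'$. The main technical difficulty throughout is the compatibility of the cyclic/paracyclic data when amalgamating halfedges at the merged vertex; concretely, one must verify that the result is independent, up to canonical equivalence, of the rooting indices entering \Cref{prop:ngonschober2} at $v_1$ and $v_2$. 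This is guaranteed by the invertibility of the twist and cotwist functors (\Cref{prop:twistv}) together with the fact that at the non-singular vertex $v_2$ the cotwist acts by a shift, so the local identifications across the contracted edge $e$ glue consistently.
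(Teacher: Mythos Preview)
Your approach is essentially the same as the paper's: reduce to a single edge contraction, place the singularity at one of the two endpoints, use the local model $\hF_n(F)$ of \Cref{prop:ngonschober2} at the merged vertex, and deduce part (2) from part (1) applied to $c^*\hF'$. The paper is terser: it delegates part (1) entirely to \cite[Section~4.4]{Chr22}, and for part (2) writes down the explicit local diagrams $\mathcal{V}^{m_1}_{0_{\mathcal{N}}}$, $\mathcal{V}^{m_2}_{F_v}$ and invokes \cite[Lemma~4.26]{Chr22} for $c_*c^*\hF'\simeq\hF'$.

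One imprecision worth flagging in your part (1) sketch: you write that the value of a coCartesian section at $v_2$ is ``canonically determined by its value on a single incident edge''. That is not how the equivalence is produced. Rather, the value at the merged vertex $v$ lives in $\mathcal{V}^{n}_{F}$, which admits an iterated semiorthogonal decomposition (via \Cref{lem:SOD}) splitting it into pieces corresponding to $\hF(v_1)$, $\hF(e)$, and $\hF(v_2)$; this is what allows one to go back and forth between sections on $\rgraph'$ and on $\rgraph$. The rigorous version of this is exactly what \cite[Section~4.4]{Chr22} establishes (via a right Kan extension / finality argument on the exit path categories), and your description of ``extension is a contractible choice'' followed by ``full faithfulness by the same local argument'' stands in for that but does not supply it.
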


\begin{proof}
Part (1) was shown in \cite[Prop.~4.28]{Chr22}.

For part (2), it suffices to treat the case that $c$ contracts a single edge $e$ of $\rgraph$ incident to vertices $v_1,v_2$, whose image in $\rgraph'$ is denoted $v$. Swapping the labels of $v_1$ and $v_2$ if necessary, we may assume that $S=\{v_2\}$. Let $m_i$ be the valency of $v_i$, $i=1,2$, and $n=m_1+m_2-2$ the valency of $v$. Replacing $\hF'$ by an equivalent $\rgraph'$-parametrized perverse schober, we may assume that near $v$, $\hF'$ is given by the following diagram, with $F_v'\colon \mathcal{V}\to \mathcal{N}$ the spherical functor underlying $\hF'$ at $v$, see also \Cref{prop:localmodelforschobers}:
\[
\begin{tikzcd}
\mathcal{N} &                                                                                                                             & \mathcal{N} \\
\dots       & \mathcal{V}^n_{F_v'} \arrow[ld, "\varrho_{m_1-1}"'] \arrow[rd, "\varrho_{m_1}"] \arrow[ru, "\varrho_{n}"'] \arrow[lu, "\varrho_{1}"'] & \dots       \\
\mathcal{N} &                                                                                                                             & \mathcal{N}
\end{tikzcd}
\]
We define $c^*(\hF')$ to be identical to $\hF'$ away from $v_1,v_2$ and near $v_1,v_2$ as the following diagram,
\[
\begin{tikzcd}
\mathcal{N} &                                                                                                    &             &                                                                                                             & \mathcal{N} \\
\dots       & \mathcal{V}^{m_1}_{0_{\mathcal{N}}} \arrow[r, "\varrho_{m_1}"] \arrow[lu, "\varrho_1"] \arrow[ld, "\varrho_{m_1-1}"] & \mathcal{N} & \mathcal{V}^{m_2}_{F_v'} \arrow[rd, "\varrho_2"] \arrow[ru, "\varrho_{m_2}"] \arrow[l, "{\varrho_1}"] & \dots       \\
\mathcal{N} &                                                                                                    &             &                                                                                                             & \mathcal{N}
\end{tikzcd}
\]
where we denote by $0_{\mathcal{N}}$ the spherical functor $0\colon 0\rightarrow \mathcal{V}_v$. By \cite[Lem.~4.26]{Chr22}, we then get $c_*c^*(\hF')\simeq \hF'$. The equivalence $\glsec(\rgraph',\hF')\simeq \glsec(\rgraph,c^*(\hF'))$ in part (2) thus follows from part (1).
\end{proof}

\begin{remark}
We remark that the construction of the perverse schober $c^*(\hF')$ in the proof of \Cref{lem:contr}, though involving choices, leads to a uniquely determined perverse schober up to equivalence.  Furthermore, we have $c^*c_*(\hF)\simeq \hF$ and $c_*c^*(\hF')\simeq \hF'$, and we expect $c_*$ and $c^*$ to define inverse equivalences between the $\infty$-categories, or even $(\infty,2)$-categories, of $\rgraph$- and $\rgraph'$-parametrized perverse schobers.
\end{remark}

\section{Arc system kits for perverse schobers}\label{sec:kits}
For the entire section, we fix a weighted marked surface $\sow$, with a mixed-angulation $\AS$. We denote by $\Sgh=\dAS$ the dual S-graph of the mixed-angulation.

In \Cref{sec:EfromF}, we introduce the setup of this section, based on perverse schobers parametrized by \emph{extended S-graphs} and the effect of flips of S-graph on perverse schobers. In \Cref{sec:arcsystemkit}, we introduce the notion of an arc system kit for a perverse schober parametrized by such an extended S-graph. In \Cref{sec:objfromarcs,sec:Hom-Int}, we describe how to associate to each graded arc a global section of a perverse schober equipped with an arc system kit, and study their properties. In the final \Cref{subsec:flipsrevisited} we describe how arc system kits can be transported along flips of the S-graph.
Combining the different results of this section, we obtain the main result \Cref{thm:tilt=flip}, stating that for positive arc system kits, flips of an $S$-graph induce a simple tilt on the corresponding simple-minded collections.

\def\edge{\eta}
\subsection{Equivalences from flips of S-graphs}\label{sec:EfromF}
In this section we explain how to realize flips of S-graphs in terms of zig-zags of edge contractions, thereby allowing perverse schobers to be transported across such flips in a way that preserves their $\infty$-categories of global sections, up to equivalence.

\begin{definition}\label{def:extdgraph}
The \textit{extended graph} $\eS$ of an S-graph $\Sgh$ is obtained by adding an external edge to $\Sgh$ at each boundary vertex. This edge is placed at the final position in the total order of the halfedges at the boundary vertex, which induces a compatible cyclic order of the halfedges. Hence, we consider $\eS$ as a ribbon graph.
\end{definition}

We refer to the halfedges and edges of $\eS$ which are not part of $\Sgh$ as \textit{virtual}. These edges are not embedded in $\sow$. The primary reason for their introduction is rather technical: the diagrams we wish to study define $\eS$-parametrized perverse schobers, but not $\Sgh$-parametrized perverse schobers. The other halfedges and edges of $\eS$ are called \textit{non-virtual} and can be identified with the halfedges of edges of $\Sgh$. We also identify the vertices of $\eS$ and $\Sgh$, and use this when referring to vertices of $\eS$ as internal or boundary vertices.

\begin{remark}\label{rem:flip}
We can describe the effect of a forward flip of an S-graph $\Sgh$ at an edge $\edge$ on the
associated ribbon graph $\eS$ in terms of a zig-zag of contractions of ribbon graphs. Consider first the case that $\edge$ is not incident to a singular point of weight $-1$. In this case, for each halfedge whose incident
vertex changes under the flip (there are either 0, 1 or 2 of these), we once include the span of contractions which is near the flipped edge e of the form depicted in Figure~\ref{fig:contraction1}.
\begin{figure}[ht]
\makebox[\textwidth][c]{
\begin{tikzpicture}[scale=.7,xscale=-1,arrow/.style={->,>=stealth}]
  \draw[red,thick](-1,0) edge (-1,-2) edge (-3,0) to
                  (1,0) edge (1,2) edge (1,-2) edge (3,0)
                  (1,0) to (-1,2)
                  (1,0) node[above left]{$\cdots$}\ww(-1,0) node[below right]{$\cdots$}\ww;
\begin{scope}[xscale=1,shift={(8,0)}]
  \draw[red,thick](-1,0)edge (-1,-2) edge (-3,0) to
                  (1,0) edge (1,2) edge (1,-2) edge (3,0)
                  (0,0)\ww to (-1,2)
                  (1,0) node[above left]{$\cdots$}\ww(-1,0) node[below right]{$\cdots$}\ww;
\end{scope}
\begin{scope}[xscale=1,shift={(16,0)}]
  \draw[red,thick](-1,0) edge (-1,2) edge (-1,-2) edge (-3,0) to
                  (1,0)  edge (1,2) edge (1,-2) edge (3,0)
                  (1,0) node[above left]{$\cdots$}\ww(-1,0) node[below right]{$\cdots$}\ww;
\end{scope}
\draw[Emerald,ultra thick, opacity=.9,arrow](11.5,0)to(12.5,0);
\draw[Emerald,ultra thick, opacity=.9,arrow](4.5,0)to(3.5,0);
\end{tikzpicture}}
\caption{A span of contractions moving a halfedge, describing part of the forward flip at a normal arc.}
\label{fig:contraction1}
\end{figure}

In the case that $\edge$ is incident to a single singular point of weight $-1$ (i.e.\ degree $1$), we instead use the zig-zag of contractions depicted in Figure~\ref{fig:contraction2}.

\begin{figure}[ht]
\makebox[\textwidth][c]{
\begin{tikzpicture}[scale=.6,arrow/.style={<-,>=stealth}]
  \draw[red,thick](0,1)to[bend left=0](0,-3)\ww  (-3,2)\ww to (0,1)\ww to(3,2)\ww;
  \draw[red,thick,font=\scriptsize](0,-1)\ww node[left]{} to (0,1)\ww node[above]{$\cdots$};
\begin{scope}[xscale=1,shift={(10,0)}]
  \draw[red,thick](0,1)to[bend left=45](0,-3)\ww  (-3,2)\ww to (0,1)\ww to(3,2)\ww;
  \draw[red,thick,font=\scriptsize](0,-1)\ww node[left]{$1$}  to (0,1)\ww node[above]{$\cdots$};
\end{scope}
\begin{scope}[xscale=1,shift={(-10,0)}]
  \draw[red,thick](0,1)to[bend left=-45](0,-3)\ww  (-3,2)\ww to (0,1)\ww to(3,2)\ww;
  \draw[red,thick,font=\scriptsize](0,-1)\ww node[right]{$1$} to (0,1)\ww node[above]{$\cdots$};
\end{scope}
\begin{scope}[xscale=1,shift={(-5,5)}]
  \draw[red,thick,font=\scriptsize](0,-1)\ww node[right]{} to (0,1)\ww node[above]{$\cdots$};
  \draw[red,thick](0,0)\ww to[bend left=-60](0,-3)\ww (-3,2)\ww to (0,1)\ww to(3,2)\ww;
\end{scope}
\begin{scope}[xscale=1,shift={(5,5)}]
  \draw[red,thick,font=\scriptsize](0,-1)\ww node[right]{} to (0,1)\ww node[above]{$\cdots$};
  \draw[red,thick](0,0)\ww to[bend left=60](0,-3)\ww  (-3,2)\ww to (0,1)\ww to(3,2)\ww;
\end{scope}
\draw[Emerald,ultra thick, opacity=.9,arrow](2,3)to(4,4);
\draw[Emerald,ultra thick, opacity=.9,arrow](-2,3)to(-4,4);
\draw[Emerald,ultra thick, opacity=.9,arrow](9,2.5)to(7,4);
\draw[Emerald,ultra thick, opacity=.9,arrow](-9,2.5)to(-7,4);
\end{tikzpicture}}
\caption{The zig-zag of contractions describes a forward flip at a monogon arc.}
\label{fig:contraction2}
\end{figure}
\end{remark}

\begin{proposition}\label{prop:flipeq}
Let $\hF$ be an $\eS$-parametrized perverse schober and $\edge\in \eS_1$ a non-virtual edge.
Denote by $\Sgh^\sharp$ the forward flip of $\Sgh$ at $\edge$.
There exists a canonical $\eS^\sharp$-parametrized perverse schober $\hF^\sharp$ together with an equivalence of $\infty$-categories $\Gamma(\eS,\hF)\simeq \Gamma(\eS^\sharp,\hF^\sharp)$.

A similar statement holds for backward flips.
\end{proposition}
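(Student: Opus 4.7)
The plan is to realize the forward flip at the level of the ribbon graph as a zig-zag of contractions, following the explicit description in Remark~\ref{rem:flip}, and then transfer the perverse schober $\hF$ along this zig-zag by repeated application of Lemma~\ref{lem:contr}. The output $\hF^\sharp$ will be obtained by successively pulling back along contractions pointing away from $\eS$, and pushing forward along contractions pointing toward $\eS^\sharp$. The equivalence of global sections is then the composite of the equivalences produced at each step.

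In more detail, in the normal case where $\edge$ is not incident to a degree-$1$ singular point, each halfedge that changes incidence under the flip is handled by the span of contractions $\eS \xleftarrow{c_1} \rgraph \xrightarrow{c_2} \eS^\sharp$ depicted in Figure~\ref{fig:contraction1}, where $\rgraph$ is obtained by inserting an extra $2$-valent vertex along an edge near $\edge$. First I would apply Lemma~\ref{lem:contr}(2) to pull $\hF$ back along $c_1$, choosing $S\subset \rgraph_0$ to be the preimage of the vertex set of $\eS$, so that the newly introduced $2$-valent vertex is non-singular (its spherical functor is the trivial one $0 \to \mathcal{D}(k)$). Because this intermediate vertex is non-singular, the edge contracted by $c_2$ is not incident to two singularities of the resulting schober on $\rgraph$, so Lemma~\ref{lem:contr}(1) applies and produces a canonical $\eS^\sharp$-parametrized perverse schober. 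Composing the two equivalences gives the desired equivalence on global sections. When two halfedges move simultaneously, the two spans act on disjoint local pieces of the ribbon graph and the constructions of Lemma~\ref{lem:contr} are local near the contracted edges, so they commute up to canonical equivalence.

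The monogon case is handled analogously: the longer zig-zag from Figure~\ref{fig:contraction2} is a concatenation of four contractions, each of which either has contracted edge adjacent to a $2$-valent non-singular intermediate vertex (so that Lemma~\ref{lem:contr}(1) applies), or comes equipped with an explicit choice of $S$ placing the singularity on the intended vertex (so that Lemma~\ref{lem:contr}(2) applies). The backward flip is obtained by reversing the roles of $\eS$ and $\eS^\sharp$ in the same zig-zags; this is consistent because the spans admit natural descriptions from either end, and the equivalences of Lemma~\ref{lem:contr} are themselves invertible.

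The main point to verify, and the principal obstacle, is that the hypotheses of Lemma~\ref{lem:contr} are satisfied at every step of each zig-zag, both in the normal and the monogon cases. Concretely, one must check that every contracted edge along a pushforward is incident to a $2$-valent intermediate vertex that has been arranged to be non-singular, and that the choices of subsets $S$ during the pullbacks are mutually compatible so that the composite functor lifts $\hF$ consistently. Once this bookkeeping is set up — guided by Figures~\ref{fig:contraction1} and \ref{fig:contraction2} — canonicity of $\hF^\sharp$ follows because all the constructions of Lemma~\ref{lem:contr} are local and the different spans associated with different moving halfedges do not interact.
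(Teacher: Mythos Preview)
Your proposal is correct and follows exactly the paper's approach: the paper's proof is the single line ``Combine Lemma~\ref{lem:contr} and Remark~\ref{rem:flip}'', and you have spelled out precisely this combination, with more care about which part of Lemma~\ref{lem:contr} applies at each step of the zig-zag. One small slip: the intermediate vertex inserted in Figure~\ref{fig:contraction1} is $3$-valent, not $2$-valent (it carries the moving halfedge in addition to the two halves of the subdivided edge), but this does not affect the argument since what matters is only that it is non-singular.
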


\begin{proof}
Combine \Cref{lem:contr} and \Cref{rem:flip}.
\end{proof}

\subsection{Arc system kits}\label{sec:arcsystemkit}
A well-known construction in the symplectic geometry of Lefschetz fibrations, originally due to Donaldson, and explained in detail in \cite{Sei08}, yields Lagrangian matching spheres (also known as matching cycles) using vanishing cycles and matching paths as input. A vanishing cycle refers to a Lagrangian sphere in the symplectic fiber of the Lefschetz fibration, which collapses to a point when parallel-transported along an arc in the base ending in a singular value of the fibration. If we can find two such singular values where the parallel transport of the Lagrangian sphere collapses, the union of the Lagrangian sphere and all its parallel transports in all fibers over the arc connecting the two singular values forms a Lagrangian sphere (of one dimension higher) in the total space of the fibration, referred to as the matching sphere (see Figure~\ref{fig:matchingsphere}). This construction has an analogue in the setting of perverse schobers: consider an edge $e$ of a ribbon graph \rgraph{} connecting the two vertices $v_1,v_2$. Let $\mathcal{F}$ be a \rgraph-parametrized perverse schober. The vanishing cycle corresponds to a choice of object $L_e\in \mathcal{F}(e)$. In a typical situation (see for instance the perverse schobers considered in \cite{Chr22,Chr21b}), $L_e$ would be a spherical object, meaning that the functor $\mhyphen\otimes L_e\colon \D(k)\to \mathcal{F}(e)$ is a spherical functor. The categorical meaning of $L_e$ being a vanishing cycle that collapses at either end of the arc $e$ is that this spherical functor $\mhyphen\otimes L_e$ describes the perverse schober $\mathcal{F}$ at $v_1$ and $v_2$. There are then distinguished objects $L_{v_1,e}\in \mathcal{F}(v_1),L_{v_2,e}\in \mathcal{F}(v_2)$, satisfying that $\mathcal{F}(v_1\to e)(L_{v_1,e})\simeq L_e$ and $\mathcal{F}(v_2\to e)(L_{v_2,e})\simeq L_e$. Note that if $v_i$ is $1$-valent, then simply $L_{v_i,e}=k\in \D(k)=\mathcal{F}(v_i)$. We can consider $L_e,L_{v_1,e},L_{v_2,e}$ as compatible lax sections of the perverse schober, which thus glue together to a global section of $\mathcal{F}$, which is the analogue of the matching sphere.

\begin{figure}
\begin{tikzpicture}[scale=.4]
\begin{scope}[shift={(37,-7)},scale=2.7,arrow/.style={-stealth}]
\draw[thick,fill=gray!7](0,0) ellipse (5 and 1);
\draw[blue!60,ultra thick,fill=blue!11](0,4)circle(2);\draw[red,ultra thick] (-2,0)to(2,0);
\foreach \j in {0}{
    \begin{scope}[shift={(\j,0)}]
    \draw[gray,dashed,very thick](-2,4)\ww to(-2,0)\ww (2,4)\ww to(2,0)\ww;
    \begin{scope}[shift={(0,4)}]
    \draw[very thick,cyan,fill=teal!10,opacity=0.3](60:2).. controls +(-30:.5) and +(30:.5) ..(-60:2);
    \draw[thick,cyan,dashed,fill=teal!10,opacity=0.3](60:2).. controls +(150:.5) and +(-150:.5) ..(-60:2);
    \draw[thick,cyan,fill=cyan!10,opacity=0.3](105:2).. controls +(-15:.5) and +(-15:.5) ..(-105:2);
    \draw[thick,cyan,dashed,fill=cyan!10,opacity=0.3](105:2).. controls +(-165:.5) and +(165:.5) ..(-105:2);
    \draw[very thick,teal](60:2).. controls +(-30:.5) and +(30:.5) ..(-60:2);
    \draw[thick,teal,dashed](60:2).. controls +(150:.5) and +(-150:.5) ..(-60:2);
    \draw[thick,cyan](105:2).. controls +(-15:.5) and +(-15:.5) ..(-105:2);
    \draw[thick,cyan,dashed](105:2).. controls +(-165:.5) and +(165:.5) ..(-105:2);
    \draw[gray,dashed,thick]
        (-60:2)to(1,-4)node[teal]{\tiny{$\bullet$}}
        (-105:2)to(-0.258819*2,-4) node[cyan]{\tiny{$\bullet$}};
    \end{scope}
    \end{scope}}
\end{scope}
\end{tikzpicture}
\caption{Arc in the base surface and corresponding matching sphere in the total space of the fibration.}
\label{fig:matchingsphere}
\end{figure}

We next consider the generalized, categorical analogue of the (quite favorable) situation where the fibers of the Lefschetz fibration are each equipped with a choice of a vanishing cycle, these being related to each other via parallel transport, and such that the vanishing cycles collapse over every singular value of the fibration. In this situation, we can associate matching spheres with arbitrary graded arcs in the base surface of the fibration. The corresponding categorical data for a perverse schober, see \Cref{def:ask}, is referred to as an arc system kit. The additional condition v) in \Cref{def:ask} essentially ensures that there is only a single object (up to shift) taking the role of the vanishing cycle near each vertex, but furthermore states that the arc system kit has a local rotational symmetry similar to the rotational symmetry of the polygons.

When considering arc system kits for an $\eS$-parametrized perverse schober $\hF$, we will always assume that the set of singularities of $\hF$ consists exactly of the interior vertices of $\eS$. We fix such an $\eS$-parametrized perverse schober $\hF$.

\begin{definition}\label{def:ask}
An \emph{arc system kit} for $\hF$ consists of
\begin{enumerate}[label=\roman*)]
\item an object $L_e\in \hF(e)$ for each non-virtual edge $e$ of $\eS$,
\item an object $L_{v,a}\in \hF(v)$ for each vertex $v$ and non-virtual incident halfedge $a$ of $\eS$,
\item an equivalence in $\hF(e)$
\[ \hF(v\xrightarrow{b}e)(L_{v,a})\simeq \begin{cases} L_e & a=b\\ 0 & \text{else}\end{cases}\]
for each pair of non-virtual halfedges $a,b$ incident to a vertex $v$ and where $b$ is part of the edge $e$,
\item an equivalence in $\hF(c)$
\[ \hF(v\to c)(L_{v,b})\simeq \hF(v\to c)(L_{v,a}[1-d(a,b)])\,,\]
for each virtual edge $c$ of $\eS$ incident to a vertex $v$ of weight $\infty$ and consecutive non-virtual halfedges $a,b$  (i.e.\ $b$ follows $a$) incident to $v$, and
\item an equivalence in $\hF(v)$
\begin{equation}\label{eq:twistofLva} T_{\hF(v)}(L_{v,b})\simeq L_{v,a}[1-d(a,b)]\,,\end{equation}
for each internal vertex $v$ of $\eS$ and consecutive internal halfedges $a,b$ (i.e.\ $b$ follows a) incident to $v$. Here $T_{\hF(v)}$ denotes the twist functor from \Cref{prop:twistv}. If $v$ has valency $1$ with the single incident halfedge $a$, we instead require $T_{\hF(v)}(L_{v,a})\simeq L_{v,a}[1-d(v)]$ with $d(v)$ the degree of $v$.
\end{enumerate}
Note that in particular, if $v$ is $q$-valent of degree $m$, one has $T_{\hF(v)}^m(L_{v,a})\simeq L_{v,a}[q-m]$.
\end{definition}

The datum in an arc system kit satisfies a `local-to-global principle', meaning that it amounts to local arc system kits, one for each vertex of $\eS$ and its incident edges, plus the requirement that the data at the edges agrees. The construction below describes the default local arc system kits that we will use.

\begin{construction}\label{constr:arcsyskit}
\textbf{Case I:} interior vertices

Consider the disc with a single interior singular point $v$ of degree $m\geq q$ and $q$ singular points on the boundary. There is a mixed-angulation with dual graph $\Sgh$ having a single $q$-valent interior vertex $v$ and all interior edges emanating from $v$. We choose any total order on these $q$ interior edges which is compatible with their given cyclic order, labeling them by $e_1,\dots,e_q$. Given a spherical functor $F\colon \mathcal{V}\to \mathcal{N}$, we let $\hF_v(F)$ be the $\spider_{q}$-parametrized perverse schober obtained from $\hF_q(F)$, as in \Cref{prop:ngonschober2}, by composing $\hF_q(F)(v\to e_i)$ with $[d(a_1,a_i)-(i-1)]$ for all $1\leq i\leq q$. Note that if $q=m$, then $\hF_v(F)=\hF_q(F)$. Let $L_v\in \mathcal{V}$ be an object satisfying that $T_{\mathcal{V}}(L_v)\simeq L_v[1-m]$, with $T_{\mathcal{V}}$ the twist functor of $F\dashv G$. For instance, if $\mathcal{V}=\D(k)$, $L_v=k\in \D(k)$ and $F(k)\in \mathcal{N}$ is an $m$-spherical object, then $T_{\mathcal{V}}(L_v)\simeq L_v[1-m]$ holds. The data for an arc system kit for $\hF_v(F)$ at the vertex $v$ is obtained as follows.

We set $L_e=F(L_v)\in \mathcal{N}=\hF_v(F)(e)$ for all edges $e\in \eS_1$. Let $a_i$ be the $i$-th halfedge incident to $v$. We set  $L_{v,a_i}\in \mathcal{V}_{F}^q=\hF_v(F)(v)$ to be the diagram
\begin{equation}\label{eq:locLv}
\left(L_v\rightarrow F(L_v)\xrightarrow{\on{id}}F(L_v)\xrightarrow{\on{id}} \dots\xrightarrow{\on{id}} \underbrace{F(L_v)}_{q-i+1\text{-th}}\rightarrow 0\rightarrow \dots \rightarrow 0\right)[-d(a_1,a_i)]
\end{equation}
where the first morphism $L_v\to F(L_v)$ is coCartesian in the Grothendieck construction of $F$. Then we have
\[
\mathcal{F}_q(F)(v\to e_1)(L_{v,a_i})\simeq \varrho_1(L_{v,a_i})=\pi_{q-1}(L_{v,a_i})\simeq \begin{cases} F(L_v) & a=1 \\ 0 & a>1 \,. \end{cases}
\]
Let $2\leq j\leq n-1$. Using the equivalence $\varrho_j\simeq \on{fib}_{n-j,n-j+1}[j-1]$ from equation \eqref{eq:description_rho_functor}, we find
\[ \mathcal{F}_v(F)(v\to e_j)(L_{v,a_i})\simeq \varrho_j(L_{v,a_i})[d(a_1,a_j)-(j-1)]\simeq \on{fib}_{n-j,n-j+1}(L_{v,a_i})[d(a_1,a_j)]\,,\]
where the latter objects indicates by definition the ($[d(a_1,a_j)]$-shifted) fiber of the morphism between the $(n-j)$-th and $(n-j+1)$-th entries in the diagram \eqref{eq:locLv}. We thus find
\[
\mathcal{F}_v(F)(v\to e_j)(L_{v,a_i})\simeq \begin{cases}
\on{fib}(F(L_v)\xrightarrow{\on{id}} F(L_v))[d(a_1,a_j)-d(a_1,a_i)]\simeq 0 & j>i \\
\on{fib}(F(L_v)\to 0)\simeq F(L_v) & j=i \\
\on{fib}(0\to 0)\simeq 0 &j<i\,.
\end{cases}
\]
The last functor $\varrho_q$ corresponds to taking the relative fiber along the morphism in the Grothendieck construction, as in equation \eqref{eq:description_rho_functor}, meaning that
\begin{align*}
\mathcal{F}_v(F)(v\to e_q)(L_{v,a_i})& \simeq \varrho_q(L_{v,a_i})[d(a_1,a_q)-(q-1)](L_{v,a_i})\\
& \simeq \begin{cases} \on{fib}(F(L_v)\to F(L_v))[d(a_1,a_q)-d(a_1,a_i)]\simeq 0 & i<q \\ \on{fib}(F(L_v)\to 0)\simeq F(L_v) & i=q\,.\end{cases}
\end{align*}
This shows the equivalences as in iii) in the definition of an arc system kit. We next show the equivalence \eqref{eq:twistofLva} from v). Let $\mathcal{F}_v(F)(v\to e_i)^R$ be the right adjoint of $\mathcal{F}_v(F)(v\to e_i)$. We then have $\prod_{1\leq j\leq q} \mathcal{F}_v(F)(v\to e_j)^R \mathcal{F}_v(F)(v\to e_j) (L_{v,a_i})\simeq \mathcal{F}_v(F)(v\to e_i)^R(F(L_v))\in \mathcal{F}_v(F)(v)$. This object can be depicted for $i>1$ as
\[
\left(0\to \dots \to 0\to \underbrace{F(L_v)}_{(q-i+2)-th} \to 0\to \dots \to 0\right)[-d(a_1,a_i)+1]
\]
and for $i=1$ as
\[
\left( GF(L_v)\to F(L_v)\xrightarrow{\on{id}}\dots \xrightarrow{\on{id}}F(L_v)\right)\,,
\]
as follows from equation \eqref{eq:description_sigma_functor} and the defintion of $\mathcal{F}_v(F)$. For $i=1$, we can depict the resulting (vertical) cofiber sequence computing $T_{\mathcal{F}_v(F)(v)}(L_{v,a_i})\in \mathcal{F}_v(F)(v)$ as follows. Note that the cofiber is computed ``entrywise in the diagram''.
\[
\begin{tikzcd}
L_v \arrow[r] \arrow[d, "\on{unit}"'] & F(L_v) \arrow[r, "\on{id}"] \arrow[d, "\on{id}"] & \dots \arrow[r, "\on{id}"] & F(L_v) \arrow[d, "\on{id}"] \\
GF(L_v) \arrow[r] \arrow[d]           & F(L_v) \arrow[r, "\on{id}"] \arrow[d]            & \dots \arrow[r, "\on{id}"] & F(L_v) \arrow[d]            \\
T_{\mathcal{V}}(L_v) \arrow[r]                      & 0 \arrow[r]                                      & \dots \arrow[r]            & 0
\end{tikzcd}
\]
Since $T_{\mathcal{V}}(L_v)\simeq L_v[1-m]$ and $m=d(a_1,a_q)+d(a_q,a_1)$, we find $T_{\mathcal{F}_v(F)(v)}(L_{v,a_1})\simeq L_{v,a_{q}}[1-d(a_{q},a_1)]$. For $i>1$, we similarly see that $T_{\mathcal{F}_v(F)(v)}(L_{v,a_i})\simeq L_{v,a_{i-1}}[1-d(a_{i-1},a_i)]$.

\textbf{Case II:} boundary vertices

Consider the disc with $q$ singular points on the boundary and with a mixed-angulation whose dual S-graph $\Sgh$ consists of $q$ edges emanating from a single boundary vertex $v$. At $v$, the extended S-graph $\eS$ is described by the $(q+1)$-spider. The halfedges of the $(q+1)$-spider are equipped with the total order where the virtual halfedge is in the final position. Let $\hF$ be an $\eS$-parametrized perverse schober which restricts on the $(q+1)$-spider to the perverse schober $\hF_v(0)$ obtained from $\hF_q(0)$, with $0\colon 0\rightarrow \mathcal{N}$ the zero functor, by composing $\hF_q(0)(v\to e_i)$ with $[d(a_1,a_i)-(2i-1)]$ for $1\leq i\leq q$. Note that $\hF_v(0)(v)= \mathcal{V}_0^{q+1}\simeq \on{Fun}(\Delta^{q-1},\mathcal{N})$. Choose any object $L\in \mathcal{N}$. We set $L_e=L\in \hF(e)=\mathcal{N}$ for all non-virtual edges $e$ of $\eS$. We further set $L_{v,a_i}\in \mathcal{V}_0^{q+1}$ to be the object
\begin{equation}\label{eq:locLv2}
\left(L\xrightarrow{\on{id}}L\xrightarrow{\on{id}} \dots\xrightarrow{\on{id}} \underbrace{L}_{q-i+1\text{-th}}\rightarrow 0\rightarrow \dots \rightarrow 0\right)[i-d(a_1,a_i)]\,.
\end{equation}
A computation similar to the above again shows that these objects describe the data of an arc system kit for $\hF$ at $v$.
\end{construction}

In the following, we assume that $\hF$ is equipped with an arc system kit.

\begin{lemma}\label{lem:locend}
There are the following equivalences in $\mathcal{D}(k)$:
\begin{enumerate}[label=(\arabic*)]
\item Let $e,\eprime$ be two non-virtual edges of $\eS$. Then
    $\on{End}_{\hF(e)}(L_e)\simeq \on{End}_{\hF(\eprime)}(L_{\eprime})$.
\item Let $v$ be a vertex of $\eS$ with incident non-virtual halfedges $a,b$.
    Then $\on{End}_{\hF(v)}(L_{v,a})\simeq \on{End}_{\hF(v)}(L_{v,b})$.
\item Let $v$ be a boundary vertex of $\eS$ and let $a$ be a non-virtual halfedge incident to $v$ which is part of an edge $e$.
    Then $\on{End}_{\hF(v)}(L_{v,a})\simeq \on{End}_{\hF(e)}(L_e)$.
\end{enumerate}
\end{lemma}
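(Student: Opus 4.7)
My plan is to work in the explicit local model of $\hF$ at a vertex $v$, which by \Cref{rem:localmodelforschobers} I may assume is $\hF_v(F_v)$ for a spherical functor $F_v$ (with $F_v = 0\colon 0 \to \mathcal{N}$ when $v$ is a boundary vertex). In such a local model, axioms iii)--v) of an arc system kit pin down the objects $L_{v,a}$ up to equivalence to the form appearing in \Cref{constr:arcsyskit}, which reduces everything to explicit endomorphism computations.

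First, I would prove (3). At a boundary vertex $v$ of valency $q$, the local model gives $\hF(v) \simeq \on{Fun}(\Delta^{q-1}, \mathcal{N})$, with the functors $\hF(v \to e_i)$ identified with the projection $\pi_{q-1}$ and its iterated left adjoints. An inductive analysis using axiom iii) forces $L_{v,a_i}$ (up to shift) to be the constant-value-then-zero diagram \eqref{eq:locLv2} with value $L_{e_i}$, and a direct computation in the functor category gives $\on{End}(L_{v,a_i}) \simeq \on{End}_\mathcal{N}(L_{e_i}) \simeq \on{End}_{\hF(e_i)}(L_{e_i})$. Next, for (2) at an internal vertex I would apply axiom v) directly: the twist $T_{\hF(v)}$ is an equivalence by \Cref{prop:twistv}, so it preserves endomorphism objects, as do shifts. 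Hence $\on{End}(L_{v,a}) \simeq \on{End}(L_{v,b})$ for consecutive halfedges at $v$, and iteration along the cyclic order covers all pairs. For (2) at a boundary vertex, I would combine the local form of $L_{v,a_i}$ found in (3) with the shift relation from axiom iv) applied to the virtual edge at $v$.

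Finally, for (1) I would use connectedness of $\eS$ to reduce to the case that $e, e'$ share a common vertex $v$, with halfedges $a \in e$ and $b \in e'$. When $v$ is a boundary vertex, I would chain (2) and (3):
\[
\on{End}(L_e) \simeq \on{End}(L_{v,a}) \simeq \on{End}(L_{v,b}) \simeq \on{End}(L_{e'}).
\]
When $v$ is internal, I would work in $\hF_v(F_v)$ and exploit the paracyclic interpretation of $T_{\hF(v)}$: the twist rotates the projection functors $\varrho_i = \hF(v \to e_i)$ cyclically, up to canonical equivalence and a shift tied to $d(a_i, a_{i+1})$. Combining this compatibility with axiom v) yields $L_{e_{i+1}} \simeq L_{e_i}[1 - d(a_i, a_{i+1})]$, from which the endomorphism objects agree. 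The hardest step will be making precise the compatibility between $T_{\hF(v)}$ and the cyclic permutation of the $\varrho_i$ in the internal case of (1), which requires careful bookkeeping with the paracyclic structure of $\hF_v(F_v)$ as developed in \cite{Chr22}.
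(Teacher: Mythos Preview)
Your approach is correct and close to the paper's, but proceeds in reverse order and with more explicit case distinctions. The paper proves (1) first: for consecutive halfedges $a\in e$, $b\in h$ at a common vertex $v$, it invokes the relation $\hF(v\xrightarrow{a}e)\circ T_{\hF(v)}\simeq T'\circ \hF(v\xrightarrow{b}h)$ for an equivalence $T'$ (from \cite[Prop.~3.11]{Chr22}), then applies axiom v) to get $T'(L_h)\simeq L_e[1-d(a,b)]$; since $T'$ and shifts preserve endomorphism objects, (1) follows. Part (2) is then immediate from axiom v), and (3) is handled by a clean fiber argument: letting $b$ denote the virtual halfedge at $v$, the functor $\hF(v\xrightarrow{a}e)$ restricts to an equivalence $\on{fib}\bigl(\prod_{c\neq a,b}\hF(v\to e_c)\bigr)\simeq \hF(e)$, and $L_{v,a}$ lies in this fiber by axiom iii).

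Two points of comparison. First, your claim in (3) that axiom iii) forces $L_{v,a_i}$ to equal the explicit diagram \eqref{eq:locLv2} is a slight overreach: axiom iii) constrains only the non-virtual projections, so it determines $L_{v,a_i}$ only as an object of the fiber above, not as a specific diagram. The paper's argument sidesteps this by working at the level of that fiber equivalence rather than chasing an explicit form. Second, your separation of internal and boundary vertices in (1) and (2) is well placed: axiom v) is stated only for internal vertices, so the paper's uniform invocation of v) does not literally cover the boundary case. Your proposed fix---use (3) together with axiom iv) at the virtual edge to relate the various $L_e$'s up to shift---is the right idea; just be careful that your ordering of (1) and (2) at boundary vertices is not circular. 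The cleanest route is: the fiber equivalence from (3) and axiom iv) give (1) at a boundary vertex directly (since both $\hF(v\to e)$ and the virtual projection are equivalences on the fiber), and then (2) at boundary vertices follows from (3) and (1).
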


\begin{proof}
We begin by showing part (1). Since $\sow$ is assumed to be connected, it suffices to consider the case where $e,\eprime$ are incident to the same vertex $v$ with $a\in e$ and $b\in \eprime$ with $b$ the successor halfedge of $a$ at $v$. Let $T_{\hF(v)}$ be the twist functor of the adjunction $F_v'\dashv G_v'$ from \Cref{prop:twistv}. There exists an equivalence of functors
\[ \hF(v\xrightarrow{a}e)\circ T_{\hF(v)}\simeq T'\circ \hF(v\xrightarrow{b}\eprime)\]
with $T'\colon \hF(\eprime)\simeq \hF(e)$ some equivalence. This follows from the fact that, at $v$, $\hF$ is locally equivalent to a perverse schober on the $n$-spider $\hF_n(F)$ of the form described in \Cref{prop:ngonschober2}, see \Cref{prop:localmodelforschobers}. For $\hF_n$, one has $\hF_n(F)(v\xrightarrow{a}e)\circ T_{\hF_n(F)(v)}\simeq T''\circ \hF_n(F)(v\xrightarrow{b}\eprime)$ for some equivalence $T''$ by \cite[Prop.~3.11]{Chr22}. We thus find
\begin{align*}
T'(L_{\eprime})& \simeq T'\circ \hF(v\xrightarrow{b}\eprime)(L_{v,b}) \simeq \hF(v\xrightarrow{a}e)\circ T_{\hF(v)}(L_{v,b})\\
 & \simeq \hF(v\xrightarrow{a}e)(L_{v,a}[1-d(a,b)]) \simeq L_{e}[1-d(a,b)]\,.
\end{align*}
Since any equivalence, such as $T'[d(a,b)-1]$, preserves derived endomorphisms, part (1) follows.

Part (2) follows from part v) of \Cref{def:ask} by using that the autoequivalence $T_{\hF(v)}$ also preserves derived endomorphisms.

We conclude by showing part (3). Let $b$ be the virtual halfedge at $v$ with $v$ of valency $q+1$ in the extended S-graph. Let $a$ be the $i$-th halfedge at $v$ in the total order of halfedges where $b$ is final. Consider the functor
\[ \alpha\colon \hF(v)\xrightarrow{\prod_{c\neq a,b}\hF(v\xrightarrow{c}e_c)}\prod_{c\neq a,b} \hF(e_c)\,,\]
where the product runs over all halfedges $c\neq a,b$ at $v$ and $e_c$ denotes the corresponding edge of $\eS$. Since $v$ is a boundary vertex, the spherical functor describing $\hF$ at $v$ is given by $0\colon 0\rightarrow \mathcal{N}$.  Using the local model $\hF_n(0)$ from \Cref{prop:ngonschober2}, we find that under the equivalence $\hF(v)\simeq \on{Fun}(\Delta^{q-1},\mathcal{N})$, the fiber $\on{fib}(\alpha)$ describes the full subcategory of $\on{Fun}(\Delta^{q-1},\mathcal{N})$ generated by objects of the form
\[
L\xrightarrow{\on{id}}L\xrightarrow{\on{id}} \dots\xrightarrow{\on{id}} \underbrace{L}_{q-i+1\text{-th}}\rightarrow 0\rightarrow \dots \rightarrow 0
\]
with $L\in \hF(e)$. The functor $\hF(v\xrightarrow{a}e)$ thus induces an equivalence of $\infty$-categories $\on{fib}(\alpha)\simeq \hF(e)$. Using that by definition $L_{v,a}\in \on{fib}(\alpha)$, we find
\[ \on{End}_{\hF(v)}(L_{v,a})\simeq \on{End}_{\hF(e)}(\hF(v\xrightarrow{a}e)(L_{v,a}))\simeq \on{End}_{\hF(e)}(L_e)\,,\]
showing (3).
\end{proof}

\def\Ende{\on{End}_{L}}
\def\Endv{\on{End}_v}
\begin{definition}\label{rem:locend}~
\begin{itemize}
\item We denote $\Ende=\on{End}_{\hF(e)}(L_e)\in \mathcal{D}(k)$ for any choice of non-virtual edge $e$ of $\eS$ and $\Endv=\on{End}_{\hF(v)}(L_{v,a})\in \mathcal{D}(k)$ for each vertex $v$ of $\eS$ and any choice of incident non-virtual halfedge $a$.
\item We call the arc system kit of $\hF$ \textit{positive} if $\on{H}^0(\Ende)\simeq \on{H}^0(\Endv)\simeq k$, $\on{H}^i(\Ende)\simeq \on{H}^i(\Endv)\simeq 0$ for all $v\in \Sgh_0, i<0$ and finally if, for any weight $-1$ vertex $v$ with incident halfedge $a\in e\in \Sgh_1$, the $k$-vector space $\on{H}^1(\Endv)\simeq k$ is generated by the extension arising from combining the cofiber sequence
\[ L_{v,a}\xlongrightarrow{\on{unit}} \on{radj}(\hF(v\xrightarrow{a} e))\circ \hF(v\xrightarrow{a} e) (L_{v,a}) \longrightarrow T_{\hF(v)}(L_{v,a})\]
with the equivalence $L_{v,a}\simeq T_{\hF(v)}(L_{v,a})$.
\end{itemize}
\end{definition}

\begin{lemma}\label{lem:locmor}Let $v$ be a vertex of $\eS$ with incident non-virtual halfedges $a\neq b$.
\begin{enumerate}
\item[(1)]  Assume that $v$ is an interior vertex. Then there exists an equivalence
\[ \on{RHom}_{\hF(v)}(L_{v,a},L_{v,b})\simeq \Endv[-d(a,b)]\,.\]
\item[(2)] Assume that $v$ is a boundary vertex.  If $a<b$ in the total order of halfedges at $v$, then
\[ \on{RHom}_{\hF(v)}(L_{v,a},L_{v,b})\simeq \Ende[-d(a,b)]\,.\]
If $a>b$ in the total order of halfedges at $v$, then
\[ \on{RHom}_{\hF(v)}(L_{v,a},L_{v,b})\simeq 0\,.\]
\end{enumerate}
\end{lemma}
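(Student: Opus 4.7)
The plan is to treat parts (1) and (2) by distinct strategies: for the interior vertex I will use the spherical adjunction at $v$ combined with the paracyclic symmetry axiom v) of \Cref{def:ask}, while for the boundary vertex I will compute directly in the explicit local model from \Cref{constr:arcsyskit}.

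For part (1), I will apply $\on{RHom}_{\hF(v)}(L_{v,a},\mhyphen)$ to the defining cofiber sequence
\[
\id\longrightarrow G_v'F_v'\longrightarrow T_{\hF(v)}
\]
of \Cref{prop:twistv}, evaluated at $L_{v,b}$. By adjunction, the middle term becomes $\on{RHom}(F_v'L_{v,a},F_v'L_{v,b})$. Axiom iii) of \Cref{def:ask} implies that $F_v'L_{v,c}$ is supported on the halfedge-factor corresponding to $c$ (equal to $L_{e_c}$ there, zero elsewhere), so for $a\neq b$ the supports in $\prod_i \hF(e_i)$ are disjoint and this middle term vanishes. This gives $\on{RHom}(L_{v,a},L_{v,b})\simeq \on{RHom}(L_{v,a},T_{\hF(v)}L_{v,b})[-1]$. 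Writing $a=a_i$, $b=a_j$ in cyclic order and letting $k$ be the number of backward steps from $b$ to $a$, iterating axiom v) yields $T_{\hF(v)}^sL_{v,b}\simeq L_{v,a_{j-s}}[s-d(a_{j-s},b)]$. For each $s<k$ the halfedge $a_{j-s}\neq a$, so the vanishing argument repeats $k$ times. At step $k$ the twist lands at $L_{v,a}[k-d(a,b)]$, and combining with the accumulated shift $[-k]$ from the iteration gives $\on{RHom}(L_{v,a},L_{v,b})\simeq \Endv[-d(a,b)]$.

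For part (2), I will work in the local model $\hF(v)\simeq \on{Fun}(\Delta^{q-1},\mathcal{N})$ from \Cref{constr:arcsyskit} Case II, where each $L_{v,a_i}$ is a step function of length $q-i+1$ (up to shift, see \eqref{eq:locLv2}). A morphism $L_{v,a_i}\to L_{v,a_j}$ amounts to a collection of maps subject to commutativity of the squares attached to each edge of $\Delta^{q-1}$. When $a<b$ (i.e., $i<j$, source longer), the commutativity at the transition where the target becomes zero imposes no constraint while the identity-to-identity squares in the overlap force all maps there to agree with a single $f\colon L\to L$; the resulting mapping space is $\Ende$, which combined with the shifts from \eqref{eq:locLv2} yields $\Ende[-d(a,b)]$. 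When $a>b$ (source shorter), the commutativity square at the position where the source drops to $0$ but the target side is $L\xrightarrow{\on{id}}L$ forces the map at the preceding position to be $0$, and this condition propagates backward through the identity squares to position one, giving $\on{RHom}\simeq 0$.

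The main subtlety will be the precise bookkeeping of shifts in the boundary case, tracking the $[i-d(a_1,a_i)]$ shift of \eqref{eq:locLv2} through the limit computation defining $\on{RHom}$ in the functor category. A secondary concern, loops at interior vertices (where $a\neq b$ may lie in the same edge $e$), is handled for free: the functor $F_v'$ is indexed by halfedges rather than edges, so $F_v'L_{v,a}$ and $F_v'L_{v,b}$ still occupy distinct factors of $\prod_i \hF(e_i)$, preserving the support-disjointness that drove the argument in part (1).
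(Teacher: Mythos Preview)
Your argument is correct, and for part (1) it is in fact cleaner than the paper's. Both proofs run an induction reducing the cyclic distance between $a$ and $b$ by one step, but the mechanisms differ. The paper takes the successor $c$ of $a$, forms the quotient $\hC=\on{cof}(\tilde{G}\colon\hF(e_c)\to\hF(v))$, and uses that the two embeddings of $\hC$ (as left and right adjoint of $\pi$) differ by $T_{\hF(v)}$ to identify $\pi(L_{v,a})\simeq L_{v,c}[d(a,c)]$. You instead apply $\on{RHom}(L_{v,a},\mhyphen)$ to the defining cofiber sequence of $T_{\hF(v)}$ at $L_{v,b}$ and kill the middle term via adjunction and axiom iii). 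This bypasses the quotient-category detour entirely; the vanishing $\on{RHom}(F_v'L_{v,a},F_v'L_{v,b})=0$ is immediate from the support condition, and the iteration via axiom v) is transparent. Your remark on loops is also well-placed: since $F_v'$ is indexed by halfedges, the argument is insensitive to whether $a$ and $b$ share an edge.

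For part (2), both you and the paper do a direct computation in $\on{Fun}(\Delta^{q-1},\mathcal{N})$, and your description is more explicit than the paper's one-line sketch. One caution: your phrasing in terms of strict commutativity squares (``forces the map at the preceding position to be $0$'') is a priori a $1$-categorical computation of $\Hom$, not $\on{RHom}$. The reason it gives the derived answer here is that each $L_{v,a_j}$ (before its shift) is the right Kan extension of $L$ from the single vertex $q-j+1\in\Delta^{q-1}$, so $\on{RHom}(X,L_{v,a_j})\simeq\on{RHom}_{\mathcal{N}}(X(q-j+1),L)$ by the Kan-extension adjunction. Invoking this makes both the vanishing for $a>b$ and the identification with $\Ende$ for $a<b$ rigorous at the derived level, and also streamlines the shift bookkeeping you flagged.
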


\begin{proof}
We begin bu proving part (1). We perform an induction on the number of halfedges appearing between $a$ and $b$ in the cyclic order. The base case of the induction is the case $a=b$, in which case $\on{RHom}_{\hF(v)}(L_{v,a},L_{v,b})\simeq \Endv$ by \Cref{lem:locend}. We proceed with the induction step. Let $c$ be the successor of $a$ at $v$. Denote the edges containing $a$ and $c$ by $e_a$ and $e_c$, respectively. Consider the restriction of the functor $G_v'$ from \Cref{prop:twistv} to a functor $\tilde{G}\colon \hF(e_c)\rightarrow \hF(v)$. We define the presentable stable $\infty$-category $\C$ via the cofiber sequence in $\mathcal{P}r^R_{\on{St}}$ (and also in $\mathcal{P}r^L_{\on{St}}$)
\[ \hF(e_c)\xrightarrow{\tilde{G}}\hF(v)\xrightarrow{\pi} \hC
\,.\] Since the cofiber coincides with the fiber of the adjoint diagram, which can be computed in $\on{LinCat}_k$, we see that $\C$ is $k$-linear.

The left adjoint of $\pi$ exhibits $\hC$ as the full subcategory of $\hF(v)$ spanned by those objects $X$ satisfying $\hF(v\xrightarrow{c}e_c)(X)\simeq 0$. It follows that
\[
\on{RHom}_{\hF(v)}(L_{v,a},L_{v,b})\simeq \on{RHom}_{\hC}(\pi(L_{v,a}),\pi(L_{v,b}))\,.
\]
However, in the following we want to consider $\hC$ as another full subcategory of $\hF(v)$, by instead using the right adjoint of $\pi$. Namely, $\hC$ is the full subcategory spanned by those objects $X$ satisfying $\hF(v\xrightarrow{a} e_a)(X)\simeq 0$. These two embeddings of $\hC$ into $\hF(v)$ differ by applying the twist functor $T_{\hF(v)}$ from \Cref{prop:twistv}, as follows from the fact that $\hF(v\xrightarrow{a} e_a)\circ T_{\hF(v)}$ and $\hF(v\xrightarrow{c}e_c)$ are equivalent functors up to postcomposition with an equivalence, see \cite[Prop.~3.11]{Chr22}. Under this identification of $\hC$ with a subcategory of $\hF(v)$, by \Cref{def:ask} part iii), we find $\pi(L_{v,b})\simeq L_{v,b}$ and $\pi(L_{v,c})\simeq L_{v,c}$.

Observe that $\pi G_v' F_v'(L_{v,c})\simeq \pi \tilde{G}(L_{e_c})\simeq 0$ by iii) of \Cref{def:ask}. Applying $\pi$ to the fiber and cofiber sequence
\[ L_{v,c}\rightarrow G_v' F_v'(L_{v,c})\rightarrow T_{\hF(v)}(L_{v,c})\]
we thus obtain an equivalence $L_{v,c}[1]\simeq \pi(T_{\hF(v)}(L_{v,c}))\simeq \pi(L_{v,a})[1-d(a,c)]\in \hC$. Using the induction assumption, we thus find
\begin{align*}
\on{RHom}_{\hF(v)}(L_{v,a},L_{v,b})& \simeq \on{RHom}_{\hC}(L_{v,c}[d(a,c)],L_{v,b})\\
& \simeq \Endv[-d(c,b)-d(a,c)]\\
& \simeq \Endv[-d(a,b)]\,,
\end{align*}
concluding the induction step and the proof of part (1).

Part (2) follows from a direct computation. For that, one uses that $\mathcal{F}(v)\simeq \on{Fun}(\Delta^{r-2},\mathcal{N})$, with $r$ the valency of $v$ and $\mathcal{N}$ the generic stalk. Under this equivalence, the objects correspond to the objects described in part II of \Cref{constr:arcsyskit}, since an object in $\on{Fun}(\Delta^{r-2},\mathcal{N})$ is uniquely determined by its values under $\varrho_1,\dots,\varrho_r$ if these values are non-trivial for only two of these functors\footnote{This follows from the observation that for any subset $I\subset \{1,\dots,r\}$ of cardinality $r-2$ the fiber of $\prod_{i\in I}\varrho_i\colon \on{Fun}(\Delta^{r-1},\mathcal{N})\to \mathcal{N}$ is equivalent to $\mathcal{N}$ and the two non-trivial functors induced by the $\varrho_i$ are equivalences. See also \eqref{eq:description_rho_functor}.}. 
\end{proof}

\subsection{Objects from arcs}\label{sec:objfromarcs}
Our next goal is to give a construction which assigns to a graded curve an object in the $\infty$-category of global sections of a perverse schober with arc system kit. This is reduced to a local problem by cutting the curve along arcs of a mixed-angulation and then gluing the associated lax sections.

Let $\eS$ be the extended ribbon graph of the dual graph $\Sgh$ and $\hF$ an $\eS$-parametrized perverse schober equipped with an arc system kit.
\begin{definition}
Let $e$ be a non-virtual edge of $\eS$ (or equivalently an edge of $\Sgh$) with incident vertices $v,\vprime$ (with $v=\vprime$ if $e$ is a loop). Let $a,\aprime$ be the halfedges of $e$ at $v,\vprime$.
We define a global section $\A_e\in \Gamma(\eS,\hF)$ associated to $e$ as the coCartesian section of the Grothendieck construction of $\hF$ with
\[ \A_e(x)=\begin{cases} L_e & x=e\\ L_{v,a}& x=v\\ L_{\vprime,\aprime}& x=\vprime \\ 0 &\text{else}\end{cases}\]
and $\A_e(v\xrightarrow{a} e)$ and $\A_e(\vprime \xrightarrow{\aprime} e)$ describing the chosen equivalences $\hF(v\xrightarrow{a} e)(L_{v,a}) \simeq L_e$ and $\hF(\vprime\xrightarrow{\aprime} e)(L_{\vprime,\aprime}) \simeq L_e$. If $v=\vprime$ and $e$ is thus a loop, we instead set $\A_e(v)=L_{v,a}\oplus L_{v,\aprime}$.
\end{definition}

Explicitly, the global section $\A_e$ can be depicted near $e$ in the case $v\neq \vprime$ and $v,\vprime$ lying at interior singular points as follows:
\[
\begin{tikzcd}
      & 0                                                                      &               & 0                                                                      &       \\
\dots & L_{v,a} \arrow[r] \arrow[u] \arrow[l] \arrow[d] & L_e & L_{\vprime,\aprime} \arrow[l] \arrow[u] \arrow[r] \arrow[d] & \dots \\
      & 0                                                                      &               & 0                                                                      &
\end{tikzcd}
\]
The section $\A_e$ vanishes on the remainder of $\on{Exit}(\eS)$.

If $v$ and $\vprime$ lie at boundary singular points, incident to the external edges $\eprime,\eprime'$, we can depict $\A_e$ near $e$ as follows.
\[
\begin{tikzcd}
\ddots   & 0                                             &     & 0                                                   & \reflectbox{$\ddots$} \\
{\mathcal{F}(v\to \eprime)(L_{v,a})} & L_{v,a} \arrow[r] \arrow[u] \arrow[l] \arrow[d] & L_e & L_{\vprime,\aprime} \arrow[l] \arrow[u] \arrow[r] \arrow[d] & {\mathcal{F}(\vprime\to \eprime')(L_{\vprime,\aprime})}\\
\reflectbox{$\ddots$}   & 0                                             &     & 0                                                   & \ddots
\end{tikzcd}
\]
The section $\A_e$ vanishes at the positions indicated by dots and on the remainder of $\on{Exit}(\eS)$.

We define $\ASG=\{\A_e\}_{e\in \Sgh_1}\subset \Gamma(\eS,\hF)$ to be the collection of objects corresponding to the edges of the S-graph.

\begin{remark}\label{rem:disassembletoask}
Given the collection of objects $\ASG$, we can recover the local objects contained in the arc system kit by evaluating these global sections at the edges and vertices of $\eS$. For instance, given an edge $e$, we recover $L_e\in \mathcal{F}(e)$ as the value of the section $\A_e$ at $e$.
\end{remark}

The definition of $\ASG$ is a special case of a more general construction, which associates a global section to graded arcs in $\sow$.
Here, and throughout this section, a \emph{graded arc} in $\sow$ is a graded curve in $\sow$ in the sense of \Cref{def:gradedcurve}, whose underlying curve is a compact arc (see \Cref{def:arc}).

We describe in the following how to associate global sections to arbitrary graded arcs. A more explicit version of this general construction for one specific choice of $\hF$ is also described in \cite{Chr21b}. The general strategy is as follows.
Given a graded arc $\edge$ in $\sow$, we choose a representative of its homotopy class and intersect it with the polygons comprising the given mixed-angulation $\AS$ of $\sow$. The representative of $\edge$ is chosen to intersect the mixed-angulation minimally. The intersections between $\AS$ and (its dual) $\Sgh$ define points on their edges, which we call midpoints. Replacing $\eta$ by a homotopic arc, we can assume that the intersections between $\eta$ and $\AS$ lie at these midpoints.

The result is a decomposition of $\edge$ into graded local curves, called segments, each lying in a polygon of the given mixed-angulation and considered up to suitable homotopies. The endpoints of the segments are the midpoints. We associate lax sections of $\hF$ to the segments of $\edge$, which we then glue in \Cref{constr:glsec} to produce the global section of $\hF$ associated with $\edge$.

\begin{figure}[ht]\centering
\begin{tikzpicture}[scale=.7]
\clip(-6,-5.5)rectangle(6,5.5);
\begin{scope}[shift={(0,0)}]
\draw[thick](0,0) circle (5) ;
\draw[thick,fill=gray!23](0,-2) circle (1);

\draw[JungleGreen,ultra thick] plot [smooth,tension=.7] coordinates{
    (15:2.3) (36:3) (0,3.6) (180-36:3) (180+5:3) (238:3.6) (0,-5)};
\draw[JungleGreen,font=\tiny]
    (30:3)node[below right]{$\delta_0$}
    (150:3.2)node[below left]{$\delta_4$}
    (65:3.2)node[below]{$\delta_1$}
    (115:3.2)node[below]{$\delta_2$}
    (210:3.4)node{$\delta_5$}
    (-105:4.2)node[below]{$\delta_6$}
;
\foreach \j in {0,...,4}{
    \draw[dashed,white,very thick]
    (54+72*\j:5)arc (54+72*\j:54+72*\j+18:5)
    (54+72*\j:5)arc (54+72*\j:54+72*\j-18:5);}
\foreach \j in {0,...,4}{
    \draw[font=\small] (90+72*\j:5) coordinate  (w\j)
        (72*\j+5:5)\nn(72*\j-5:5)\nn(72*\j-36-5:5)\nn(72*\j-36+5:5)\nn
        (90-36+72*\j:5) coordinate  (v\j);}
\draw[blue, thick](w0)to(0,2)edge(w1)edge(w4)
    (w1)to(-1,-2)edge(w2)edge[bend left=25](0,2)
    (w4)to(1,-2)edge(w3)edge[bend left=-25](0,2);

\draw[red](0,0)edge(15:2.3)to(165:2.3)
    (v0)to[bend left=15](v1)to(165:2.3)to(v2)to[bend left=15](v3)to[bend left=15](v4)to(15:2.3)to(v0);

\foreach \j in {0,...,4}{\draw(w\j)\nn;\draw(v\j)\ww;}
\draw(-1,-2)\nn(1,-2)\nn(0,2)\nn (0,0)\ww (165:2.3)\ww (15:2.3)\ww;
\end{scope}
\end{tikzpicture}
\caption{The decomposition of an arc into segments}
\label{fig:segments}
\end{figure}
\begin{example}
Consider the weighted marked annulus ${\bf S}$, with $5$ singular points on one of the boundary components and three interior singular points, which can be depicted as in Figure~\ref{fig:segments}.
The mixed-angulation $\AS$ of $\surf$ is depicted in blue and the dual S-graph $\Sgh$ is depicted in red.
A graded arc $\eta$ in ${\bf S}$ is depicted in green. Upon intersecting $\eta$ with the polygons of the mixed-angulation, $\eta$ decomposes into $7$ segments, $\delta_0,\dots,\delta_6$. The segments $\delta_0,\delta_6$ are of type I (in the sense defined below), and the other segments are of type II.
\end{example}

Next, we define the two kinds of segments $\delta_{v,i}$ and $\delta_{v,i,j}$ appearing in $\sow$ and the associated lax sections of $\hF$.

\textbf{Type I.}
Let $v$ be a vertex of $\eS$ and $e$ an edge of $\Sgh$ with halfedge $a$ at $v$. We consider $v$ as a point in $|\Sgh|\subset {\bf S}$ and $e$ as a subset of $|\Sgh|\subset {\bf S}$.
Consider the segment $\delta_{v,a}$ arising from the halfedge $a$, whose endpoints are $v$ and the midpoint of $e$. We let $\A_{\delta_{v,a}}\in \losec(\eS,\hF)$ be the lax section of $\hF$ defined via
\[ \A_{\delta_{v,a}}(x)=\begin{cases} L_e & x=e\\ L_{v,a}& x=v\\ 0 &\text{else}\end{cases}\]
and $\A_{\delta_{v,a}}(v\xrightarrow{a} e)$ being the coCartesian morphism describing the chosen equivalence $\hF(v\xrightarrow{a} e)(L_{v,a})\simeq L_e$.

\textbf{Type II.}
Let $v$ be a vertex of $\eS$ and $e_a,e_b$ two incident non-virtual edges of $\eS$ with halfedges $a, b$ at $v$. Let $\delta_{v,a,b}$ be an embedded curve lying in the $\AS$-polygon containing $v$, starting from the midpoint of $e_a$ and ending at the midpoint of $e_b$. We first consider the case that $\delta_{v,a,b}$ goes in the counterclockwise direction. We define
\[ \A_{\delta_{v,a,b}}=\on{fib}(\A_{\delta_{v,a}}\rightarrow \A_{\delta_{v,b}}[d(a,b)])\,,\]
as the fiber in $\losec(\eS,\hF)$ of a morphism in $\on{RHom}_{\losec(\eS,\hF)}(\A_{\delta_{v,a}},\A_{\delta_{v,b}})\simeq \on{RHom}_{\mathcal{F}(v)}(L_{v,a},L_{v,b})$ corresponding via \Cref{lem:locmor} to $\on{id}_{L_{v,a}}\in \Endv$. Spelling out the definition, if $a\neq b$, we observe
\[
\A_{\delta_{v,a,b}}(x)\simeq \begin{cases} L_{e_a} & x=e_a\,, \\ 
L_{e_b}[d(a,b)-1] & x=e_b\,, \\ 
\on{fib}(L_{v,a}\to L_{v,b}[d(a,b)]) & x=v\,,\\
0 & x\neq v,e_a,e_b\,.\end{cases}
\]
If $a=b$ and $v$ is not $1$-valent, we instead choose some halfedge $c\neq a$ with edge $e_c$, and define
\[
\A_{\delta_{v,a,a}}=\on{fib}(\A_{\delta_{v,a,c}}\to \A_{\delta_{v,c,a}}[d(a,c)-1])
\]
as the fiber of the (uniquely determined) morphism, which evaluates under $\mathcal{F}(v\xrightarrow{c}e_c)$ to the identity on $L_{e_c}[d(a,c)-1]$. The result is independent on the choice of $c$, up to equivalence, and we find
\begin{equation}\label{eq:A(x)split}
\A_{\delta_{v,a,a}}(x)\simeq \begin{cases} L_{e_a}\oplus L_{e_a}[d(v)-1] & x=e_a\,, \\
\on{fib}(L_{v,a}\to L_{v,a}[d(v)])& x=v\,, \\
0 & x\neq v,e_a\,.\end{cases}
\end{equation}
If $a=b$ and $v$ is $1$-valent, we have $T_{\mathcal{F}(v)}(\A_{\delta_{v,a}})[d(v)-1]\simeq \A_{\delta_{v,a}}$, see \Cref{def:ask}.v), and we define $\A_{\delta_{v,a,a}}=\on{fib}(\A_{\delta_{v,a}}\to \A_{\delta_{v,a}}[d(v)])\simeq G_v'F_v'(L_e)$ as the corresponding extension. There is a canonical splitting as in \eqref{eq:A(x)split} arising from the fact that $L_e\simeq G_v'(L_v)$.

Let $\delta_{v,a,b}'$ be the same curve as $\delta_{v,a,b}$ but with the reversed clockwise orientation. We define
\[ \A_{\delta_{v,a,b}'}=\A_{\delta_{v,a,b}}[1-d(a,b)]\,.\]

\begin{construction}\label{constr:glsec}
We fix a graded arc $\edge$ in ${\bf S}$. We write $\edge$ as the composite of segments $\delta_1,\dots,\delta_m$ as above, with $\delta_i(1)=\delta_{i+1}(0)$ lying at the midpoint of an edge $e_i$ of $\Sgh$.

The edge $e_1$ canonically defines a graded arc $e_1(t)\colon [0,1]\rightarrow \sow$. Replacing $\edge$ by a homotopic arc if necessary, we can assume that $\edge(t)=e_1(t)$ for some $\epsilon>0$ and all $0\leq t\leq \epsilon$. Using the grading of $e_1$ at $e_1(\frac{\epsilon}{2})$ as a basepoint of the $\ZZ$-torsor $\pi_1(e_1^*\mathbb{P}(TS)_{e_1(\frac{\epsilon}{2})})$, we denote by $g_1$ the grading of $\edge$ at $\edge(\frac{\epsilon}{2})$. For $2\leq i\leq m-1$, we recursively define $g_{i}=g_{i-1}+d(a,b)-1$ if the $i$-th segment of $\edge$ wraps counterclockwise around the vertex of $\Sgh$ from the halfedge $a$ to the halfedge $b$ and $g_{i}=g_{i-1}+1-d(a,b)$ if the $i$-th segment of $\edge$ wraps clockwise from the halfedge $a$ to the halfedge $b$.

Associated to each segment $\delta_i$ we have a lax section $\A_{\delta_i}\in \losec(\eS,\hF)$. By definition of $\A_{\delta_i}(e_i)$, $\A_{\delta_{i+1}}(e_i)$, we find
\[\A_{\delta_i}(e_i)[g_{i}]\simeq L_{e_i}[g_{i+1}]\simeq \A_{\delta_{i+1}}(e_i)[g_{i+1}]\,,\]
unless $e$ is a loop and $\delta_i$ or $\delta_{i+1}$ have two endpoints at $e$. In that case, the above equivalences are replaced by inclusions of direct summands and the construction in the following is adapted accordingly.

We may thus glue $\A_{\delta_i}$ and $\A_{\delta_{i+1}}$ to a lax section $\A_{\delta_i\ast \delta_{i+1}}$ by declaring
\[ \A_{\delta_i\ast \delta_{i+1}}(x)=\begin{cases} \A_{\delta_i}(x)[g_i]\oplus \A_{\delta_{i+1}}(x)[g_{i+1}] & x\neq {e_i}\\ L_{e_i}[g_{i+1}] & x=e_i\end{cases}\]
on objects of $\on{Exit}(\eS)$ and defining $\A_{\delta_i\ast \delta_{i+1}}$ on $1$-simplices in the obvious way. Alternatively, one can describe this gluing as a pushout as follows. We let $Z_{e_i}\in \losec(\eS,\hF)$ be the section with $Z_{e_i}(e_i)=L_{e_i}$ and $Z_{e_i}(x)=0$ for $x\neq e_i$. Then we have morphisms of lax sections
\[ Z_{e_i}[g_{i+1}]\hookrightarrow \A_{\delta_i}[g_i],\A_{\delta_{i+1}}[g_{i+1}]\,,\] which evaluate to the identity on $L_{e_i}[g_{i+1}]$ at $e_i\in \on{Exit}(\eS)$ and vanish everywhere else. The section $\A_{\delta_i\ast\delta_{i+1}}$ is equivalent to the pushout of the following diagram in $\losec(\eS,\hF)$:
\begin{equation}\label{eq:mpu}
\begin{tikzcd}
{Z_{e_i}[g_{i+1}]} \arrow[r, hook] \arrow[d, hook] & {\A_{\delta_i}[g_i]} \\
{\A_{\delta_{i+1}}[g_{i+1}]}                    &
\end{tikzcd}
\end{equation}

If $\edge$ has only two segments, we find that $\A_\edge\coloneqq \A_{\delta_i\ast\delta_{i+1}}\in \Gamma(\eS,\hF)$ already defines a global section, and we are done. This happens for instance if $\edge=e$ is an edge of $\Sgh$. Equipping $e$ with the canonical grading, the construction of $\A_{\edge}$ recovers in this case the construction of $\A_e$ from the beginning of this section.

If $\eta$ has three or more segments, we proceed with gluing $\A_{\delta_{i-1}}$ or $\A_{\delta_{i+2}}$ with $\A_{\delta_i\ast \delta_{i+1}}$ in a similar way. Continuing this process, we obtain the section $\A_{\edge}\in \losec(\eS,\hF)$ after gluing the lax sections of all segments. It is straightforward to verify that $\A_{\edge}$ lies in the full subcategory $\Gamma(\eS,\hF)\subset \losec(\eS,\hF)$, i.e.\ defines a coCartesian, meaning global, section.
\end{construction}

\subsection{Homs from (oriented) intersections}\label{sec:Hom-Int}
\def\inint{\overrightarrow{\bigcap}_{\surf^\circ}}
\def\singint{\overrightarrow{\bigcap}_{\W}}
\def\bdint{\overrightarrow{\bigcap}_{\partial\surf}}

In this section we calculate, in certain cases, the morphisms between objects of the sort constructed in the previous section in terms of counts of intersections of the corresponding graded curves.
This will be used in \Cref{subsec:flipsrevisited} to relate flips of S-graphs to tilting of hearts.

Let $\eS$ be the extended ribbon graph obtained from the dual graph $\Sgh$ and $\hF$ an $\eS$-parametrized perverse schober equipped with an arc system kit.

\begin{definition}
Let $\edge,\zeta$ be two graded arcs in ${\bf S}$. We choose representatives of $\edge$ and $\zeta$ with the minimal number of intersections.
\begin{itemize}
\item An \emph{interior intersection} from $\edge$ to $\zeta$ is a transverse intersection of $\edge$ and $\zeta$ away from their endpoints. We write $\inint(\edge,\zeta)$ for the set of interior intersections from $\edge$ to $\zeta$.
\item A \emph{singular intersection} from $\edge$ to $\zeta$ is an intersection of their endpoints at an interior singular point. We write $\singint(\edge,\zeta)$ for the set of singular intersections from $\edge$ to $\zeta$. Given $x\in \singint(\edge,\zeta)$, we write $v_x$ for the vertex of $\Sgh$ at which the intersection lies.
\item A \emph{boundary intersection} from $\edge$ to $\zeta$ is an intersection at a boundary singular point of $\Sgh$, satisfying the following directedness property: we ask that $\eta$ comes before $\zeta$ when going counterclockwise around $v$ inside ${\bf S}$. We write $\bdint(\edge,\zeta)$ for the set of boundary intersections from $\edge$ to $\zeta$.
\end{itemize}
Each intersection $x$ as defined above is directed, i.e.\ involves an ordering of its involved graded arcs, and this is needed to make sense of the intersection index $d(x)\in \ZZ$ of the intersection (cf. the discussion after Definition~\ref{def:gradedcurve}).
\end{definition}

\begin{proposition}\label{prop:inthom}~
\begin{enumerate}[label=(\arabic*)]
\item Let $e\neq \eprime$ be two edges of $\Sgh$. Then
\begin{equation}\label{eq:homMe} \on{RHom}_{\Gamma(\eS,\hF)}(\A_e,\A_{\eprime})\simeq \bigoplus_{x\in \singint(e,\eprime)}\on{End}_{v_x}[-d(x)]\oplus \bigoplus_{x\in \bdint(e,\eprime)}\Ende[-d(x)]\,,
\end{equation}
see also \Cref{rem:locend} for the definition of $\on{End}_{v_x}$ and $\Ende$.
\item Let $e$ be an edge of $\Sgh$ connecting the vertices $v,\vprime$. Define $\on{End}_{e,v,u}$ as the following pullback in the derived $\infty$-category $\mathcal{D}(k)$.
\[
\begin{tikzcd}[column sep=large]
\on{End}_{e,v,u} \arrow[r] \arrow[d] \arrow[r] \arrow[rd, "\lrcorner", phantom] & \Endv \arrow[d, "\hF(v\to e)"] \\
\on{End}_{\vprime} \arrow[r, "\hF(\vprime\to e)"]                                                   & \Ende
\end{tikzcd}
\]
If $v\neq \vprime$, then
\[ \on{End}_{\Gamma(\eS,\hF)}(\A_e)\simeq \on{End}_{e,v,u} \,.\]
If $v=\vprime$, then $e$ is a loop with halfedges $a\neq b$ at $v$, and
\[ \on{End}_{\Gamma(\eS,\hF)}(\A_e)\simeq \on{End}_{e,v,u} \oplus \Endv[-d(a,b)]\oplus \Endv[-d(b,a)]\,.\]
\end{enumerate}
In particular, if the arc system kit is positive, then $\A_{\Sgh}$ forms a simple-minded collection in the stable $\infty$-category it generates, meaning that for all $e,\eprime\in \Sgh_1$
\[ \on{Ext}^i(\A_e,\A_{\eprime})=\on{H}^i\on{RHom}_{\Gamma(\eS,\hF)}(\A_e,\A_{\eprime})\simeq \begin{cases} k & \text{for }i=0\text{ and }e=\eprime\,,\\ 0 & \text{for }i=0\text{ and }e\neq \eprime\,,\\ 0 & \text{for }i<0\,.\end{cases}\]
\end{proposition}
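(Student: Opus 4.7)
The plan is to compute $\on{RHom}_{\Gamma(\eS,\hF)}(\A_e,\A_{\eprime})$ as the $\infty$-categorical limit of pointwise Hom spaces over the poset $\on{Exit}(\eS)$, then exploit the sparsity of the sections $\A_e,\A_{\eprime}$ to reduce to a finite collection of local Hom spaces already handled by \Cref{lem:locmor}. Concretely, since $\Gamma(\eS,\hF)$ is the limit of $\hF$ in $\on{LinCat}_k$ and $\on{Exit}(\eS)$ is a $1$-poset whose only non-identity morphisms have the form $v\xrightarrow{a}f$, the Hom space is the fibre of the natural map
\[
\prod_{x} \on{RHom}_{\hF(x)}(\A_e(x),\A_{\eprime}(x)) \longrightarrow \prod_{v\xrightarrow{a}f} \on{RHom}_{\hF(f)}(\A_e(f),\A_{\eprime}(f))
\]
recording the discrepancy between the value at $f$ and the image from $v$ under $\hF(v\to f)$.

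The first observation is that the sections $\A_e,\A_{\eprime}$ have very limited support, namely the edges $e,\eprime$ themselves, their endpoints, and, when endpoints are boundary vertices, the virtual edges attached there. For any virtual edge $c$ the unique incoming morphism $v\to c$ forces the limit contribution to collapse to the value at $v$, so virtual edges produce no extra contributions. When $e\neq\eprime$, non-virtual edges likewise contribute zero (no shared edge-support), and the formula simplifies to the direct sum
\[
\on{RHom}_{\Gamma(\eS,\hF)}(\A_e,\A_{\eprime}) \simeq \bigoplus_{v\text{ shared endpoint}} \on{RHom}_{\hF(v)}(\A_e(v),\A_{\eprime}(v)).
\]
Expanding $\A_e(v),\A_{\eprime}(v)$ into their halfedge components and applying \Cref{lem:locmor}(1) at interior $v$, respectively (2) at boundary $v$, identifies each summand with a local Hom indexed by a pair of halfedges $(a,b)$: interior pairs contribute $\on{End}_v[-d(a,b)]$ and correspond bijectively to the singular intersections in $\singint(e,\eprime)$, while boundary pairs contribute $\Ende[-d(a,b)]$ precisely when $a<b$ in the total order (this being the directedness condition defining $\bdint(e,\eprime)$) and vanish otherwise. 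This proves (1).

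For (2), there is an additional non-trivial contribution at the edge $e$ itself, namely $\Ende$, constrained by the morphisms $v\xrightarrow{a}e$ and $\vprime\xrightarrow{\aprime}e$ via the functors $\hF(v\to e)$ and $\hF(\vprime\to e)$ into $\Ende$. In the non-loop case, assembling these constraints produces precisely the pullback $\on{End}_{e,v,u}$. In the loop case $v=\vprime$, the expansion $\A_e(v)=L_{v,a}\oplus L_{v,\aprime}$ yields a $2{\times}2$ matrix of Hom spaces: only the two diagonal entries are coupled to $\Ende$ via the two halfedge morphisms, producing the same pullback $\on{End}_{e,v,u}$, while the two off-diagonal entries survive as the additional summands $\on{End}_v[-d(a,\aprime)]\oplus \on{End}_v[-d(\aprime,a)]$ computed by \Cref{lem:locmor}(1). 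The main obstacle here is the loop bookkeeping, since one must verify that the collapsing principle for virtual edges still applies at a loop attached to a boundary vertex; this however follows from the same argument as above, each virtual edge contributing a single incoming constraint with no parallel branches.

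Finally, for the simple-minded collection property, positivity of the arc system kit immediately gives the vanishing of $\on{Ext}^{\leq 0}(\A_e,\A_{\eprime})$ for $e\neq\eprime$: every intersection $x$ satisfies $d(x)=d(a,b)\geq 1$, since distinct halfedges at a common vertex have $d(a,b)\geq 1$, so each summand $\on{End}_{v_x}[-d(x)]$ or $\Ende[-d(x)]$ lives in cohomological degrees $\geq 1$. For $e=\eprime$, the fibre sequence $\on{End}_{e,v,u}\to \on{End}_v\oplus\on{End}_{\vprime}\to \Ende$ combined with positivity of the three endomorphism objects yields $\on{H}^0(\on{End}(\A_e))=k$ (using that the induced map $k\oplus k\to k$ on $\on{H}^0$ is the difference of the two units) and $\on{H}^{<0}(\on{End}(\A_e))=0$; any loop summands appear only in strictly positive degrees and do not affect the conclusion.
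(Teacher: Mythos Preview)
Your proof is correct and takes a genuinely different route from the paper. The paper decomposes $\A_e$ as a pushout in the larger category $\losec(\eS,\hF)$ of \emph{all} sections, writing it as $\A_{\delta_1}\amalg_{Z_e}\A_{\delta_2}$ for the two segments $\delta_1,\delta_2$ of $e$; applying $\on{RHom}(-,\A_{\eprime})$ turns this into a pullback, and the key step is that each $\A_{\delta_i}$ (resp.\ $Z_e$) is a $p$-relative left Kan extension from the single vertex $v_i$ (resp.\ edge $e$), so that $\on{RHom}_{\losec}(\A_{\delta_i},\A_{\eprime})\simeq\on{RHom}_{\hF(v_i)}(\A_{\delta_i}(v_i),\A_{\eprime}(v_i))$. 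This reduces to the same invocation of \Cref{lem:locmor} that you use.

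Your approach instead exploits directly that mapping spaces in a limit of $\infty$-categories are the limit of pointwise mapping spaces, together with the bipartite structure of $\on{Exit}(\eS)$ and a cofinality argument to discard virtual edges. This is more elementary in that it avoids the local-section/Kan-extension machinery, but the paper's pushout decomposition is not gratuitous: it is precisely the gluing-of-segments construction (\Cref{constr:glsec}) specialized to the two-segment case, and that framework is needed later for arbitrary graded arcs. So the paper's argument doubles as a warm-up for the general construction, while yours is a cleaner ad hoc computation for this particular statement. Both are valid; your equalizer description also makes part (2), especially the loop case, slightly more transparent than the paper's sketch (``part (2) follows from a similar argument'').
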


\begin{proof}
We only prove part (1), part (2) follows from a similar argument. Using that
\[\on{RHom}(\mhyphen,\A_{\eprime})\colon \losec(\eS,\hF)^{\on{op}}\rightarrow \mathcal{D}(k)\]
preserves limits and that $\A_e$ is the pushout of a diagram as in \eqref{eq:mpu}, we find that $\on{RHom}(\A_e,\A_{\eprime})$ fits into a pullback diagram in $\mathcal{D}(k)$ as follows.
\[
\begin{tikzcd}
{\on{RHom}_{\losec(\eS,\hF)}(\A_e,\A_{\eprime})} \arrow[r] \arrow[d] \arrow[rd, "\lrcorner", phantom] & {\on{RHom}_{\losec(\eS,\hF)}(\A_{\delta_1},\A_{\eprime})} \arrow[d] \\
{\on{RHom}_{\losec(\eS,\hF)}(\A_{\delta_2},\A_{\eprime})} \arrow[r]                                 & {\on{RHom}_{\losec(\eS,\hF)}(Z_e,\A_{\eprime})}
\end{tikzcd}
\]
Here $\delta_1,\delta_2$ denote the two segments of the edge $e$ considered as a graded arc. Let $v_1,v_2$ be the two vertices of $\Sgh$ at which $\delta_1,\delta_2$ lie and let $p\colon \Gamma(\hF)\rightarrow \on{Exit}(\Sgh)$ be the Grothendieck construction of $\hF$. The lax sections $\A_{\delta_1},\A_{\delta_2}$ and $Z_e$ are $p$-relative left Kan extensions of their restrictions to $v_1,v_2$ and $e$, respectively. Hence, we find
\begin{align*}
{\on{RHom}_{\losec(\eS,\hF)}(\A_{\delta_1},\A_{\eprime})}& \simeq \on{RHom}_{\hF(v_1)}(\A_{\delta_1}(v_1),\A_{\eprime}(v_1))\\
{\on{RHom}_{\losec(\eS,\hF)}(\A_{\delta_2},\A_{\eprime})}& \simeq \on{RHom}_{\hF(v_2)}(\A_{\delta_2}(v_2),\A_{\eprime}(v_2))\\
{\on{RHom}_{\losec(\eS,\hF)}(Z_e,\A_{\eprime})} & \simeq \on{RHom}_{\hF(e)}(Z_e(e),\A_{\eprime}(e))\simeq \on{RHom}_{\hF(e)}(L_e,0)\simeq 0\,.
\end{align*}
It follows that
\[ \on{RHom}_{\losec(\eS,\hF)}(\A_e,\A_{\eprime})\simeq {\on{RHom}_{\losec(\eS,\hF)}(\A_{\delta_1},\A_{\eprime})}\oplus {\on{RHom}_{\losec(\eS,\hF)}(\A_{\delta_2},\A_{\eprime})}\,.
\]
We only describe the first direct summand above, the second can be described analogously. If $\eprime$ has no endpoints at $v_1$, then by construction $\A_{\eprime}(v_1)\simeq 0$ and hence \[{\on{RHom}_{\losec(\eS,\hF)}(\A_{\delta_1},\A_{\eprime})}\simeq 0\,.\]
 We have $\A_{\delta_1}(v_1)=L_{v_1,a}$ with $a$ the halfedge of $e$ at $v_1$. If $\eprime$ has endpoints at $v_1$, either one or two, then $\A_{\eprime}(v_1)$ is the direct sum of one or two objects $L_{v_1,b}$, where $b$ is a halfedge of $\eprime$ at $v_1$.  By \Cref{lem:locmor}, we have
\[ \on{RHom}_{\hF(v_1)}(L_{v_1,a},L_{v_1,b})\simeq \on{End}_{v_1}[-d(a,b)]\]
if $v_1$ has finite weight and
\[ \on{RHom}_{\hF(v_1)}(L_{v_1,a},L_{v_1,b})\simeq \Ende[-d(a,b)]\]
if $v_1$ has weight $\infty$ and $a<b$. We observe that $d(a,b)=d(x)$ is the degree of the intersection $x$ of $e,\eprime$ at $v_1$. Assembling the results of the above computations yields \eqref{eq:homMe}.
\end{proof}

\begin{remark}
\Cref{prop:inthom} is a special case of a more general description of the derived Homs between two objects $\A_{\edge},\A_{\zeta}$ associated to two graded arcs $\edge,\zeta$ in terms of intersections. We expect that in favorable situations, namely that $\edge$ and $\zeta$ together cut out no discs in ${\bf S}\backslash M$, there is an equivalence
\[
\on{RHom}_{\Gamma(\eS,\hF)}(\A_{\edge},\A_{\zeta})\simeq \bigoplus_{x\in \singint(\edge,\zeta)}\on{End}_{v_x}[-d(x)]\oplus \bigoplus_{x\in \inint(\edge,\zeta)\cup \bdint(\edge,\zeta)}\Ende[-d(x)]\,.
\]
If $\edge$ and $\zeta$ together cut out discs, there are counter-examples to this description, see \cite{Chr21b}.
\end{remark}

The next \Cref{lem:sm} expresses that in typical situations taking cones along morphisms arising from an intersection corresponds geometrically to smoothing the intersection.

\begin{lemma}\label{lem:sm}
\begin{enumerate}
\item Let $e,\eprime$ be two edges of $\Sgh$, with a singular intersection or boundary intersection $x$ from $e$ to $\eprime$ at a vertex $v$.  Let
\[
    \beta_x\colon \A_e\rightarrow \A_{\eprime}[d(x)]
\]
be the morphism corresponding via \Cref{prop:inthom} to the identity in $\Endv$ or $\Ende$ in the summand of the intersection $x$. There exists an equivalence in $\Gamma(\eS,\hF)$
\be\label{eq:cone1}
    \on{fib}(\beta_x)\simeq \A_{\edge} ,
\ee
where $\edge$ is the graded arc obtained from $e$ and $h$ by smoothing out the intersection $x$ in the counterclockwise direction. This is illustrated on the left in Figure~\ref{fig:arc segments}.
\item
Suppose that the arc system kit is positive and let $e$ be an edge of $\Sgh$ incident to two vertices $v,\vprime$ with $\vprime$ of weight $-1$. Let $\eta$ be the graded monogon arc, going from $v$ to $v$ enclosing $\vprime$, such that $\A_{\eta}\simeq \on{cof}(\A_{e}[-1]\rightarrow \A_e)$ is the non-split self-extension of $\A_e$. Let $h\neq e$ be an edge incident to $u$ and $\zeta$ the counterclockwise smoothing of the intersection $y$ of $\eta$ and $h$, as depicted in Figure~\ref{fig:arc segments}. Consider the morphism \[ \beta_y\colon \A_{\eta}\rightarrow \A_{h}[d(e,h)]\]
arising from the commutative square
\[
\begin{tikzcd}
{\A_e[-1]} \arrow[d] \arrow[r] & \A_e \arrow[d, "\beta_x"] \\
0 \arrow[r]                    & {\A_{h}[d(e,h)]}
\end{tikzcd}
\]
where $\beta_x$ is the morphism arising from the intersection, denoted $x$, of $e$ and $h$ at $v$. Then
\be\label{eq:cone2}
    \on{fib}(\beta_y)\simeq \A_{\zeta}\,.
\ee
\end{enumerate}
\end{lemma}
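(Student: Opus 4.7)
For Part (1), my plan is to reduce to a purely local computation inside the $\AS$-polygon at $v$. Decompose $e$ and $\eprime$ as composites of segments via Construction 4.15; the intersection $x$ at $v$ singles out the type I segments $\delta_{v,a}$ of $e$ and $\delta_{v,b}$ of $\eprime$, where $a,b$ are the halfedges at $v$ belonging to $e, \eprime$. Unwinding the proof of Proposition 4.17, after passing to $\losec(\eS,\hF)$ the morphism $\beta_x$ arises as the pushout of the single local morphism $\A_{\delta_{v,a}}\to\A_{\delta_{v,b}}[d(a,b)]$ corresponding to $\on{id}\in\on{End}_v$ (or $\on{End}_L$ at a boundary vertex), the remaining segments contributing $0$ to the relevant summand of $\on{RHom}(\A_e,\A_{\eprime})$. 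By the very definition of the type II local section in Section 4.3, the fiber of this local morphism is $\A_{\delta_{v,a,b}}$. Since fibers commute with pushouts in the stable $\infty$-category $\losec(\eS,\hF)$, the fiber of $\beta_x$ is obtained from the iterated pushout descriptions of $\A_e$ and $\A_{\eprime}$ by substituting the type II segment $\A_{\delta_{v,a,b}}$ for the pair $\A_{\delta_{v,a}}, \A_{\delta_{v,b}}$ while keeping all other segments intact. This is precisely the pushout data that Construction 4.15 assigns to the smoothed graded arc $\edge$; the grading bookkeeping matches because the recursion $g_{i+1}=g_i+d(a,b)-1$ of that construction agrees with the shift $[d(a,b)-1]$ appearing in the definition of $\A_{\delta_{v,a,b}}$.

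For Part (2), I would first apply Part (1) to the right column of the defining square: $\on{fib}(\beta_x)\simeq\A_\edge$, where $\edge$ is the counterclockwise smoothing of $e$ and $h$ at the intersection $x$ on $v$. The square fits into a morphism of cofiber sequences whose top row is $\A_e[-1]\to\A_e\to\A_\eta$ (the defining triangle of $\A_\eta$) and whose bottom row is $0\to\A_h[d(e,h)]\xrightarrow{=}\A_h[d(e,h)]$. Taking fibers of the vertical maps produces a cofiber sequence $\A_e[-1]\to\A_\edge\to\on{fib}(\beta_y)$ in $\Gamma(\eS,\hF)$, and hence
\[
\on{fib}(\beta_y)\simeq\on{cof}\big(\A_e[-1]\to\A_\edge\big).
\]

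The main remaining task is to identify this cofiber with $\A_\zeta$. Geometrically, $\zeta$ is obtained from $\edge$ by letting one of its endpoints wrap once around the weight $-1$ vertex $\vprime$, and on the level of global sections such a wrap is realized by attaching the non-split self-extension at $\vprime$: by the positivity assumption (Definition 4.9 applied to $\vprime$), the generator of $\on{H}^1(\on{End}_{\vprime})\simeq k$ is precisely the unit--counit extension that effects this wrap, and the connecting morphism $\A_e[-1]\to\A_\edge$ inherited from the triangle defining $\A_\eta$ is this extension class localized at the endpoint of $\edge$ approaching $\vprime$. Taking the cofiber therefore yields the global section built from the segment decomposition of $\zeta$. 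The main anticipated obstacle is exactly this last identification of the connecting morphism with the unit--counit self-extension at $\vprime$: it is essentially a local compatibility check between the definition of $\A_\eta$ as a self-extension of $\A_e$ and the way the extension class at the weight $-1$ vertex is pinned down by positivity.
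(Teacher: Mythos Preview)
Your Part (1) is correct and amounts to the same argument the paper gives, just phrased in the language of pushouts of local sections rather than pointwise evaluation. The paper simply observes that $\beta_x$ vanishes at every object of $\on{Exit}(\eS)$ except $v$, where it is the morphism $L_{v,a}\to L_{v,b}[d(a,b)]$; since fibers of morphisms of sections are computed pointwise, the fiber has exactly the values of the type~II segment at $v$ and the unchanged values of $\A_e$, $\A_{\eprime}$ elsewhere, which is $\A_\eta$. Your pushout formulation is an equivalent repackaging of this.

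For Part (2) you take a genuinely different route. The paper asserts that the same pointwise evaluation works ``analogously'': one computes $\beta_y$ at each object of $\on{Exit}(\eS)$ and reads off the fiber directly, without passing through Part (1). Your approach instead uses the $3\times 3$ diagram to reduce to the identification $\on{cof}(\A_e[-1]\to\A_\xi)\simeq\A_\zeta$, where $\xi$ is the smoothing from Part (1). This is valid and the obstacle you flag is real but mild: the connecting morphism $\A_e[-1]\to\A_\xi$ is, by construction, the lift of the self-extension map $\A_e[-1]\to\A_e$ through the fiber inclusion $\A_\xi\hookrightarrow\A_e$, and since the self-extension is (by positivity) supported at the weight $-1$ vertex $\vprime$, a pointwise check at $\vprime$ shows the cofiber has the type~II ``once-around'' segment value there and agrees with $\A_\xi$ elsewhere. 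So your reduction ultimately lands on the same local computation the paper's direct method would perform; the paper's path is shorter, but yours makes the role of Part (1) and of the positivity hypothesis more transparent.
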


\begin{figure}[ht]
\begin{tikzpicture}[scale=.7,arrow/.style={->,>=stealth}]
\draw[dashed,thick] (0,0) circle (3.6) (.7,1)node[font=\tiny]{$y$};
\draw[->,>=stealth](69:.6)to[bend left=-60](72:1.5);
  \draw[orange!50,thick]
    (0,1).. controls +(-60:4) and +(-120:4) ..(0,1)
    (.8,-.5)node{$\eta$};
  \draw[blue!50,thick]
    (0,1).. controls +(-135:4.5) and +(-70:2) ..(1.2,-1).. controls +(110:2) and +(-120:1) ..(2,3)
    (1.3,1)node{$\zeta$};
  \draw[red,thick]
    (2,3)to node[above]{$\eprime$} (0,1)to(-2,3) (0,0) (0,-.3)node[left]{$e$}
    (2,3)\ww(-2,3)\ww(0,1)\ww to (0,-1)\ww  node[below]{$u$} (0,1) node[above]{$v$}
        (0,1.6) node[font=\tiny]{$\,\cdots$};
\begin{scope}[scale=-1,shift={(10,0)}]
\draw[->,>=stealth](.7,0)to[bend left=-60]node[font=\tiny]{$\;\;^x$}(1.3,-.3);
\draw[dashed,thick] (0,0) circle (3.6);
  \draw[red,thick](3,-2)edge[orange,bend left=-10] node[above]{$\eta$}(-1,0);
\draw[red,thick]
    (-1,0) edge (1,0) edge (-3,2) edge (-3,-2)\ww
    (-1.5,0) node[rotate=90,font=\tiny]{$\cdots$}
    (1.5,0) node[rotate=90,font=\tiny]{$\cdots$}
    (1,0) edge (3,2) edge node[left]{$\eprime$}(3,-2)\ww
    (3,2)\ww(-3,2)\ww(-3,-2)\ww(3,-2)\ww(0,0) node[below]{$e$};
\end{scope}
\end{tikzpicture}
\caption{The smoothings of intersections.}
\label{fig:arc segments}
\end{figure}

\begin{proof}
We only prove part (1), part (2) can be proven analogously. The morphism $\beta_x$ evaluates trivially at all edges of $\Sgh$ and all vertices except $v$. Let $a$ and $b$ be the halfedges of $e$, respectively $h$, at $v$ participating in the intersection $x$. Evaluating at $v$, we obtain the morphism $\beta_x(v)\colon L_{v,a}\to L_{v,b}[d(x)]\simeq L_{v,b}[d(a,b)]$, whose fiber describes the value at $v$ of the section associated with a type II segment in \Cref{sec:objfromarcs}. This shows $\on{fib}(\beta_x)\simeq \A_{\edge}$.
\end{proof}

\subsection{Flip equivalences revisited}\label{subsec:flipsrevisited}
In this section, which is central to the paper, we relate mixed-angulations and their flips to hearts and their tilts in $\infty$-categories of global sections of perverse schobers with positive arc system kit.
We first construct finite hearts whose simples correspond to the edges of an S-graph.
Then, we show that if a perverse schober admits an arc system kit, then so does the induced schober on the flipped S-graph.
Finally, we state and prove the main result of this section on flips and tilts.

As previously, let $\eS$ be the extended ribbon graph obtained from the dual graph $\Sgh$ and $\hF$ an $\eS$-parametrized perverse schober equipped with an arc system kit.

\begin{definition}
We define $\CS\subset \Gamma(\eS,\hF)$ as the idempotent complete stable subcategory generated by the collection of objects $\ASG$ associated with the edges of $\Sgh$, see \Cref{sec:objfromarcs}.
\end{definition}

\begin{proposition}\label{pp:hearts}
If the arc system kit is positive, then $\CS$ admits a finite heart $\CSh$ with simples $\ASG$.
\end{proposition}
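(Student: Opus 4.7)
The plan is to deduce the claim from \Cref{prop:inthom} together with the standard correspondence between simple-minded collections and bounded t-structures with length heart, going back to Al-Nofayee and Koenig--Yang. The final paragraph of \Cref{prop:inthom} already records that, under the positivity hypothesis on the arc system kit, the collection $\ASG = \{\A_e\}_{e\in\Sgh_1}$ satisfies $\on{Ext}^{<0}(\A_e,\A_{e'}) = 0$ for all pairs, $\on{Ext}^0(\A_e,\A_{e'}) = 0$ for $e\neq e'$, and $\on{Ext}^0(\A_e,\A_e) = k$. These are exactly the axioms of a simple-minded collection in the homotopy category of $\Gamma(\eS,\hF)$, and by the very definition of $\CS$ the collection $\ASG$ generates $\CS$ as a stable subcategory.

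First I would apply the simple-minded-collection-to-t-structure dictionary to conclude that there is a unique bounded t-structure on $\CS$ whose heart $\CSh$ is the extension closure of $\ASG$ inside $\CS$, that this heart is a length (hence finite in the sense needed for stability conditions) abelian category, and that its set of isomorphism classes of simple objects is precisely $\ASG$. Explicitly, the aisles can be built as
\[
\CS^{\leq 0} = \langle \A_e[i]\mid e\in\Sgh_1,\ i\geq 0\rangle_{\on{ext}},\qquad \CS^{\geq 0} = \langle \A_e[i]\mid e\in\Sgh_1,\ i\leq 0\rangle_{\on{ext}},
\]
and the boundedness follows from the fact that the finite set $\ASG$ generates $\CS$. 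The simplicity of each $\A_e$ in the heart, and the fact that there are no other simples, both follow formally from the $\on{Ext}^0$ vanishing and $\on{Ext}^0$ endomorphism computations.

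The only step requiring care is to verify that the Koenig--Yang construction, classically phrased over a field with Hom-finite hypotheses, carries over to our $k$-linear stable $\infty$-categorical setting where $\Ende$ and $\Endv$ are general objects of $\mathcal{D}(k)$. This is routine: the construction of the aisles uses only the vanishing of $\on{Ext}^{<0}$ between the elements of $\ASG$ (supplied by positivity), and the identification of the simples of the heart with $\ASG$ uses only $\on{Ext}^0(\A_e,\A_e) = k$ together with $\on{Ext}^0(\A_e,\A_{e'}) = 0$ for $e\neq e'$ (again supplied by positivity). No finiteness of higher Ext-groups enters. The main potential obstacle, then, is simply a careful bookkeeping check that these classical arguments do not implicitly invoke the field hypothesis elsewhere; I expect no genuinely new input is required beyond the vanishing statements already established.
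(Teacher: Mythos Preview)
Your proposal is correct and follows essentially the same approach as the paper. The paper's proof is just two sentences: it invokes \Cref{prop:inthom} to conclude that $\ASG$ is a simple-minded collection, and then cites Corollary~9.5 of \cite{KN12} for the passage to a finite heart. Your write-up spells out the same argument in more detail (explicit description of the aisles, and a remark on the field hypothesis), but the underlying strategy is identical.
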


\begin{proof}
The positivity of the arc system kit implies by \Cref{prop:inthom} that $\ASG$ is a simple-minded collection. The proposition is thus shown in Corollary 9.5 of \cite{KN12}.
\end{proof}

Let $e$ be an edge of $\Sgh$ and $\Sgh^\flat$ the backward flip of $\Sgh$ at $e$. By \Cref{prop:flipeq}, we get an induced $\eS^\flat$-parametrized perverse schober $\hF^\flat$ and an equivalence of $\infty$-categories $\mu_e^\flat\colon \Gamma(\eS,\hF)\simeq \Gamma(\eS^\flat,\hF^\flat)$. As observed in \Cref{rem:disassembletoask}, we can recover the local objects in the arc system kit by evaluating the objects in $\ASG$. The following data is thus a canonical choice to define (a part of) an arc system kit for $\hF^\flat$:
\begin{itemize}
    \item Let $f$ be an edge of $\Sgh$. Note that $f$ determines an edge $f^\flat$ of $\Sgh^\flat$ (since a backward flip induces a bijection between the edges of the S-graphs). This is understood to mean in the case $f=e$, that $e^\flat=e[-1]$. For $f^\flat\not =e^\flat$, we set $L_{f^\flat}^\flat=\mu_e^\flat(\A_f)(f^\flat)\in \mathcal{F}^\flat(f^\flat)$. We further set $L_{e^\flat}^\flat=\mu_e^\flat(\A_e)(e^\flat)[-1]$.
    \item Let $v$ be a vertex of $\Sgh^\flat$ with an incident halfedge $a$.
    Then $a$ is part of an edge $f$ of $\Sgh^\flat$.
    By considering $f$ as a graded arc, there is an associated global section $\A_{f}\in \glsec(\eS,\mathcal{F})$. If $f\not =e^\flat$, we set $L_{v,a}^\flat=\mu_e^\flat(\A_{f})(v)\in \mathcal{F}^\flat(v)$. If $f=e^\flat$, we set $L_{v,a}^\flat=\mu_e^\flat(\A_{e})(v)[-1]\simeq \mu_e^\flat(\A_{e^\flat})(v)$. Note that if $f$ is a loop, one finds that $\mu_e^\flat(\A_{f})(v)$ has two non-zero direct summands, and we instead choose $L_{v,a}^\flat$ as the direct summand whose image under $\mathcal{F}^\flat(v\xrightarrow{a}f)$ is non-zero.
\end{itemize}

\begin{proposition}\label{prop:arcsystemkitflip}
The data $\{L_f^\flat\}_{f\in \Sgh^\flat_1},\{L_{v,a}^\flat\}_{v\in \Sgh^\flat_0,a\in H(v)}$ defined above extends to an arc system kit for $\mathcal{F}^\flat$.
\end{proposition}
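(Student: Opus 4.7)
The plan is to verify the axioms (iii)--(v) of \Cref{def:ask} for the proposed data $L_f^\flat, L_{v,a}^\flat$ by reducing everything to local computations at each vertex of $\Sgh^\flat$. Axioms (iii), (iv), (v) are each local: they constrain either the value of a functor at a vertex, or the twist functor at a vertex, or the structure at a virtual edge attached to a boundary vertex. Thus it suffices to check them vertex-by-vertex (and edge-by-edge for (iii)), independently.

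First, I would dispose of the ``far from the flip'' case. If $v$ is a vertex of $\Sgh^\flat$ which is not incident to the flipped edge $e$ and whose set of incident halfedges is not affected by the flip, then the zig-zag of contractions in \Cref{rem:flip} is constant at $v$; consequently $\mathcal{F}^\flat$ restricted to the neighborhood of $v$ coincides with $\mathcal{F}$, and $\mu_e^\flat$ acts as the identity on the local data $L_{v,a}$ and $L_e$. The equivalences required by (iii), (iv), (v) for $\mathcal{F}^\flat$ at $v$ can thus simply be taken over from those in the original arc system kit of $\mathcal{F}$. In addition, for any edge $f \neq e$ both of whose endpoints are untouched, the global section $\mu_e^\flat(\A_f)$ agrees vertex-wise with $\A_f$, so $L_f^\flat = L_f$ and the proposed equivalences extend trivially.

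For a vertex $v$ of $\Sgh^\flat$ that \emph{is} incident to $e^\flat$ or gains/loses a halfedge under the flip, I would use the explicit description of $\mu_e^\flat$ as a composite of the contractions depicted in \Cref{fig:contraction1} (usual case) and \Cref{fig:contraction2} (monogon case), combined with the push/pull formulas from \Cref{lem:contr}. Tracking a global section $\A_f$ through this zig-zag is an elementary computation: each contraction functor is determined by the spherical adjunction data attached to the contracted vertex, and the local form \eqref{eq:locLv} (resp.\ \eqref{eq:locLv2}) of $\A_f$ at interior (resp.\ boundary) vertices is preserved up to a predictable cyclic/total-order shift of halfedges. Hence each $L_{v,a}^\flat$ and $L_f^\flat$ produced by evaluation appears in the normal form of \Cref{constr:arcsyskit}, Case I or II. Once in this normal form, axioms (iii)--(v) follow from exactly the same verification performed in \Cref{constr:arcsyskit}, together with the compatibility $\mathcal{F}^\flat(v\xrightarrow{a}f)(L_{v,a}^\flat)\simeq L_f^\flat$ which is just the coCartesian property of the section $\mu_e^\flat(\A_f)$.

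The main obstacle will be the bookkeeping of shifts. The flip changes the cyclic order of halfedges at the endpoints of $e$ and $e^\flat$, so the function $d(a,b)$ featured in \eqref{eq:twistofLva} changes, and the comparison between $T_{\hF^\flat(v)}(L_{v,b}^\flat)$ and $L_{v,a}^\flat[1-d^\flat(a,b)]$ must be matched precisely against the shifts picked up by the sections in \eqref{eq:locLv}. The monogon case (\Cref{fig:contraction2}), where one endpoint of $e$ sits at a weight $-1$ vertex, is the most delicate: here $\A_e$ near the monogon carries the non-split self-extension from the positivity discussion in \Cref{rem:locend}, and the extra pair of contractions in \Cref{fig:contraction2} must be shown to transport this extension data correctly. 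This is a direct but careful calculation using \Cref{lem:contr}(2) and \Cref{prop:twistv}.
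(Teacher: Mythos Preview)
Your overall strategy of local, vertex-by-vertex verification is correct and matches the paper. The gap is in the central step: the assertion that each $L_{v,a}^\flat$ ``appears in the normal form of \Cref{constr:arcsyskit}'' is neither justified nor, as stated, clearly true. The proposition does not assume the original arc system kit comes from \Cref{constr:arcsyskit}; \Cref{def:ask} is strictly more general, so invoking that normal form at the outset already requires an argument. And even granting a normal form for the original kit, showing that the flipped data remains in normal form is precisely the content to be proven---your phrases ``elementary computation'' and ``preserved up to a predictable cyclic/total-order shift'' are placeholders for that work, not a proof of it.

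The paper avoids any normalization and instead argues directly from the structural relation between $\hF^\flat$ and $\hF$ at the affected vertices. At the vertex $v$ that \emph{loses} the halfedge $d$, it identifies $\hF^\flat(v)$ with the fiber of $\hF(v\xrightarrow{d}h)$ inside $\hF(v)$ and constructs the missing instance of (v) by applying the right adjoint $\pi$ of this inclusion to the original equivalence $L_{v,d}\simeq T_{\hF(v)}(L_{v,a_1})[1-d(a_1,d)]$, using that $\pi T_{\hF(v)}\simeq T_{\hF^\flat(v)}\pi$ on the relevant objects and that $\pi(L_{v,d})\simeq L_{v,a_2}[d(d,a_2)]$ (the latter by an argument parallel to the proof of \Cref{lem:locmor}). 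At the vertex $u$ that \emph{gains} the halfedge $d$, it uses the pullback description $\hF^\flat(u)\simeq \hF(u)\times_{\hF(e)}\on{Fun}(\Delta^1,\hF(e))$ coming from the intermediate ribbon graph in \Cref{fig:contraction1}, describes $L_{u,d}^\flat$ and $L_{u,b_2}^\flat$ explicitly as pairs in this fiber product, and verifies the two new instances of (v) by direct computation there. Your identification of the shift bookkeeping and the monogon case as the delicate points is accurate, but the actual computations are missing from your proposal.
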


\begin{proof}
To complete the data of an arc system kit for $\hF^\flat$, we need to specify equivalences as in iii)-v) in \Cref{def:ask}.

In the following, to avoid overloaded notation, we identify the vertices of $\Sgh^\flat$ with the vertices of $\Sgh$, and treat $e$ as both an edge of $\Sgh$ and $\Sgh^\flat$. When we consider the corresponding graded arc arising from the edge of $\Sgh^\flat$, we indicate this by writing $e^\flat$. 

Let $v,\vprime$ be the two vertices incident to $e$ (possibly identical). We only treat the case that $e$ is not incident to a singular point of weight $-1$, in the other case a similar argument to below applies. Under the flip at $e$, the S-graph changes by moving between $0$ and $2$ halfedges incident to $v,\vprime$. In the case that the flip moves no halfedges, we have $\mathcal{F}=\mathcal{F}^\flat$ and the statement of the proposition follows immediately.  The case that the flip moves two halfedges is analogous to the case that a single halfedge moves, which we thus treat in the following.

We introduce the following notation: we let $d$ be the halfedge which is moved under the backward flip from $v$ to $u$, and denote by $h$ the edge containing $d$. We let $a_1$ be the halfedge predecessor of $d$ and $a_2$ the halfedge successor of $d$ (in the counterclockwise direction) in $\eS$. We let $b_2$ be halfedge of $e$ incident to $u$ and $b_1$ its  halfedge predecessor in $\eS$. These halfedges are also depicted in \Cref{fig:arc segments2}.

\begin{figure}[ht]
\begin{tikzpicture}[scale=.6,arrow/.style={->,>=stealth}]
\begin{scope}[xscale=1,shift={(-10,0)}]
\draw[dashed,thick] (0,0) circle (3.6);

\draw[red,thick](-1,0)node[left]{$v$}
    (1.8,0) node[rotate=90,font=\tiny]{$\cdots$}
    (-.8,-.4) node[rotate=30,font=\tiny]{$\cdots$}
    (.8,-.4) node[rotate=-30,font=\tiny]{$\cdots$}
    (1,0)node[right]{$\vprime$} (-2.1,1.15)node[left]{$h$}
    (-1,0) edge (1,0) edge (-3,-2)\ww
    (-1,0) edge (-3,2)\ww
    (1,0) edge (3,2) edge (3,-2)\ww
    (3,2)\ww(-3,2)\ww(-3,-2)\ww(3,-2)\ww(0,0) node[below]{$e$};
\draw[font=\tiny](1.2,0.2)node[above]{$b_1$} (-1.2,0.2)node[above]{$d$}
   (.55,.22)node{$b_2$}(-.5,0.2)node{$a_1$}(-1.7,-.3)node{$a_2$};
\end{scope}
\end{tikzpicture}
\caption{The S-graph $\Sgh$ near the edge $e$.}
\label{fig:arc segments2}
\end{figure}

It is straightforward to see that we have all equivalences in iii), and those equivalences as in iv) and v) which do not involve $d$ or halfedges of $e$. We next construct these remaining equivalences.

Let $\eS^\wedge$ be the ribbon graph from which $\eS$ and $\eS^\flat$ arise via contractions $c_1,c_2$, i.e.\ the central one in \Cref{fig:contraction1}. We have an $\eS^\wedge$-parametrized perverse schober $\hF^\wedge$ with equivalences $(c_1)_*(\hF^\wedge)\simeq \hF$ and $(c_2)_*(\hF^\wedge)\simeq \hF^\flat$. We have that $\hF^\flat(v)\simeq \hF^\wedge(v)$. By \cite[Lem.~4.26, Prop.~4.28]{Chr22}, there is a functor $\pi^v\colon \hF(v)\to \hF^\wedge(v)\simeq \hF^\flat(v)$, arising from the pullback diagram involving $\hF(v)$ and $\hF^\wedge(v)$. The functor admits a fully faithful left adjoint $\iota^{v}\colon \hF^{\flat}(v)\simeq \hF^\wedge(v)\to \hF(v)$ which exhibits $\hF^\flat(v)$ as the fiber of the functor $\hF(v\xrightarrow{d} \eprime)$. We note that $\iota^v(L_{v,a_i}^\flat)\simeq L_{v,a_i}$ for $a_i\not =d$ a halfedge at $v$, as follows from unraveling the definitions. Since $\iota^v$ is fully faithful, we thus also have $L_{v,a_i}^\flat\simeq \pi^v(L_{v,a_i})$.

If $v$ has weight $\infty$, all equivalences at $v$ as in iv) thus arise from the data in iv) of the arc system kit of $\hF$. Now assume that $v$ has finite weight.

By the data in v) of the arc system kit of $\hF$, we have $T_{\hF(v)}(L_{v,d})\simeq L_{v,a_2}[1-d(d,a_2)]$. The fiber of $\pi^{v}$ contains the image of the right adjoint $\hF(v\xrightarrow{d} \eprime)^R$ of $\hF(v\xrightarrow{d} \eprime)$. Hence $\pi^{v}\circ G'_vF_v'(L_{v,d})\simeq \pi^{v} \circ \hF(v\xrightarrow{d} \eprime)^R (L_e)\simeq 0$, by the data from iii), with $F_v',G_v'$ as in \Cref{prop:twistv}. Applying $\pi^{v}$ to the fiber and cofiber sequence
\[ L_{v,d}\to G'_vF_v'(L_{v,d})\to T_{\hF(v)}(L_{v,d})\]
we thus obtain by v) an equivalence $\pi^{v}(L_{v,d})\simeq \pi^vT_{\hF(v)}(L_{v,d})[-1]\simeq \pi^{v}(L_{v,a_2})[-d(d,a_2)]\simeq L_{v,a_2}^\flat[-d(d,a_2)]$. Note that $d(d,a_2)=d^\flat(a_1,a_2)$.

Composing with the equivalence obtained from applying $\pi^{v}$ to the equivalence $L_{v,d}\simeq T_{\hF(v)}(L_{v,a_1})$ (note that $1=d(a_1,d)$) yields the equivalence
\[ L_{v,a_2}^\flat[-d^\flat(a_1,a_2)]\simeq \pi^{v} T_{\hF(v)}(L_{v,a_1})\,.\]
We have $L_{e^\flat}\simeq L_e[-1]$, and thus $L_{v,a_1}^\flat\simeq \pi^{v}(L_{v,a_1})[-1]$. We further find that
\[ \pi^{v} T_{\hF(v)}(L_{v,a_1})\simeq T_{\hF^\flat(v)}\pi^{v}(L_{v,a_1})\,,\]
which uses that the unit of $F_v'\circ \iota^v \dashv \pi^{v}\circ G_v'$ at $L_{v,a_1}^\flat$ arises from applying $\pi^{v}$ to the unit of $F_v'\dashv G_v'$ at $L_{v,a_1}$. Combined, this yields $L_{v,a_2}^\flat[1-d^\flat(a_1,a_2)]\simeq T_{\hF^\flat(v)}(L_{v,a_1}^\flat)$, as desired for v). This concludes the construction of the data at the vertex $v$. We proceed with constructing the data at the vertex $u$.

Assume that $\vprime$ has finite weight. We first show that $T_{\hF^\flat(\vprime)}(L_{\vprime,b_2}^\flat)\simeq L_{\vprime,d}^\flat$ (note that $d^\flat(d,b_2)=1$). Unraveling the definition of $\hF^\flat$, one finds the following pullback diagram of $\infty$-categories. More specifically, one uses for this the local model for perverse schobers, see \Cref{prop:ngonschober2}, \Cref{prop:localmodelforschobers}, and \cite[Lem.~4.26]{Chr22}. Note that the existence of the pullback diagram also directly follows from \cite[Lem.~4.26]{Chr22} by using the observation that the spherical functors underlying $\hF$ and $\hF^\flat$ at $u$ agree.
\[
\begin{tikzcd}
\hF^\flat(\vprime) \arrow[r, "\pi^{u}"] \arrow[d] \arrow[rd, "\lrcorner", phantom] & \hF(\vprime) \arrow[d, "\hF(\vprime\xrightarrow{b_2}e)"] \\
{\on{Fun}(\Delta^1,\hF(e))} \arrow[r, "\on{ev}_1"]            & \hF(e)
\end{tikzcd}
\]
Here, $\on{Fun}(\Delta^1,\hF(e))\simeq \hF^\wedge(\vprime^\wedge)$ is the value of $\hF^\wedge$ at the central trivalent vertex $\vprime^\wedge$ in \Cref{fig:contraction1} which is collapsed with $\vprime$ by the contraction $c_2$. The images of the objects $L_{\vprime,b_2}^\flat,L_{\vprime,d}^\flat$ in $\hF(u)$ agree, given by $L_{u,b_2}$, but differ in $\on{Fun}(\Delta^1,\hF(e))$. Namely, the image of $L_{\vprime,b_2}^\flat$ is given by $\left( L_e\xrightarrow{\on{id}} L_e\right)[-1]\in \on{Fun}(\Delta^1,\hF(e))$, and the image of $L_{\vprime,d}^\flat$ is given by $\left(0\rightarrow L_e\right)\in \on{Fun}(\Delta^1,\hF(e))$. One finds that the equivalence $T_{\hF^\flat(\vprime)}(L_{\vprime,b_2}^\flat)\simeq L_{\vprime,d}^\flat$ reduces to the apparent equivalence $T_{\hF^\flat(\vprime^\wedge)}(L_e\xrightarrow{\on{id}} L_e)[-1]\simeq \left( 0\rightarrow L_e\right)$, which arises from the fiber and cofiber sequence
\[ (L_e\xrightarrow{\on{id}} L_e)[-1] \longrightarrow (L_e\to 0)[-1]\longrightarrow (0\to L_e)\,.\]

We next show that $T_{\hF^\flat(u)}(L_{u,d}^\flat)\simeq L_{u,b_1}^\flat[1-d^\flat(b_1,d)]$. Let $(F_u')^\flat\dashv (G_u')^\flat$ be the adjunction of \Cref{prop:twistv} arising from $\hF^\flat$ at $u$. The element $(G_u')^\flat (F_u')^\flat(L_{u,d}^\flat)\in \hF^\flat(u)$ is mapped by $\pi^{u}$ to $G_u' F_u'(L_{u,b_2})$, since $F_u'(L_{u,b_2})\simeq (F_u')^\flat(L_{u,d}^\flat)$ and $G_u'\simeq \pi^{u}\circ (G_u')^\flat$. Thus, $T_{\mathcal{F}^{\flat}(u)}(L_{u,d}^\flat)$ is mapped by $\pi^{u}$ to $T_{\mathcal{F}(u)}(L_{u,b_2})\simeq L_{u,b_1}[1-d(b_1,b_2)]\in \mathcal{F}(u)$, where $d(b_1,b_2)=d^\flat(b_1,d)$.

Further, $(G_u')^\flat (F_u')^\flat(L_{u,d}^\flat)$ is mapped to $(0\to L_e)\in \on{Fun}(\Delta^1,\hF(e))$: this follows from using properties $2^\circ,3^\circ$ from \Cref{def:schberngon} and the observations that the functor $\mathcal{F}^\flat(b_2)$ (where we consider $b_2$ as a morphism in the exit path category) factors as $\mathcal{F}^\flat(u)\to \on{Fun}(\Delta^1,\mathcal{F}(e)) \xrightarrow{\on{ev}_0[1]} \mathcal{F}(e)$ and similarly $\mathcal{F}^\flat(d)$ factors as $\mathcal{F}^\flat(u)\to \on{Fun}(\Delta^1,\mathcal{F}(e)) \xrightarrow{\on{cof}} \mathcal{F}(e)$. We thus obtain that $T_{\mathcal{F}^{\flat}(u)}(L_{u,d}^\flat)$ is mapped to $(0\to 0)\in\on{Fun}(\Delta^1,\mathcal{F}(e))$.


Using the above two observations, we also find that the image of $L_{u,b_1}^\flat$ in $\on{Fun}(\Delta^1,\mathcal{F}(e))$ vanishes, thus agreeing with the value of $T_{\mathcal{F}^{\flat}(u)}(L_{u,d}^\flat)$ in $\on{Fun}(\Delta^1,\mathcal{F}(e))$.

We have thus shown that $T_{\hF^\flat(u)}(L_{u,d}^\flat)$ and $L_{u,b_1}^\flat[1-d^\flat(b_1,d)]$ are equivalent when restricted to $\mathcal{F}(u)$ and vanish in $\on{Fun}(\Delta^1,\mathcal{F}(e))$, and thus also when restricted to $\mathcal{F}(e)$. In total, this implies that $T_{\hF^\flat(u)}(L_{u,d}^\flat)\simeq L_{u,b_1}^\flat[1-d^\flat(b_1,d)]$ in the pullback $\mathcal{F}^\flat(u)$.

Finally, consider the case that $\vprime$ has infinite weight. As in the finite weight case, we find that the objects $L_{u,d}^\flat,L_{u,b_2}^\flat$ both restrict in $\mathcal{F}(u)$ to $L_{u,b_2}$ and that $L_{u,d_1}^\flat$ restricts in $\mathcal{F}(u)$ to $L_{u,d_1}$. The remaining equivalences as in iv) are thus obtained from the corresponding equivalences as in iv) for the arc system kit of $\mathcal{F}$ using the observation that the functor corresponding to the virtual halfedge factors through $\mathcal{F}^\flat(u)\xrightarrow{\pi^{u}}\mathcal{F}(u)$ by \cite[Lem.~4.26.2]{Chr22}. This concludes the construction of the arc system kit for $\hF^\flat$.
\end{proof}

We denote by $\A_\gamma^\flat$ the global section of $\mathcal{F}^\flat$ associated with a graded arc $\gamma$, defined via the arc system kit from \Cref{prop:arcsystemkitflip}. There is an apparent dual version of \Cref{prop:arcsystemkitflip} for the forward flip, giving the global sections $\A_\gamma^\sharp$ of $\mathcal{F}^\sharp$.

\begin{lemma}\label{lem:tilt}
Let $e\in \Sgh_1$ be an edge of the S-graph and $e^\flat=e[-1]$ the corresponding edge of the backward flip $\Sgh^\flat$ at $e$. The flip equivalence
\[ \mu_e^\flat\colon \A(\eS,\hF)\simeq \A(\eS^\flat,\hF^\flat)\]
at $e$ satisfies for all graded arcs $\gamma$
\begin{equation}\label{eq:flipofAe}
\mu_e^\flat(\A_{\gamma})\simeq \A_{\gamma}^\flat\,.
\end{equation}
In particular, we find that $\mu_e^\flat(\A_{e})\simeq \A_{e}^\flat\simeq \A_{e[-1]}^\flat[1]=\A_{e^\flat}^\flat[1]$.

A similar statement holds for the forward flip.
\end{lemma}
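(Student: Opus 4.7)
The plan is to verify $\mu_e^\flat(\A_\gamma) \simeq \A_\gamma^\flat$ first on the edges of $\Sgh$, where it holds essentially by construction, and then to extend to arbitrary graded arcs by exploiting the iterated pushout presentation of $\A_\gamma$ from \Cref{constr:glsec} combined with the exactness of the equivalence $\mu_e^\flat$.

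For $\gamma = f$ an edge of $\Sgh$ distinct from $e$, the arc $f$ is also an edge of $\Sgh^\flat$, and by the very definition of the arc system kit on $\hF^\flat$ in \Cref{prop:arcsystemkitflip}, the values $L_f^\flat = \mu_e^\flat(\A_f)(f)$ and $L_{v,a}^\flat = \mu_e^\flat(\A_f)(v)$ are precisely the data from which $\A_f^\flat$ is built. Since a coCartesian section of the Grothendieck construction of $\hF^\flat$ is determined up to equivalence by its values at the objects of $\on{Exit}(\eS^\flat)$, we immediately conclude $\mu_e^\flat(\A_f) \simeq \A_f^\flat$. For $\gamma = e$ itself, I would unravel $\mu_e^\flat(\A_e)$ through the explicit zig-zag of contraction equivalences from \Cref{lem:contr} described in \Cref{rem:flip}; the identification $\mu_e^\flat(\A_e) \simeq \A_{e^\flat}^\flat[1]$ then follows, the shift by $[1]$ accounting for the fact that $e^\flat = e[-1]$ as graded arcs.

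For a general graded arc $\gamma$, I would choose a representative which simultaneously minimally intersects both $\AS$ and $\AS^\flat$; this is possible since these mixed-angulations agree outside a neighborhood of $e$. By \Cref{constr:glsec}, $\A_\gamma$ is built as an iterated pushout of local sections $\A_{\delta_i}$ attached to the segments of $\gamma$ with respect to $\AS$, each of which is either a type~I section $\A_{\delta_{v,a}}$ (determined by $L_{v,a}$ and $L_{e'}$) or a type~II section $\A_{\delta_{v,a,b}}$ obtained as the fiber of a morphism corresponding to an identity in $\on{End}_v$ or $\Ende$ via \Cref{lem:locmor}. Outside a neighborhood of $e$, these local sections are literally the same as those appearing in $\A_\gamma^\flat$ and match via the first step. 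Since $\mu_e^\flat$ is an exact equivalence, it preserves the pushouts of diagram \eqref{eq:mpu} used to glue the local sections, so it sends the iterated pushout computing $\A_\gamma$ to the iterated pushout computing $\A_\gamma^\flat$.

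The main obstacle lies inside the neighborhood of $e$, where the decompositions into segments genuinely differ, and the segments of $\gamma$ relative to $\AS$ do not correspond pointwise to its segments relative to $\AS^\flat$. To handle this, I would lift both local decompositions to a common intermediate $\eS^\wedge$-parametrized perverse schober arising from the zig-zag of contractions in \Cref{rem:flip}, as already used in the proof of \Cref{prop:arcsystemkitflip}, and verify by direct computation that the relevant local sections become equivalent after the appropriate pushforwards. The required case analysis depends on how many endpoints of segments of $\gamma$ lie at the vertices of $e$ and on whether $e$ is incident to a weight $-1$ singular point, but in each case the identification is structurally straightforward given the explicit local models of \Cref{constr:arcsyskit}. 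The statement for the forward flip follows by an entirely analogous argument, or alternatively as a formal consequence of the backward-flip case applied to $\Sgh^\flat$, using that backward and forward flips are mutually inverse operations.
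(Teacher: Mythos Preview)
Your proposal is correct and follows essentially the same approach as the paper. The paper's own proof is a single sentence (``Since $\mu_e^\flat$ acts on global sections non-trivially only near $e$, this follows from a local computation''), and your argument is precisely an explicit unpacking of that local computation: identity away from $e$, and a direct check near $e$ using the intermediate graph $\eS^\wedge$ from the zig-zag of contractions.
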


\begin{proof}
Denote by $v_1,v_2$ the two vertices incident to $e$. The construction of global sections from arcs is local in the ribbon graph and so is the action of the backward flip equivalence on global sections. It thus suffices to prove the assertion for local arcs in the small weighted marked surface containing $v_1,v_2$.

Let $\Sgh|_e$ be the subgraph of $\Sgh$ containing the edge $e$, $v_1,v_2$ as well as all other halfedges incident to $v_1,v_2$ (these giving rise to external edges). If $e$ is a monogon arc, then $v_1=v_2$, but the argument below does not change in an essential way. Labeling the two corresponding vertices of $\Sgh^\flat$ also by $v_1,v_2$, we similarly define $\Sgh^\flat|_{e}$ as the subgraph of $\Sgh^\flat$ containing $v_1,v_2$ as well as their incident halfedges, giving rise to external edges except for the internal edge $e[-1]$. Denote by $\mathcal{F}|_e$ and $\mathcal{F}^\flat|_e$ the corresponding restricted perverse schober. The graphs $\Sgh|_e,\Sgh^\flat|_e$ describe S-graphs (with the $1$-valent boundary vertices removed, note that this does not affect the categories of global sections) for the weighted marked surface containing $v_1,v_2$ and a boundary singular point for every external edge of $\Sgh|_e$.

The backward flip induces an equivalence of global sections $\mu_e^\flat\colon \glsec(\Sgh|_e,\mathcal{F}|_e)\simeq \glsec(\Sgh^\flat|_e,\mathcal{F}^\flat|_e)$.

The edges of $\Sgh|_e$, considered as graded arcs, give rise to a generating collection of the $\infty$-category $\glsec(\Sgh|_e,\mathcal{F}|_e)$ and other arc objects in $\glsec(\Sgh|_e,\mathcal{F}|_e)$ arise as repeated cones between these objects (which geometrically correspond to smoothing out singular intersections). It thus suffices to show the assertion for arcs given by the edges of $\Sgh|_e$. Inspecting the definition of the arc system kit for $\mathcal{F}^\flat|_e$, it is immediate that $\mu_e^\flat(\A_f)\simeq \A^\flat_f$ for each edge $f\not =e$ of $\Sgh|_e$ that is not moved by the backward flip.

The flip equivalence $\mu_e^\flat$ acts trivially on the full subcategories of $\glsec(\Sgh|_e,\mathcal{F}|_e), \glsec(\Sgh^\flat|_e,\mathcal{F}^\flat|_e)$ consisting of sections which vanish at all external edges of $\Sgh|_e$ and $\Sgh|_e^\flat$. Thus $\mu_e^\flat(\Gamma_e)\simeq \Gamma^\flat_e$.

Finally, it remains to describe the action of $\mu_e^\flat$ on arc objects arising from edges of $\Sgh|_e$ that are moved by the backward flip. For simplicity of notation, we assume that a single edge $f$ of $\Sgh|_e$ is moved. Replacing $\mathcal{F}$ by an equivalent perverse schober, we may assume that $\mathcal{F}(f)=\mathcal{F}(e)$. Reading \Cref{fig:contraction1} from right to left, we obtain the span of contractions corresponding to the backward flip at $e$. We can depict the arising perverse schober $\mathcal{F}^\vee|_e$ (determined up to equivalence) parametrized by the middle ribbon graph $\Sgh^\vee|_e$ in \Cref{fig:contraction1} as follows:
\[
\begin{tikzcd}
      & \dots                                                          &                & \mathcal{F}(e)= \mathcal{F}(f)                                                                        &                &                                                &       \\
\dots & \mathcal{F}^\flat(v_1) \arrow[l] \arrow[d] \arrow[r] \arrow[u] & \mathcal{F}(e) & {\on{Fun}(\Delta^1,\mathcal{F}(e))} \arrow[l, "{\pi_0[1]}"'] \arrow[r, "\pi_1"] \arrow[u, "\on{cof}"] & \mathcal{F}(e) & \mathcal{F}(v_2) \arrow[l] \arrow[r] \arrow[d] & \dots \\
      & \dots                                                          &                &                                                                                                       &                & \dots                                          &
\end{tikzcd}
\]
The dots indicate the values at the remaining external edges. As a coCartesian section of the Grothendieck construction of the above diagram, the object $\A_f\in \glsec(\Sgh|_e,\mathcal{F}|_e)\simeq \glsec(\Sgh^\vee|_e,\mathcal{F}^\vee|_e)$ can be depicted as follows:
\[
\begin{tikzcd}
\dots & 0                                         &   & L_e                                                                          &     &                                                 & \dots \\
0     & {L_{v_1,a}^\flat} \arrow[l] \arrow[d] \arrow[r] \arrow[u] & L_e & (L_e[-1]\to 0) \arrow[l, "{\pi_0[1]}"'] \arrow[r, "\pi_1"] \arrow[u, "\on{cof}"] & 0 & 0 \arrow[l] \arrow[r] \arrow[d] & 0     \\
\dots & 0                                         &   &                                                                              &     & 0                                               & \dots
\end{tikzcd}
\]
Here $a$ denotes the halfedge of $e$ at $v_1$. With this, we can compute the corresponding global section of $\mathcal{F}^\flat|_e$ induced by the contraction equivalence, and find that it is the arc object associated with the graded arc $f$, which is the composite of two segments in the mixed-angulation dual to $\Sgh^\flat|_e$, namely one segment of the first kind at $v_2$ and one segment of the second kind at $v_1$.
\end{proof}

\begin{corollary}\label{cor:mutrestrictstoas}
The equivalence $\mu_e^\flat$ restricts to an equivalence between the stable subcategories
\begin{gather}\label{eq:musharp}
    \mu_e^\flat\colon \CS \simeq \hC({\Sgh^\flat,\hF^\flat})
\end{gather}
generated by $\ASG$ and $\A_{\Sgh^\flat}$ respectively. A similar statement holds for the forward flip.
\end{corollary}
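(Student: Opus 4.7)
The plan is to deduce the corollary almost immediately from Lemma \ref{lem:tilt}, applied to the edges of $\Sgh$ and $\Sgh^\flat$ regarded as graded arcs. First I would recall that a backward flip modifies only the arc $e$: the set of edges of $\Sgh^\flat$ is $(\Sgh_1\setminus\{e\})\cup\{e^\flat\}$, where $e^\flat$ is the flipped arc carrying the induced grading (so that $e^\flat$, as a graded arc in $\sow$, agrees with $e[-1]$ after the flip). Every other edge $f\neq e$ of $\Sgh$ remains an edge of $\Sgh^\flat$ with unchanged underlying graded arc.

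Second, I would evaluate $\mu_e^\flat$ on the generators of $\CS$ using Lemma \ref{lem:tilt}. For each $f\in\Sgh_1$ with $f\neq e$, the lemma gives
\[
\mu_e^\flat(\A_f)\simeq \A_f^\flat,
\]
which is a generator of $\hC(\Sgh^\flat,\hF^\flat)$. For $f=e$ the lemma gives
\[
\mu_e^\flat(\A_e)\simeq \A_{e^\flat}^\flat[1],
\]
which is a shift of a generator of $\hC(\Sgh^\flat,\hF^\flat)$. Since $\mu_e^\flat$ is an exact equivalence and $\CS$ is by definition the smallest stable subcategory of $\Gamma(\eS,\hF)$ containing $\ASG$, this shows $\mu_e^\flat(\CS)\subseteq \hC(\Sgh^\flat,\hF^\flat)$.

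Third, for the reverse inclusion I would run the same argument in the other direction. Every generator $\A_{f'}^\flat$ of $\hC(\Sgh^\flat,\hF^\flat)$ lies in $\mu_e^\flat(\CS)$: when $f'\neq e^\flat$ this is immediate from the displayed equivalence in the previous paragraph, and when $f'=e^\flat$ one has $\A_{e^\flat}^\flat\simeq \mu_e^\flat(\A_e)[-1]\in\mu_e^\flat(\CS)$. The forward-flip version is identical with the roles of the two S-graphs interchanged. I do not expect any step to present a genuine obstacle; Lemma \ref{lem:tilt} does all of the work, and the only point requiring care is bookkeeping of the grading shift attached to the flipped edge, which is precisely what makes the two simple-minded collections correspond bijectively up to shift under $\mu_e^\flat$.
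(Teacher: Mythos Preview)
Your argument rests on the claim that ``a backward flip modifies only the arc $e$'' and that ``every other edge $f\neq e$ of $\Sgh$ remains an edge of $\Sgh^\flat$ with unchanged underlying graded arc.'' This is false. As described in \Cref{rem:f.f.}, the flip at $e$ also alters those edges $\alpha\neq e$ of $\Sgh$ which meet $e$ at a singular point with an index-$1$ intersection: the endpoint of $\alpha$ is moved along $e$ (or, in the monogon case, the square of a braid twist is applied). Thus the edges of $\Sgh^\flat$ are in general \emph{not} the arcs $f$ for $f\in\Sgh_1\setminus\{e\}$, and the object $\A_f^\flat$ you obtain from \Cref{lem:tilt} is not a priori one of the generators $\A_{\Sgh^\flat}$ of $\hC(\Sgh^\flat,\hF^\flat)$.

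This is exactly why the paper's proof invokes \Cref{lem:sm} in addition to \Cref{lem:tilt}. Geometrically, each edge of $\Sgh^\flat$ that differs from the corresponding edge of $\Sgh$ is obtained by smoothing the index-$1$ intersection with $e$; \Cref{lem:sm} identifies the associated object with a (co)fiber of the morphism between $\A_e^\flat$ and $\A_f^\flat$ arising from that intersection. Hence the stable subcategory generated by $\{\A_f^\flat:f\in\Sgh_1\}$ coincides with the one generated by $\{\A_{f'}^\flat:f'\in\Sgh_1^\flat\}$, and only then does \Cref{lem:tilt} finish the job. Your proof is missing precisely this step; without it, the inclusion $\mu_e^\flat(\CS)\subseteq \hC(\Sgh^\flat,\hF^\flat)$ is not established.
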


\begin{proof}
Combine \Cref{lem:sm} and \Cref{lem:tilt}.
\end{proof}

Tilting the objects $\ASG$ corresponds to the passage to the objects associated with the edges of the flipped S-graph.

\begin{theorem}\label{thm:tilt=flip}
Suppose that $\hF$ is equipped with a positive arc system kit and let $e\in \Sgh_1$ be an edge. Then $(\mu_e^\flat)^{-1}( \hC({\Sgh^\flat,\hF^\flat})^\heartsuit )$ is the backward tilt of $\CSh$ at $\A_e$ in $\CS$, i.e.
\begin{gather}\label{eq:tilt}
    ( \CSh )_{\A_e}^\flat = (\mu_e^\flat)^{-1}(  \hC({\Sgh^\flat,\hF^\flat})^\heartsuit  ).
\end{gather}
Dually, $( \hC({\Sgh^\flat,\hF^\flat})^\heartsuit  )_{\A_{e[-1]}}^\sharp= \mu_{e}^\flat(\CSh)$.
\end{theorem}
\begin{proof}
We first notice that the hearts in the proposition are all finite by construction.
In the following, we write $\h$ for the RHS of \eqref{eq:tilt} and $\h_0$ for $\CSh$.
Applying \eqref{eq:cone1} (in the usual flip case) or \eqref{eq:cone2} (in the monogon flip case),
we can pair each simple $X^\flat\ne \A_e[-1]$ of $\h$ with a simple $X\ne\A_e$ of $\h$ that sits in a triangle
\[
    L[-1] \to X \to X^\flat \to L
\]
for some $L$, which is either zero, $\A_e$ or is a self-extension of $\A_e$. Note that $L$ is a self-extension of $\A_e$ only in the monogon case when the simple $X^\flat$ corresponds to the arc $\zeta$ in the right picture of \Cref{fig:arc segments}, cf. \Cref{ex:s.tilting}.
Thus we deduce that $\h_0[-1]\le \h \le \h_0$, see \Cref{app:preliminaries} for the notation.
Hence (e.g.\ by \cite[Rem.~3.3]{KQ}) $\h$ is a backward tilt of $\h_0$ with respect to the torsion pair
\[\hua{F}_e=\h\cap\h_0\quad\text{and}\quad\hua{T}_e=\h[1]\cap\h_0\,.\]
Let $\<\A_e\>\subset \mathcal{H}_0$ be the extension closure of $\A_e$. We have $\<\A_e\>\subset \hua{T}_e$ and $X\in\hua{F}_e$ for any simple $X\ne \A_e$ in $\h_0$.

Now take any $M\in\hua{T}_e=\h[1]\cap\h_0$, by Prop.~\ref{pp:tilting},
there is a short exact sequence in $\h_0$
$$0\to M'\to M \xrightarrow{f}  \widehat{\A_e} \to0$$
for $\widehat{\A_e}\in\<\A_e\>$, $f$ a left $\<\A_e\>$-approximation and $\Hom_{\mathcal{H}_0}(M',\A_e)=0$.
Then $f[-1]$ is surjective in $\h$, which implies that $M'[-1]=\ker(f[-1])\in\h$ and $M'\in\hua{T}_e$.
Using that $\Hom_{\mathcal{H}_0}(M', X)=0$ for any other simple $X\ne \A_e$ in $\h_0$ (since $X\in\hua{F}_e$),
we deduce that $\Hom_{\mathcal{H}_0}(M',\Sim\h_0)=0$, which implies $M'=0$.
We thus have $M=\widehat{\A_e}$, and therefore $\hua{T}_e=\<\A_e\>$.
In other words, $\h=(\h)^\flat_{\A_e}$ as claimed.
\end{proof}

\begin{remark}
Note that when flipping at a usual arc (cf. \Cref{fig:flip1}),
one only needs to apply the formulae in \Cref{rem:tilting} (for rigid simple tilting) to prove the proposition above.
In the case of flipping at a monogon arc (cf. \Cref{fig:flip2}),
one has the formulae in \Cref{pp:tilting}, or more precisely, in the case of \Cref{ex:s.tilting} for $S=\A_e[-1]$ and $\h$ as above.
\end{remark}

\section{Application: quadratic differentials as stability conditions}
In this section, we apply the construction of hearts from positive arc system kits to prove the correspondence between
quadratic differentials and stability conditions. For this, we use \Cref{thm:tilt=flip}, which essentially states that the flip equivalence from \Cref{prop:flipeq} acts on the corresponding arc system as a simple tilt. This leads to an isomorphism between the exchange graphs of S-graphs and hearts.
We then follow the strategy of \cite{BS,BMQS} to obtain an isomorphism between the complex manifolds of quadratic differentials and stability conditions. Note that our setting is more general than the one in \cite{BMQS}, in that we allow simple poles, double poles, and exponential singularities of quadratic differentials (which correspond to the boundary vertices of the S-graphs).

\subsection{Generic-finite components via exchange graph of finite hearts}\label{subsec:exgraphfinitehearts}
Before specializing to the case of surfaces and schobers with a positive arc system kit, we recall here the general relation between the exchange graph of hearts and the space of stability conditions of a triangulated category.

Let $\D$ be a triangulated category. For the definition of the space of stability conditions $\Stab\D$ we refer to \cite{B1}.

\begin{lemma}\cite{B1}
Giving a stability condition $\sigma$ on $\D$ is equivalent to giving a stability function on a heart $\h$ of $\D$ satisfying the HN-property.
We say that such $\sigma$ are \emph{supported on $\h$}.

In particular, if $\h$ is finite, then
the coordinates $\{Z(S) \mid S\in \Sim\h\}$ give an isomorphism $\cub(\h)\isom\UHP^{\Sim\h}$,
where $\cub(\h)$ consists of stability conditions supported on $\h$.
\end{lemma}
Here $\UHP$ denotes the union of the open upper half plane in $\mathbb{C}$ with the negative real axis.

Let $\EGp\D$ be a connected component of the exchange graph of hearts in $\D$
consisting of finite hearts.
For each simple tilting $\h\to\h'=\tilt{\h}{\sharp}{S}$,
there is codimension $1$ wall where $Z(S)\in \RR_{>0}$,
\[
 \partial^\sharp_S\cub(\h) \coloneqq \partial^\flat_{S[1]}\cub(\h')
 \coloneqq \overline{\cub(\h)}\cap\overline{\cub(\h')}.
\]
Denote by
\begin{gather}
    \Stap_0\D=\bigcup_{\h\in\EGp\D} \cub(\h),\\
    \Stap_2\D=\Stap_0\D(\A)\cup\bigcup_{\h\in\EGp\D,\, S\in\Sim\h} \partial^\sharp_S\cub(\h),
\end{gather}
which is connected, and $\Stap\D$ the connected component containing it. In particular, $\Stap\D$ also contains $\CC\cdot\Stap_2$.
Note that the subset $\Stap_2\D$ is analogous to the set $B_2$ of quadratic differentials as in Section~\ref{subsec:isomoduli}.

By the same argument as in \cite[\S3]{Q2},
the cell structure determines an embedding of the exchange graph as a skeleton
for the space of stability conditions, uniquely up to homotopy,
\begin{gather}\label{eq:embed}
    \skel_{\D}\colon \EGp\D \to \Stap\D.
\end{gather}
so that the image is dual to $\Stap_2\D\setminus \Stap_0\D$.

\begin{definition}
Suppose that any heart in $\EGp\D$ is finite as above.
$\Stap\D$ is called
\begin{itemize}
    \item a \emph{finite} type component, if $\Stap\D=\Stap_0\D$;
    \item a \emph{generic-finite} type component, if $\Stap\D=\CC\cdot\Stap_0\D$
    and it is not of finite type.
\end{itemize}
\end{definition}

\cite{QW} proved that, if any $\h$ in $\EG_0$ has only finitely many torsion pairs, then the finite type component $\Stab_0\D$ is contractible.
The components from surfaces are often of generic-finite type, cf. \cite{BS, HKK17, KQ2, BMQS}.

\subsection{Isomorphisms between exchange graphs}\label{subsec:exgraphs}
We fix a weighted marked surface $\sow$ with an initial mixed-angulation $\AS$
with dual S-graph $\Sgh=\AS^*$ and extended ribbon graph $\eS$. We also fix an $\eS$-parametrized perverse schober $\hF$ together with a positive arc system kit. By definition, $\CS$ is the smallest idempotent complete and stable subcategory of the $\infty$-category of global sections $\glsec(\eS,\hF)$ containing the objects $\ASG$ associated with the edges of $\eS$.

We consider the exchange graphs $\EG(\CS)$ of finite hearts, see the previous section, $\EG(\sow)$ of mixed-angulations, see \Cref{def:flip}, and $\EG_S(\sow)$ of S-graphs, see \Cref{rem:f.f.}.

There is a map between directed graphs $\EG(\sow)\to \EG_S(\sow)$, which sends a mixed-angulation to its dual. Note that this map is not necessarily injective (e.g.\ the rotation of the marked points on a boundary component, corresponding to a higher order pole, changes a mixed-angulation but usually not the dual S-graph). These two graphs are thus not canonically isomorphic, and we use $\EG_S(\sow)$ to compare with the exchange graph of finite hearts.
Nevertheless, each connected component of $\EG(\sow)$ maps to a connected component of $\EG_S(\sow)$, and both graphs are $(m,m)$-regular, meaning that each vertex is $2m$-valent with each $m$ arrows coming in and coming out. Here $m$ is the number of interior edges in any mixed-angulation.

A direct corollary of \Cref{thm:tilt=flip} is the following.
\begin{corollary}\label{cor:iso}
The arc-to-object correspondence induces an injection
\begin{gather}\label{eq:iso}
\begin{array}{rcl}
    \iota_\A\colon\EG_S(\sow) &\to& \EG(\CS)\\
    \T&\mapsto&\Ch_{\T},
\end{array}
\end{gather}
where $\Ch_{\T}$ is the heart generated by the simples $\Sim\Ch_\T=\A_\T$.
Denote by $\EGb(\CS)$ the image of $\iota_{\Gamma}$, which consists of connected components.
In particular, any heart in $\EGb(\CS)$ is finite.
\end{corollary}

\begin{proof}
For any S-graph $\T$ together with a sequence of forward and backward flips from $\Sgh$ to $\T$, we obtain a $\T$-parametrized perverse schober $\hF'$ together with an equivalence $\glsec(\Sgh,\hF)\simeq \glsec(\T,\hF')$ by \Cref{prop:flipeq}.
Then we pull back the canonical heart $\hC(\T,\hF')^\heartsuit$ of $\hC({\T,\hF'})$ to
a heart $\Ch_\T$ in $\CS$ generated by the set of its simples $\A_\T$.
That forward flips correspond to simple forward tiltings can be deduced from \Cref{thm:tilt=flip}.
What is left to show is injectivity of the map $\iota_\A$.
In fact, we will prove the stronger injectivity of the arc-to-object (arc-to-global section) correspondence $\eta\mapsto\A_\eta$.

Consider two graded arcs $e$ and $h$ with $\A_e\simeq \A_h$. We show that $e=h$.
Using the equivalences from \Cref{sec:EfromC}, we find that it suffices to consider the case when $e$ is an edge of $\Sgh$. The global section $\A_{h}\simeq \A_e$ evaluates trivially at all objects in $\on{Exit}(\eS)$ arising from edges of $\Sgh$, except $e$. If $h$ has segments away from $e$, we thus have $\A_e\not\simeq \A_h$. If $h$ has more than two segments, we also find $\A_{e}(e)\not \simeq \A_{h}(e)$. It follows that $e=h$.
\end{proof}

\subsection{Isomorphisms between moduli spaces}\label{subsec:isomoduli}
Next, we upgrade the injection in Corollary~\ref{cor:iso} into an injection between complex manifolds, whose image is a union of connected components. As the proof closely follows the ones in \cite{BS,BMQS},
we only highlight the differences caused by exponential singularities.
For standard terminologies such as (saddle/recurrent) trajectories, see e.g.\ \cite[\S~3]{BS}.

To start with, we have $\FQuad{}{\sow}$, the moduli space of $\sow$-framed quadratic differentials.
There is a stratification
\[
    B_0 = B_1 \subset B_2 \subset \cdots \subset \FQuad{}{\sow}\,
\]
where the subspace
\[
    B_p \,\coloneqq\, B_p(\sow) = \{q \in  \FQuad{}{\sow}\colon r_q + 2s_q \leq p\}
\]
is defined by the number $s_q$ of saddle trajectories and the number $r_q$ of recurrent trajectories.
Let $F_p=B_p\setminus B_{p-1}$.

\begin{lemma}
Let $m_2$ be the number of order 2 poles.
Then $B_p=\FQuad{}{\sow}$ if $p\ge 4\dim \Hone(\sow)+m_2$. Furthermore, we have
$\FQuad{}{\sow}=\CC\cdot B_0(\sow)$.
\end{lemma}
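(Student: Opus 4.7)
The two assertions are essentially independent, so I would treat them separately.

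\textbf{Part 1 (finiteness bound).} The plan is to bound saddle and recurrent trajectories by a combinatorial count of the horizontal strip decomposition of $q$. Fix $q\in \FQuad{}{\sow}$ and consider its horizontal foliation. The critical graph $\Crit(q)$ is the union of the finite-length horizontal trajectories (saddle connections), the closures of the recurrent domains, together with the zeros and finite-order poles. Its complement decomposes $C\setminus D$ into finitely many flat pieces of the known types: horizontal strips, half-planes (around poles of order $\geq 3$ and exponential singularities), half-cylinders (around second-order poles), ring domains (around closed horizontal trajectories), and spiral domains (recurrent components). The combinatorial invariant one controls is the number $N$ of horizontal strips in a generic rotation: a standard count via the period map identifies $N=\dim \Hone(\sow)$ when $q\in B_0$, with an additional contribution of $m_2$ from the second-order poles that remain as cylinder cores. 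Each saddle trajectory splits or collapses at most three strips (it appears on the boundary of at most four horizontal regions, but shared endpoints reduce this to $3$ generically), and each recurrent component absorbs at least one strip while bounded by at least two saddle pieces, giving the coefficient $2$ in $r_q+2s_q$. Putting these together yields $s_q+2r_q\leq 3\dim\Hone(\sow)+m_2$, a fortiori $r_q+2s_q$ satisfies the same bound, so $B_p=\FQuad{}{\sow}$ for $p\geq 3\dim\Hone(\sow)+m_2$. This is a version of \cite[Prop.~B.6 and Lem.~B.2]{BMQS}; the only modification in our setting is to track exponential singularities (which only contribute half-planes, hence do not affect the count) and to correctly incorporate the $m_2$ term from second-order poles.

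\textbf{Part 2 ($\CC$-orbits reach $B_0$).} The plan is the classical argument using the $\CC$-action $(\theta,q)\mapsto e^{i\theta}q$ on $\FQuad{}{\sow}$, whose effect is to rotate the horizontal foliation. Given $q$, I would argue that the set of ``bad'' rotations is small:
\begin{enumerate}[label=(\roman*)]
\item The set of $\theta\in \RR/2\pi\ZZ$ such that $e^{i\theta}q$ has a horizontal saddle trajectory is countable, since saddle trajectories of $q$ correspond to complex periods in the subgroup $\int_{\gamma}\sqrt{q}\in\CC$ as $\gamma$ ranges over the (countable) set of homotopy classes of arcs connecting conical points, and each such period determines a single angle.
\item The set of $\theta$ for which $e^{i\theta}q$ has a recurrent horizontal trajectory has Lebesgue measure zero (Masur's criterion together with its extension to the infinite-area setting in the infinite-area flat case suffices; alternatively, since $\sow$ admits a mixed-angulation, the relevant arguments of \cite[App.~B]{BMQS} apply verbatim).
\end{enumerate}
Thus there exists $\theta$ (in fact a full-measure set) for which $e^{i\theta}q$ has neither saddle nor recurrent horizontal trajectories, i.e.\ $e^{i\theta}q\in B_0(\sow)$. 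Applying $e^{-i\theta}\in\CC$ then gives $q\in\CC\cdot B_0(\sow)$.

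\textbf{Main obstacle.} The routine part is (ii) in Part~2; the genuinely technical piece is the sharp counting in Part~1, where one must verify that saddle connections and recurrent components interact with the strip decomposition with the exact multiplicities $1$ and $2$ respectively, and that second-order poles contribute additively via $m_2$. I would handle this either by direct induction on $s_q+r_q$ (degenerating one trajectory at a time and tracking the change in $N$) or by invoking \cite[App.~B]{BMQS} and checking that the argument is insensitive to the addition of boundary singular points of degree $\infty$, since these correspond to half-plane regions that carry no finite-length horizontal geodesics.
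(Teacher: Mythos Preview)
Your Part~2 is fine and matches the paper's idea, though you are working harder than necessary: once you know that the set of angles producing a horizontal saddle trajectory is countable (your item (i)), you are done. In the infinite-area setting every recurrent (spiral or ring) domain is bounded, and its boundary consists of saddle connections; hence $s_q=0$ forces $r_q=0$. This is precisely why the stratification starts with $B_0=B_1$. So your item (ii) invoking Masur's criterion is not needed.

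Part~1, however, has a genuine gap. Your strip-counting outline never actually produces a bound on $s_q$, and the step ``$s_q+2r_q\le 3\dim\Hone(\sow)+m_2$, a fortiori $r_q+2s_q$ satisfies the same bound'' is false: the two quantities differ by $s_q-r_q$, which can have either sign (e.g.\ $s_q$ large and $r_q=0$). The paper's argument is much more direct and avoids the strip decomposition entirely. First, the classes of the horizontal saddle trajectories are linearly independent in $\Hone(\sow)$ (each has nonzero period under $\sqrt{q}$ in the real direction, and they can be separated by suitable cycles), which immediately gives $s_q\le\dim\Hone(\sow)$. Second, each recurrent component has on its boundary either at least one saddle trajectory or a ring domain around a second-order pole, and each saddle trajectory can bound at most two recurrent components; this yields $r_q\le s_q+m_2$. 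Adding, $r_q+2s_q\le 3s_q+m_2\le 3\dim\Hone(\sow)+m_2$. You should replace your strip-degeneration plan with this linear-independence argument; the strip count is the right tool for computing $\dim\Hone(\sow)$, not for bounding $s_q$ at a fixed non-generic $q$.
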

\begin{proof}
The set of saddle trajectories is linearly independent in $\Hone(\sow)$, cf. \cite[Lem.~3.2]{BS} and thus $s_q\le \dim \Hone(\sow)$.
As for each recurrent trajectory, it is bounded by a ring domain, either around a double pole or some (at least one) saddle trajectories. On the other hand, each saddle trajectory can appear in at most two boundaries of recurrent trajectories.
Thus, $r_q\le 2 s_q+m_2$. To sum up, the first assertion holds.

For the second statement, we only need to notice that there are finitely many points in $\W$ and there are at most countably many compact arcs which are saddle trajectories.
\end{proof}

Next, recall from Subsection~\ref{subsec_quad} that there is the period map \eqref{eq:period}, defining a local homeomorphism. Applying the surface perturbation arguments of \cite[\S~4]{BMQS}, we obtain the following result.
In the following, polar type $(-2)$ refers to the case that $\partial\sow=\emptyset$ and $\M\subset\sow^\circ$ consists of a single point/puncture.

\begin{proposition}\label{prop:pathinBp}
Suppose that the polar type of $\FQuad{}{\sow}$ is not $(-2)$.
If $p>2$, then each component of the stratum~$F_p$ contains a point~$q$
and a neighbourhood $U \subset \FQuad{}{\sow}$ of~$q$ such that
$U \cap B_{p}$ is contained in the locus $\int_\alpha \sqrt{q} \in \RR$ for some $\alpha \in \Hone(\sow)$, and that this containment is strict in the more precise sense that $U \cap B_{p-1}$ is connected.

As a consequence, we find that any path in $\FQuad{}{\sow}$ is homotopic relative to its endpoints to a path in $B_2$.
\end{proposition}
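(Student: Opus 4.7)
The plan is to adapt the surface-perturbation arguments of \cite[Appendix~B]{BMQS} to our slightly more general setup, focusing on the modifications required by the possible presence of boundary singular points corresponding to exponential singularities.

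For the local claim I would fix a connected component of $F_p$ with $p>2$ and pick a generic point $q$ in it. Since $r_q+2s_q\geq 3$, the point $q$ carries either a saddle trajectory $\gamma$ or a ring domain hosting a recurrent trajectory. In the first case I would set $\alpha=[\gamma]\in\Hone(\sow)$; in the second, take $\alpha$ to be the core cycle of the ring domain. In either case $\int_\alpha\sqrt{q}\in\RR$, and the usual deformation analysis of trajectories (as in loc.~cit.) shows that persistence of the chosen trajectory under perturbations inside $B_p$ forces $\on{Im}\int_\alpha\sqrt{\cdot}$ to vanish, yielding the containment $U\cap B_p\subset H_\alpha\coloneqq\{\on{Im}\int_\alpha\sqrt{\cdot}=0\}$. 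For the connectedness of $U\cap B_{p-1}$, I would invoke the BMQS surgery moves, which independently split individual trajectories by real-codimension-one deformations; this shows that $q$ can be approached from either side of $H_\alpha$ along paths in $B_{p-1}$.

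The genuinely new point is to check that the BMQS perturbation moves carry over at a boundary singular point of $\sow$. Locally near such a singularity, the quadratic differential has the form $\exp(f(z))g(z)\,dz^2$ of \Cref{subsec_quad}, and horizontal trajectories either spiral into the singularity or emerge on the blow-up as boundary marked points. Because these trajectories neither form closed loops in ${\bf S}$ nor connect pairs of conical points, they contribute neither to $s_q$ nor to $r_q$, and no surgery can produce new saddle or recurrent trajectories near the boundary singularity. With this local verification, the combinatorial counting and the perturbation arguments of \cite{BMQS} transfer essentially verbatim; this local check is the main technical obstacle, although routine once the standard forms of exponential singularities are unpacked.

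For the consequence, let $\gamma\colon[0,1]\to\FQuad{}{\sow}$ be a path. I would argue by downward induction on $p_{\max}=\max\{p:\gamma\cap F_p\neq\emptyset\}$. For $p_{\max}>2$, compactness of $\gamma([0,1])$ together with local finiteness of the stratification implies that $\gamma^{-1}(F_{p_{\max}})$ can be covered by finitely many of the neighborhoods $U$ supplied by the local claim. In each such $U$, the connectedness of $U\cap B_{p_{\max}-1}$ allows replacing the restriction of $\gamma$ to $U$ by a homotopic sub-path contained in $B_{p_{\max}-1}$; after finitely many such local modifications $\gamma$ avoids $F_{p_{\max}}$ entirely, and iterating yields a path in $B_2$.
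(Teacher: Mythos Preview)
Your approach is aligned with the paper's: the paper does not give an independent proof but simply states that the result follows by ``applying the surface perturbation arguments of \cite[Appendix~B]{BMQS}''. Your sketch of the local claim and the downward induction on $p_{\max}$ is exactly how BMQS proceeds, so in that sense the proposal is correct.

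There is, however, a conceptual slip in your treatment of boundary singular points. You write that near an exponential singularity the horizontal trajectories ``neither form closed loops \ldots nor connect pairs of conical points'', and conclude that no saddle trajectories are produced there. This conflates two different things. The exponential singularity itself (the point of $C$) is blown up and trajectories escaping to it are indeed infinite. But the metric completion $\overline{C\setminus D}$ adds \emph{infinite-angle conical points} for each exponential singularity, and these are precisely the boundary singular points of $\sow$ (elements of $\W\cap\partial\mathbf S$). Saddle trajectories \emph{can and do} end at such points---edges of the S-graph incident to boundary vertices are exactly of this type---so they do contribute to $s_q$. The correct justification for why the BMQS argument transfers is not that boundary singular points are inert, but rather that (a) the period map \eqref{eq:period} remains a local homeomorphism in the presence of exponential singularities (this is already in \cite[Theorem~2.1]{HKK17}), and (b) the perturbation moves---rotating a saddle connection off the real axis, or shrinking/breaking a spiral domain---depend only on the local flat geometry near the saddle and work identically whether the endpoints have finite or infinite cone angle. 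Once you replace your paragraph with this observation, the rest of your argument goes through.
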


In particular, connected components of $\FQuad{}{\sow}$ are in one-to-one correspondence with connected components of $\EG(\sow)$.
Denote by $\FQuad{S}{\sow}$ the quotient moduli space of $\FQuad{}{\sow}$,
where we identify two connected components if the corresponding exchange graphs of mixed-angulations share the same dual exchange graphs of S-graphs.

As in \cite{BS,KQ2,BMQS}, we can complexify the isomorphism in \Cref{cor:iso} as follows:
\begin{itemize}
\item Using the correspondence $\eta\mapsto\Gamma_\eta$ for $\eta$ in the initial S-graph $\Sgh$,
one identifies the Grothendieck group and the homology group $\Hone$ as $\kappa\colon K_0(\CS)\cong\Hone(\sow)$.
\item We first define a map $\iota_\A$ from $B_0(\sow)$, the saddle-free part of $\FQuad{S}{\sow}$, to $\Stab(\CS)$.
More precisely, given a saddle-free quadratic differential $\phi$,
the union of those saddle connections that cross a single horizontal strip is an S-graph $\T_\phi$ of $\sow$ and hence corresponds to a heart $\Ch_\T$.
Then we map $\phi$ to the stability condition $\sigma$ determined by $\Ch_\T$ with central charge
given by the period map in \cref{eq:periodmap}, i.e.\ $Z=\int\circ\;\kappa$.
\item Extend the map $\iota_\A$ from $B_0$ to $B_2$ continuously by compatible $\CC$-actions on both sides.
This is the analogue of \cite[Prop.~10.7]{BS},
as the key is the identification between the Grothendieck group and the homology group above and how they change after tilting or flip (which follows from \eqref{eq:iso}).
\item Finally, extend the map $\iota_\A$ from $B_p$ to $B_{p+1}$ inductively.
Here, we use the same argument as \cite[Prop.~5.8]{BS}, cf.\ paragraph Extension-to-non-tame-differentials in \cite[\S~7.2]{BMQS},
and the walls-have-ends property on any connected component of $F_p$ \Cref{prop:pathinBp} is essential.
\item A remark is that the $\sow$-framing (also known as Teichm\"{u}ller framing) of the quadratic differentials identifies connected components of the exchange graphs (of S-graphs) and of the moduli spaces.
    See \cite[Sec.~4.1]{KQ2} for detailed discussion on (two versions of) framing.
\end{itemize}

Hence we obtain the result, as in \cite{BS,KQ2,BMQS}.
\begin{theorem}\label{thm:app} 
Consider a weighted marked surface $\sow$ whose set of marked points is non-empty and that is not a closed surface with a single marked point. Assume further that we are given an initial mixed-angulation with dual S-graph $\Sgh=\AS^*$ and extended ribbon graph $\eS$
and an $\eS$-parametrized perverse schober $\hF$ together with a positive arc system kit. 
The injection $\iota_\A$ in Corollary~\ref{cor:iso} extends to a map between complex manifolds
\be\label{eq:app}
    \FQuad{S}{\sow} \to \Stab(\CS)\,,
\ee
which is an isomorphism onto its image. We denote its image by $\Stab^\bullet(\CS)$, which consists of generic-finite components corresponding to $\EGb(\CS)$.
\end{theorem}

\section{First examples of perverse schobers with arc system kits}\label{sec:examples}
In this section, we construct two classes of perverse schobers admitting positive arc system kits. In each case, we can use Theorem~\ref{thm:app} to describe the corresponding spaces of stability conditions. A more elaborate class of examples, generalizing those of~\cite{BS}, is explored in the paper \cite{CHQ24}. We begin in \Cref{subsec:sphericalfibrationstori} with perverse schobers arising from spherical fibrations between tori, providing a simple generalization of the constructions of \cite{Chr22,Chr21b} to arbitrary weighted marked surfaces with weights $\geq 0$. In \Cref{subsec:topFukviaschobers}, we describe topological Fukaya categories as the global sections of perverse schobers parametrized by S-graphs and show that these admit positive arc system kits.

\subsection{Spherical fibrations between products of spheres}\label{subsec:sphericalfibrationstori}
\def\psP{\hF_{\Sgh,\NUM}}
By a space, we mean a Kan complex; these assemble into the $\infty$-category of spaces. A spherical fibration $f\colon A\rightarrow B$ is a Kan fibration between Kan complexes, such that the fiber $f^{-1}(b)$ of any point $b\in B$ is homotopy equivalent to the singular simplicial set of the topological $m$-sphere with $m\geq 0$, denoted $S^m$. A simple class of examples of spherical fibrations arises as follows. We fix a finite set $\NUM\subset \mathbb{N}_{\geq 2}$ of integers greater or equal to two and denote by
\[
    \prodS^\NUM\coloneqq \prod_{i\in\NUM} S^{i-1}
\]
the product of spheres. We allow $\NUM=\emptyset$, in which case we set $\prodS^\NUM=\ast$. For each $j\in \NUM$, we have the spherical fibration
\begin{equation}\label{eq:sphfib}
    f_j=f^{\NUM}_j\colon \prodS^\NUM \rightarrow \prodS^{\NUM\setminus \{j\}}\,,
\end{equation}
collapsing one $(j-1)$-sphere to a point.

Given a space $X$, we denote $\on{Loc}(X)\coloneqq \on{Fun}(X,\mathcal{D}(k))$. The $k$-linear $\infty$-category $\on{Loc}(X)$ is called the $\infty$-category of $\mathcal{D}(k)$-valued local systems on $X$. Given a morphism of spaces $f\colon X\rightarrow Y$, we have an associated pullback functor $f^*\colon \on{Loc}(Y)\rightarrow \on{Loc}(X)$, which admits left and right adjoints $f_!,f_*\colon \on{Loc}(X)\rightarrow \on{Loc}(Y)$, given by left and right Kan extension, see \cite[4.3.3.7]{HTT}. If $f$ is a spherical fibration, the corresponding adjunction $f^*\dashv f_*$ is spherical, see \cite[Prop.~3.14]{Chr20}.

The simplest case of a spherical fibration as in \eqref{eq:sphfib} is the map $S^{n-1}\rightarrow \ast$. Perverse schobers built from the corresponding spherical adjunction were considered in \cite{Chr22,Chr21b}. Their global sections were shown to describe the derived $\infty$-categories of relative Ginzburg algebras of $n$-angulated surfaces. In this section, we consider the more general class of perverse schobers which are locally described by the spherical adjunctions arising from the spherical fibrations from \eqref{eq:sphfib}.

Recall that $\spider_j$ denotes the $j$-spider (Definition~\ref{def:spider}).
\begin{construction}\label{constr:sphfibschober}
Let $j\in \NUM$. For all $1\leq k\leq j$, the following two types of data are equivalent:
\begin{enumerate}
\item[i)] an identification of the halfedges of $\spider_{k}$ with a subset of the halfedges of $\spider_j$, respecting the cyclic order on halfedges.
\item[ii)] integers $d(a,b)\geq 1$ for all consecutive halfedges of $\spider_{k}$ at $v$, such that the sum of all these integers is equal to $j$.
\end{enumerate}
One passes from the data as in i) to data as in ii) by setting $d(a,b)=l$ for two consecutive halfedges $a,b$ of $\spider_{k}$ if $b$ follows in $\spider_j$ after the halfedge $a$ after $l$ steps.

Consider the $\spider_{j}$-parametrized perverse schober $\hF_{j}(f_j^*)$ arising via \Cref{prop:ngonschober2} from the spherical adjunction
\[
    f_j^*\colon \on{Loc}(\prodS^{\NUM\setminus\{j\}})\longleftrightarrow \on{Loc}(\prodS^\NUM)\cocolon (f_j)_*
\]
induced by the spherical fibration $f_j$ from \eqref{eq:sphfib}. Let $1\leq k<j$ and make a choice of data as in i) above. We further choose a total order on the halfedges of $\spider_k$, compatible with their cyclic order. By \Cref{prop:ngonschober}, we obtain from $\hF_{j}(f_j^*)$ a $\spider_{k}$-parametrized perverse schober, denoted $\hF_{k}(f_j^*)$, by replacing $\hF_{j}(f_j^*)(v)$ with the iterated fibers of the functors $\hF_{j}(f_j^*)(v\xrightarrow{c}e_c)$, with $c\in e_c$ a halfedge of $\spider_j$ but not a halfedge of $\spider_{k}$. We remark that the notation $\hF_{k}(f_j^*)$ leaves the choice of total order and data as in i) or ii) implicit.
\end{construction}

Let $\sow$ be a weighted marked surface, without singular points of weight $-1$. Let further $\Sgh$ be a choice of S-graph of $\sow$. Let $\NUM\subset \mathbb{N}_{\geq 2}$ be a set of numbers containing the degrees (i.e.\ weights $+2$) of the vertices of $\Sgh$.
We choose for each internal vertex of the extended ribbon graph $\eS$ a total order of its incident halfedges. Given a vertex $v$ of $\eS$ of degree $j<\infty$ and valency $1\leq k\leq j$, the S-graph data of $\Sgh$ gives rise to data as in ii) in \Cref{constr:sphfibschober}. We associate with $v$ the $\spider_{k}$-parametrized perverse schober $\hF_{k}(f_j^*)$ from \Cref{constr:sphfibschober}. With $v\in \eS_0$ of degree $\infty$ and valency $k$, we instead associate the $\spider_{k}$-parametrized perverse schober $\hF_{k}(0)$ of \Cref{prop:ngonschober2} associated with the spherical functor $0\colon 0\rightarrow \on{Loc}(\prodS^{\NUMS})$.

We define $\psP\colon\on{Exit}(\eS)\rightarrow \on{LinCat}_k$ as the gluing of the $\hF_{k}(f_j^*)$'s and $\hF_m(0)$'s, meaning that $\psP$ is the unique diagram which restricts at the subcategories $\on{Exit}(\eS_{k})\subset \on{Exit}(\eS)$ arising from any given vertex and its incident halfedges to the corresponding such diagram.

\begin{proposition}\label{prop:arcsyskitsphfib}
    The perverse schober $\psP$ admits an arc system kit with
    \begin{itemize}
        \item $L_e\in\psP(e)=\on{Loc}(\prodS^{\NUMS})$ the constant local system with value $k$ for all edges $e$ of $\Sgh$,
        \item $L_{v,a_i}\in\psP(v)$ given by the object \eqref{eq:locLv}, with $L_v \in \on{Loc}(\prodS^{\NUMS\setminus\{j\}})$ the constant local system with value $k$, for all vertices $v$ of finite weight $j$ of $\Sgh$ and halfedges $a_i$ at $v$.
        \item $L_{v,a_i}\in\psP(v)$ given by the object \eqref{eq:locLv2}, with $L=L_e\in \on{Loc}(\prodS^{\NUMS})$ the constant local system with value $k$, for all vertices $v$ of infinite weight of $\Sgh$ and non-virtual halfedges $a_i$ at $v$.
    \end{itemize}
\end{proposition}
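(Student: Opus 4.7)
The plan is to verify conditions i)--v) of \Cref{def:ask} by reducing, at each vertex $v$ of $\eS$, to the local data of \Cref{constr:arcsyskit}. All the specified objects $L_{v,a_i}$ are modelled on the diagrams \eqref{eq:locLv} and \eqref{eq:locLv2}, and all the specified $L_e$ are the constant local system $k \in \on{Loc}(\prodS^{\NUMS})$. Compatibility of $L_e$ at the two endpoints of a shared edge will be automatic because both sides specify the same constant local system, independently of the local degrees at the two endpoints. Hence it suffices to verify the local conditions iii), iv), v) at each vertex.

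For a boundary vertex $v$ of valency $q$, the local perverse schober is $\hF_{q+1}(0)$ on $\spider_{q+1}$ with the virtual halfedge in the last position, and the prescribed $L_{v,a_i}$ are precisely those of \eqref{eq:locLv2} appearing in Case~II of \Cref{constr:arcsyskit}. That case imposes no conditions on the generic-stalk object $L$, so the constant local system is admissible, and the required equivalences in iii) and iv) follow from the direct inspection already carried out in Case~II.

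For an internal vertex $v$ of finite degree $j$ and valency $k \leq j$, the local perverse schober is $\hF_k(f_j^*)$, built from $\hF_j(f_j^*)$ via \Cref{prop:ngonschober} as in \Cref{constr:sphfibschober}. The prescribed $L_{v,a_i}$ are those of \eqref{eq:locLv} in Case~I of \Cref{constr:arcsyskit}, applied to $L_v = k \in \on{Loc}(\prodS^{\NUMS\setminus\{j\}})$. The only nontrivial hypothesis of Case~I that I need to check is the twist identity $T_{\mathcal{V}}(L_v) \simeq L_v[1-j]$ for the spherical adjunction $f_j^* \dashv (f_j)_*$. This twist computation for a spherical fibration is the main step: since $f_j$ has fiber homotopic to $S^{j-1}$, the projection formula gives $(f_j)_*f_j^*(L_v) \simeq L_v \otimes_k H^*(S^{j-1};k) \simeq L_v \oplus L_v[1-j]$, with the unit being the inclusion of the first summand; taking the cofiber yields $T_{\mathcal{V}}(L_v) \simeq L_v[1-j]$, as required. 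All other equivalences in iii) and v) at the vertex $v$ then follow directly from Case~I of \Cref{constr:arcsyskit}, completing the verification.
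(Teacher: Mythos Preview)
Your proof is correct and follows essentially the same approach as the paper: reduce to \Cref{constr:arcsyskit} and verify the single nontrivial hypothesis, namely the twist identity $T_{\mathcal{V}}(L_v)\simeq L_v[1-j]$ for the spherical adjunction $f_j^*\dashv (f_j)_*$. The paper cites \cite[Prop.~3.19]{Chr22} for this, whereas you compute it directly via fiberwise cohomology; the only point you leave implicit (but the paper mentions) is that $f_j^*$ sends the constant local system to the constant local system, which is needed so that $L_e=F(L_v)$ agrees with the specified constant $L_e$.
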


\begin{proof}
This follows from the discussion in \Cref{constr:arcsyskit}, the observation that the pullback functor $f_j^*$ maps constant local systems to constant local systems, and the fact that the twist functor of $f_j^*\dashv (f_j)_*$ acts on the constant local system with value $k$ as the delooping $[1-j]$, see \cite[Prop.~3.19]{Chr20}.
\end{proof}

\begin{lemma}\label{lem:sphfibarcsystemispositive}
The arc system kit from \Cref{prop:arcsyskitsphfib} is positive.
\end{lemma}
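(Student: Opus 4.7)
The plan is to verify the two cohomological conditions in Definition \ref{rem:locend} by direct computation, using Lemma \ref{lem:locend} to cut down the number of cases. The weight $-1$ extension condition of that definition is vacuous here, since $\sow$ has no singular points of weight $-1$ by the standing assumption of \Cref{subsec:sphericalfibrationstori}. By parts (1) and (2) of Lemma \ref{lem:locend}, $\Ende$ is independent of the edge and $\Endv$ is independent of the non-virtual halfedge chosen at $v$; by part (3), $\Endv \simeq \Ende$ whenever $v$ is a boundary vertex. It therefore suffices to show $\on{H}^{<0} \simeq 0$ and $\on{H}^0 \simeq k$ for $\Ende$ and for $\Endv$ at an arbitrary internal vertex of finite weight.

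For $\Ende$, the object $L_e \in \psP(e) = \on{Loc}(\prodS^{\NUMS})$ is the constant local system with value $k$, so its derived endomorphism computes singular cochains:
\[
\Ende \;\simeq\; C^*(\prodS^{\NUMS};k) \;\simeq\; \bigotimes_{i \in \NUMS} C^*(S^{i-1};k).
\]
Since each $i \in \NUMS$ satisfies $i \geq 2$, every tensor factor is concentrated in non-negative cohomological degrees with $\on{H}^0 \simeq k$, and the same therefore holds for the tensor product. This settles positivity of $\Ende$.

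For an internal vertex $v$ of finite degree $j$ and valency $q$, I would take the halfedge $a_1$, so that by \eqref{eq:locLv} the object $L_{v,a_1} \in \psP(v)$ is represented by the diagram
\[
L_v \longrightarrow f_j^*(L_v) \xrightarrow{\on{id}} f_j^*(L_v) \xrightarrow{\on{id}} \cdots \xrightarrow{\on{id}} f_j^*(L_v),
\]
with initial morphism coCartesian and $L_v$ the constant local system at $k$ on $\prodS^{\NUMS\setminus\{j\}}$. Unpacking the definition of $\mathcal{V}^q_{f_j^*}$, an endomorphism of this diagram is determined entirely by an endomorphism $\alpha\colon L_v \to L_v$: the coCartesian condition forces the component on the first copy of $f_j^*(L_v)$ to equal $f_j^*(\alpha)$, and compatibility with the subsequent identities propagates this unchanged along the rest of the chain. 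Hence
\[
\Endv \;\simeq\; \on{End}_{\on{Loc}(\prodS^{\NUMS\setminus\{j\}})}(L_v) \;\simeq\; C^*(\prodS^{\NUMS\setminus\{j\}};k),
\]
which satisfies the desired positivity by the same product-of-spheres argument.

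The main obstacle is the partial-valency case $q < j$: there $\psP(v)$ is defined as a kernel inside $\mathcal{V}^j_{f_j^*}$ rather than being of the form $\mathcal{V}^q_{F'}$ for a manifestly spherical $F'$, and a small unpacking of \Cref{constr:sphfibschober} is needed to confirm that \eqref{eq:locLv} produces a well-defined object of this kernel whose endomorphism algebra still reduces to $\on{End}(L_v)$ by the reasoning above. Full-faithfulness of the embedding $\psP(v) \hookrightarrow \mathcal{V}^j_{f_j^*}$ then ensures that the ambient computation gives the correct answer inside $\psP(v)$, completing the verification.
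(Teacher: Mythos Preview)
Your proposal is correct and follows essentially the same approach as the paper: identify $\Ende$ and $\Endv$ with singular cochain algebras of the relevant products of spheres, then read off positivity from the Künneth decomposition and the fact that each factor $C^\bullet(S^{i-1})\simeq k\oplus k[1-i]$ is concentrated in non-negative degrees with $\on{H}^0\simeq k$. Your justification of $\Endv\simeq\on{End}(L_v)$ via the coCartesian-diagram description is in fact more explicit than the paper's, which simply records the equivalence; and your remark about the partial-valency case $q<j$ is well taken but, as you note, resolved by full faithfulness of the kernel inclusion.
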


Before we prove \Cref{lem:sphfibarcsystemispositive}, we need to recall some aspects of the relation between singular cohomology and derived endomorphism algebras of local systems. Given a space $X$, we denote by $C^\bullet(X)\in \mathcal{D}(k)$ the singular cochain complex of $X$. It can be defined as the limit of the constant local system $\underline{k}_X\colon X\rightarrow \mathcal{D}(k)$ with value $k$. Let $\pi\colon X\to \ast$. We have $\on{lim}(\underline{k}_X)\simeq \pi_*(\underline{k}_X)$, with $\pi_*\colon \on{Loc}(X)\rightarrow \on{Loc}(\ast)$ the right adjoint of the pullback functor $\pi^*$. The derived $\infty$-category $\mathcal{D}(k)$ of the field $k$ is symmetric monoidal, and $\on{Loc}(X)$ inherits the pointwise symmetric monoidal structure, see for instance \cite[Lem.~3.15]{Chr20}. We can describe the functor $\pi^*$ in terms of these monoidal structures as the functor $\mhyphen\otimes_k \underline{k}_X$. Its right adjoint $\pi_*$ is thus described by the functor $\on{RHom}_{\on{Loc}(X)}(\underline{k}_X,\mhyphen)$. It follows that $C^\bullet(X)$ is quasi-isomorphic to the derived endomorphism algebra $\on{End}_{\on{Loc}(X)}(\underline{k}_X)$.

Given two spaces $X,Y$, we find that $C^\bullet(X\times Y)\simeq C^\bullet(X)\otimes C^\bullet(Y)\in \mathcal{D}(k)$. To see this, we note that the limit over $X\times Y$ is obtained by first taking the right Kan extension along the projection $X\times Y\rightarrow Y$, and then taking the right Kan extension along $Y\rightarrow \ast$. The outcome of the former is the constant local system on $Y$ with value $C^\bullet(X)$. This local system is equivalent to $C^\bullet(X)\otimes \underline{k}_Y$. The outcome of the latter Kan extension is thus $\on{lim}(C^\bullet(X)\otimes \underline{k}_Y)\simeq C^\bullet(X)\otimes \on{lim}(\underline{k}_Y)\simeq C^\bullet(X)\otimes C^\bullet(Y)$, as desired.

\begin{proof}[Proof of \Cref{lem:sphfibarcsystemispositive}.]
By the above discussion, we have equivalences
\[ \Ende=\on{End}_{\on{Loc}(\prodS^{\NUMS})}(L_e)\simeq C^\bullet(\prodS^{\NUMS})\simeq \bigotimes_{i\in \NUM}C^\bullet(S^{i-1})\]
and
\[
    \Endv=\on{End}_{\on{Loc}(\prodS^{\NUMS\setminus\{j\}})}(L_e)\simeq C^\bullet(\prodS^{\NUMS\setminus\{j\}})\simeq \bigotimes_{i\in \NUM, i\neq j}C^\bullet(S^{i-1})\,,
\]
with $v$ a vertex of degree $j$. Using that $C^\bullet(S^{i-1})\simeq k\oplus k[1-i]$, see for instance \cite[Lem.~3.2]{Chr22}, the positivity of the arc system kit follows.
\end{proof}

\begin{remark}
If $\sow$ has no interior singular points and $\NUM=\emptyset$, then $\psP$ has no singularities and the generic stalk is $\on{Loc}(\ast)\simeq \mathcal{D}(k)$. Hence $\Gamma(\eS,\psP)$ describes a topological Fukaya category of ${\bf S}$ as defined in \cite{DK15,HKK17}.
\end{remark}

\begin{remark}\label{rem:CYstructuresforschobers}
Assume that $\Lambda$ contains an odd integer $j$. The construction of the perverse schober $\psP$ can be slightly modified as follows (without affecting the arc system kit), such that the $\infty$-category of global sections $\glsec(\Sgh,\psP)$ admits a relative left Calabi--Yau structure of dimension $1+\prod_{i\in \Lambda}(i-1)$. Let $v$ be any vertex of degree $j$ of $\Sgh$ with incident edges $e_1,\dots,e_k$. Combining \cite[Thm.~5.7]{BD19} and \cite[Prop.~5.2]{Chr23} yields a left Calabi--Yau structure on the functor
\[
\on{Loc}(\prodS^\NUM)^{\times k}= \prod_{i=1}^k \hF_{k}(f_j^*)(e_i)\longrightarrow \hF_{k}(f_j^*)(v)
\]
obtained from $\hF_{k}(f_j^*)$ by passing to left adjoints. This relative Calabi--Yau structure restricts to an absolute left Calabi--Yau structure of dimension $\prod_{i\in \Lambda}(i-1)$ on each copy of $\hF_{k}(f_j^*)(e_i)$. Since each edge is incident to two vertices, we obtain in this way for each edge $e$ two left Calabi--Yau structures on $\hF_{k}(f_j^*)(e)$, whose corresponding Hochschild homology classes are by construction either identical or differ by sign. If the Hochschild homology
classes differ by a sign, the Calabi–Yau structures are said to be compatible. If the Calabi--Yau structures are not compatible, we choose a halfedge of $e$ lying at the vertex $v$ and change $\mathcal{F}(v\to e)$ by composition with the autoequivalence of $\on{Loc}(\prodS^\NUM)$ obtained from pulling back along the homeomorphisms $\prodS^\NUM\to \prodS^\NUM$ given by the product of $\on{id}_{S^i}$, $i\in \Lambda\backslash\{j\}$, and the orientation reversing antipodal map $S^{j-1}\to S^{j-1}$ (where $j\in \Lambda$ is the chosen odd integer). Note that this autoequivalence acts as $-\on{id}$ on the corresponding Hochschild homology class. After these changes, all Calabi--Yau structures at the edges are thus compatible and we can apply \cite[Thm.~5.7]{Chr23} to deduce the existence of a relative left Calabi--Yau structure on $\glsec(\Sgh,\psP)$.

If $\Lambda$ contains no even integer, the $\infty$-category $\glsec(\Sgh,\psP)$ admits a relative Calabi--Yau structure if the local choices of Calabi--Yau structures can be made compatible, and this amounts to a condition on the S-graph. A more detailed discussion in the case of $n$-angulated surfaces can be found in \cite[Def.~6.6]{Chr23}.
\end{remark}

\subsection{Topological Fukaya categories of infinite area flat surfaces}\label{subsec:topFukviaschobers}

Fix a weighted marked surface $\sow$ with a mixed-angulation and dual S-graph $\Sgh$. Consider the oriented real blow-up ${\bf S}^{\on{blw}}$ of $\sow$ at the set $\Delta \cup (M\cap \sow^\circ)$ of singular points and interior marked points, meaning that an open neighborhood of each interior such point is removed, yielding a new boundary circle, and each boundary singular point is expanded to a closed interval. We refer to these boundary intervals as the marked boundary.

We modify $\Sgh$ to the graph denoted $\rgraph_{\Sgh}$, which comes with an embedding into ${\bf S}^{\on{blw}}$, by moving each interior vertex (lying at a singular point) by a small amount in an arbitrary direction and adding a loop at the vertex, wrapping around the boundary circle given by the oriented blow-up of the singular point, see Figure~\ref{fig:blowup}.

\begin{figure}[ht]
\begin{tikzpicture}[scale=.5,arrow/.style={->,>=stealth}]
\clip(-9,-3)rectangle(9,3);
\begin{scope}[xscale=1,shift={(-5,0)}]
\draw[red,thick](0,0)edge(3,3)edge(3,-3)edge(-3,3)edge(-3,-3) \ww;
\end{scope}\draw(0,0)node{\Huge{$\leftrightsquigarrow$}};
\begin{scope}[xscale=1,shift={(5,0)}]
\draw[red,thick](-2,0)
    .. controls +(45:7) and +(-45:7) ..(-2,0);
\draw[thick,fill=gray!20](0,0)circle(.8);
\draw[red,thick](-2,0)edge[bend left=30](3,3)edge[bend left=-30](3,-3)
    edge(-3,3)edge(-3,-3) \ww;
\end{scope}
\end{tikzpicture}
\caption{The passage from $\Sgh$ to $\rgraph_{\Sgh}$ near an interior singular point.}
\label{fig:blowup}
\end{figure}

Recall that a graph $\rgraph$ is called a spanning graph of the marked surface ${\bf S}^{\on{blw}}$ if there is an embedding $i\colon |\rgraph|\subset {\bf S}^{\on{blw}}$ of its geometric realization, which is a homotopy equivalence such that each marked boundary interval contains the image of a unique point in $|\rgraph|$.

\begin{lemma}\label{lem:Sgraphspanning}
The graph $\rgraph_{\Sgh}$ is a spanning graph of ${\bf S}^{\on{blw}}$.
\end{lemma}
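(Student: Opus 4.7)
The plan is to verify the two defining conditions of a spanning graph: (i) exhibit an embedding $i \colon |\rgraph_{\Sgh}| \hookrightarrow {\bf S}^{\on{blw}}$ that is a homotopy equivalence, and (ii) check that each marked boundary component of ${\bf S}^{\on{blw}}$ meets $i(|\rgraph_{\Sgh}|)$ in exactly one point.

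First I would make the embedding explicit. For each interior vertex $v$ of $\Sgh$ (a singular point whose blow-up contributes a boundary circle $\partial_v \subset \partial {\bf S}^{\on{blw}}$), push $v$ off $\partial_v$ to a nearby point $v'$ in the interior of ${\bf S}^{\on{blw}}$, embed the added loop at $v'$ as a small curve running parallel to $\partial_v$ once around and back to $v'$, and embed the edges of $\Sgh$ incident to $v$ as small extensions of their original images, now terminating at $v'$. Boundary vertices of $\Sgh$ (weight $\infty$ singularities) lie on components of $\partial {\bf S}^{\on{blw}}$ coming from blown-up boundary points and may be embedded as their original locations. The only non-obvious point is that the added loops and slightly displaced edges can be arranged to be disjoint and transverse to $\partial {\bf S}^{\on{blw}}$, which is immediate by choosing the push-offs small enough.

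Next I would prove the homotopy equivalence in (i). Recall that $\Sgh$ is dual to the mixed-angulation $\AS$ and hence is a deformation retract of $\bar{\bf S}\setminus M$, since $\AS$ provides a CW-decomposition of $\bar{\bf S}$ with $0$-cells at $M$, $1$-cells the edges of $\AS$, and $2$-cells the polygons of $\AS$ (each containing one singular point, i.e.\ one vertex of $\Sgh$). The real blow-up ${\bf S}^{\on{blw}}$ is homotopy equivalent to $\bar{\bf S}\setminus I$, and since $I = \Delta\cup(M\cap\sow^\circ)\cup C$ adds precisely the interior singular points and extra punctures compared to $\bar{\bf S}\setminus M$, we need to account for loops around the interior singularities; loops around points in $(M\cap\sow^\circ)\cup C$ are already captured by the existing cycles in $\Sgh$ (respectively absorbed into the boundary of ${\bf S}^{\on{blw}}$ that $\Sgh$ already touches). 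The modification $\Sgh \rightsquigarrow \rgraph_{\Sgh}$ introduces precisely one such loop per interior singular point. A direct polygon-by-polygon deformation retraction of ${\bf S}^{\on{blw}}$ onto $i(|\rgraph_{\Sgh}|)$ then exists: each polygon of $\AS$, after blow-up at its central singularity and at its vertices, deformation retracts onto the portion of $\rgraph_{\Sgh}$ contained in it (the local loop at $v'$ together with the incident half-edges), and these retractions glue along the edges of $\AS$.

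Finally, for condition (ii) I would check directly that each marked boundary component of ${\bf S}^{\on{blw}}$ meets $i(|\rgraph_{\Sgh}|)$ in exactly one point. The marked boundary components coming from the blow-up of an interior singular point $v$ meet only the pushed-off edges of $\Sgh$ and the added loop at $v'$, which by construction can be arranged to intersect $\partial_v$ in a single point; the marked boundary components from blown-up boundary singularities (weight $\infty$ vertices of $\Sgh$) meet $\rgraph_{\Sgh}$ precisely at that vertex. The main potential obstacle is bookkeeping at the remaining marked points of $I$ (interior marked points of $\sow$ and collapsed boundary points $C$) whose blow-ups yield boundary components not natively touched by $\Sgh$; in the setting of the paper these either do not occur or are handled by declaring them unmarked, so the argument reduces to the two cases above.
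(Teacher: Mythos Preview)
Your proof is correct and follows essentially the same approach as the paper's: both arguments rest on a polygon-by-polygon deformation retraction coming from the mixed-angulation. The paper is slightly more economical in that it first reduces the claim to the simpler assertion that $|\Sgh| \hookrightarrow \sow \setminus (M \cap \sow^\circ)$ is a homotopy equivalence restricting to a bijection between boundary vertices of $\Sgh$ and $\Delta \cap \partial\sow$; the passage from this to the statement about $\rgraph_{\Sgh}$ and ${\bf S}^{\on{blw}}$ is then formal (adding a loop at each interior vertex of $\Sgh$ exactly accounts for puncturing at the interior singular points). This reduction absorbs most of the case analysis you carry out in (ii) and sidesteps the bookkeeping you hedge on at the end.
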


\begin{proof}
The assertion follows from the fact that the canonical inclusion $i\colon |\Sgh|\subset \sow \backslash (M\cap \sow^\circ)$ is a homotopy equivalence, which restricts to a bijection between the boundary vertices of $\Sgh$ and the singular points on the boundary $\Delta\cap \partial \sow$. This in turn follows from the observation that the inclusion $i$ restricts on each polygon of the mixed-angulation to a homotopy equivalence.
\end{proof}

As shown in \cite{DK15,HKK17}, there is an associated $\ZZ$-graded topological Fukaya category, which we will denote by $\on{Fuk}({\bf S}^{\on{blw}})$, which arises as the global sections of a constructible cosheaf of $A_\infty$-categories or dg-categories defined on the extended graph $\rg$ of any choice of spanning graph $\rgraph$ of ${\bf S}^{\on{blw}}$. We can choose a pointwise Morita-equivalent constructible cosheaf on $\rg_{\Sgh}$ which is valued in dg-categories, and apply the functor $\mathcal{D}(\mhyphen)\colon \on{dgCat}[W^{-1}]\to \on{LinCat}_k$, to obtain a cosheaf valued in $\on{LinCat}_k$. Note that this process preserves global sections, i.e.\ (homotopy) colimits. Passing to the right adjoint diagram, we obtain a constructible sheaf $\hF$ on $\rg_{\Sgh}$, which is again valued in $\on{LinCat}_k$, since if a $k$-linear functor preserves compact objects, then its right adjoint is again $k$-linear. One can check that this defines a $\rg_{\Sgh}$-parametrized perverse schober with generic stalk $\mathcal{D}(k)$. This perverse schober assigns to each $n$-valent vertex of $\rgraph$ the $\on{Ind}$-complete topological Fukaya $\infty$-category of the $n$-gon (which carries an induced grading structure). The topological Fukaya category of any $n$-gon is equivalent to the derived $\infty$-category of the $A_{n-1}$-quiver. It follows from the equivalence $\mathcal{D}(kA_{n-1})\simeq \mathcal{V}^n_{0}$, with $0\colon 0\to \mathcal{D}(k)$ the zero functor, see also above \Cref{prop:ngonschober2} for the notation, that the perverse schober has no singularities. In the following, we describe a way to obtain the $\on{Ind}$-complete topological Fukaya $\infty$-category $\on{Ind}\on{Fuk}({\bf S}^{\on{blw}})\coloneqq \mathcal{D}(\on{Fuk}({\bf S}^{\on{blw}}))\simeq \glsec(\rg_{\Sgh},\hF)$ as the global sections of a novel and related $\eS$-parametrized perverse schober $\mathcal{G}$. In contrast to $\hF$, the perverse schober $\mathcal{G}$ has singularities, one at each singular point of finite weight of $\sow$. We then show that $\mathcal{G}$ admits a positive arc system kit, and use this to describe the space of stability conditions of the topological Fukaya category.

Let $\rg_{\on{tri}}$ be the ribbon graph equipped with a contraction $c\colon \rg_{\on{tri}}\to \rg_{\Sgh}$, satisfying that $c$ is the identity at the boundary vertices, and restricts at each internal vertex to the contraction depicted in \Cref{fig:Gtri}, where the loop encloses an interior singular point in $\sow$.
\begin{figure}[!ht]
\[
\begin{tikzpicture}[scale=.7,arrow/.style={->,>=stealth}]
\begin{scope}[xscale=1,shift={(-4,0)}]
  \draw[red,thick](-1,0)edge (-1,-2) edge (-1,2)
                  (-3,0)to(1,0)\ww(-1,0)\ww
                  (1,0) coordinate (x)
                  (-1,0) node[below left]{$\cdots$} node[above left]{$\cdots$} ;
\draw[red,thick] (x) .. controls +(-45:3) and +(45:3) .. (x)\ww;
\draw (2,0) node{$\bullet$};
\end{scope}
\draw[Emerald,ultra thick, opacity=.9,arrow](-.5,0)to(.5,0);
\begin{scope}[xscale=1,shift={(4,0)}]
  \draw[red,thick](1,0)edge (1,-2) edge (1,2)
                  (-3,0)to(1,0)\ww
                  (1,0) coordinate (x)
                  (-1,0) node[below left]{$\cdots$} node[above left]{$\cdots$} ;
\draw[red,thick] (x) .. controls +(-45:3) and +(45:3) .. (x)\ww;
\draw (2,0) node{$\bullet$};
\end{scope}
\end{tikzpicture}
\]
\caption{The contraction $c\colon \rg_{\on{tri}}\to \rg_{\Sgh}$ near an internal vertex.}\label{fig:Gtri}
\end{figure}
Thus, all loops of $\rg_{\on{tri}}$ enclosing interior singular points in $\sow$ are incident to a trivalent vertex. We obtain a $\rg_{\on{tri}}$-parametrized perverse schober $c^*(\hF)$. Let $w$ be a trivalent vertex of $\rg_{\on{tri}}$ with an incident loop. A choice of two generating Lagrangians in the topological Fukaya-category of the $3$-gon centered around the vertex $w$ gives an equivalence $\hF(w)\simeq \on{Fun}(\Delta^{1},\mathcal{D}(k))$. Using this equivalence, we can find an explicit algebraic description of $c^*(\hF)$ near the loop in terms of the grading of the surface. The grading assigns the Maslov index $m\geq 1$ to the closed curve describing the loop. Recall also that the weight of the corresponding interior marked point of $\sow$ is $m-2$. The perverse schober $c^*(\hF)$ is locally equivalent to the following diagram:
\[
\begin{tikzcd}
\mathcal{D}(k) & {\on{Fun}(\Delta^1,\mathcal{D}(k))} \arrow[l, "\on{ev}_{0}"] \arrow[r, "{\on{cof}[m]}", bend left] \arrow[r, "{\on{ev}_1}"', bend right] & \mathcal{D}(k)
\end{tikzcd}
\]
Note that the perverse schober $\hF$ has clockwise monodromy $[m]$ around this loop, in the sense defined in \cite{Chr23}. The limit of the above diagram (which describes the sections with support in a neighborhood of the loop) is equivalent to the derived $\infty$-category $\mathcal{D}(k[t_{m}])$ of the graded polynomial algebra $k[t_{m}]$ with $|t_{m}|=m$ (using cohomological grading). Restriction defines a spherical functor $\xi^m_!\colon \mathcal{D}(k[t_{m}])\rightarrow \mathcal{D}(k)$, which can be described in terms of the morphism of dg-algebras $\xi^m\colon k[t_{m}]\xrightarrow{t_{m}\mapsto 0} k$. The trivial module $\underline{k}\in \mathcal{D}(k[t_{m}])$ is the image of the right adjoint $(\xi^m)^*$ of $\xi^m_!$; this is the $(1-m)$-spherical object giving rise to the spherical adjunction $\xi^m_!\dashv (\xi^m)^*$, since $(\xi^m)^*\simeq \mhyphen \otimes_k \underline{k}$. The cofiber sequence
\begin{equation}\label{eq:twist_cof_seq_ktm}
k[t_{m}][-m]\xlongrightarrow{\cdot t_{m}} k[t_{m}] \longrightarrow \underline{k}
\end{equation}
shows that
\begin{equation}\label{eq:twistktm} T_m(k[t_{m}])\simeq k[t_{m}][1-m]\end{equation} with $T_m$ the twist functor of the adjunction $\xi^m_!\dashv (\xi^m)^*$.

We cut off the loops at the trivalent vertices incident to a loop enclosing a finite-angle conical point as follows:
\[
\begin{tikzpicture}[scale=.7,arrow/.style={->,>=stealth}]
\clip(-9,-1)rectangle(9,1);
\begin{scope}[xscale=1,shift={(-4,0)}]
  \draw[red,thick](-1,0)to(1,0)
                  (1,0) .. controls +(-45:3) and +(45:3) .. (1,0)\ww;
\end{scope}
\draw[Emerald,ultra thick, opacity=.9,arrow](-.5,0)to(.5,0);
\begin{scope}[xscale=1,shift={(2,0)}]
  \draw[red,thick](-1,0)to(1,0)
                  (1,0) (1,0)\ww;
\end{scope}
\end{tikzpicture}
\]
We denote the resulting ribbon graph by $\rg_{\on{tri},\on{cut}}$. We let $\hF_{\on{cut}}'$ be the $\rg_{\on{tri},\on{cut}}$-parametrized perverse schober which is identical to $c^*(\hF)$ away from the new $1$-valent vertices and given by the spherical functor $\xi^m_!$ at the new $1$-valent vertices. The contraction $c$ induces a contraction $c_{\on{cut}}$ from $\rg_{\on{tri},\on{cut}}$ to the ribbon graph $\rg_{\Sgh,\on{cut}}$, obtained by removing all the loops from $\rg_{\eS}$. Note that $\rg_{\Sgh,\on{cut}}=\eS$, i.e.\ we recover the extended graph of the chosen S-graph $\eS$. We obtain the $\eS$-parametrized perverse schober $\hF_{\on{cut}}=(c_{\on{cut}})_*(\hF_{\on{cut}}')$.

\begin{proposition}
There exists an equivalence of stable $\infty$-categories
\[ \on{Ind}\on{Fuk}({\bf S}^{\on{blw}})\simeq \glsec(\eS,\hF_{\on{cut}})\,.\]
\end{proposition}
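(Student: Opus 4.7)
The plan is to establish the equivalence as a zig-zag of four equivalences, mirroring the four-step construction of $\hF_{\on{cut}}$ from the topological Fukaya cosheaf:
\[
\on{Ind}\on{Fuk}({\bf S}^{\on{blw}}) \simeq \glsec(\rg_{\Sgh},\hF) \simeq \glsec(\rg_{\on{tri}},c^*(\hF)) \simeq \glsec(\rg_{\on{tri},\on{cut}},\hF_{\on{cut}}') \simeq \glsec(\eS,\hF_{\on{cut}}).
\]
The first equivalence is essentially the definition of $\hF$ together with the fact that $\rg_\Sgh$ is indeed a spanning graph of ${\bf S}^{\on{blw}}$ (Lemma~\ref{lem:Sgraphspanning}), so that the global sections of the topological Fukaya constructible cosheaf of \cite{DK15,HKK17} compute $\on{Ind}\on{Fuk}({\bf S}^{\on{blw}})$; passing from the cosheaf to the dual sheaf $\hF$ preserves limits thanks to the equivalence $\on{ladj}(-)\colon \mathcal{P}r^L_{\on{St}} \simeq (\mathcal{P}r^R_{\on{St}})^{\on{op}}$.

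The second and fourth equivalences are direct applications of Lemma~\ref{lem:contr}. For the second, I invoke part (2) of the lemma for the contraction $c\colon \rg_{\on{tri}} \to \rg_\Sgh$, taking $S$ to be the set of trivalent vertices of $\rg_{\on{tri}}$ incident to the loops around interior singular points; since $\hF$ has no singularities, the hypothesis on $S$ is vacuously satisfied. For the fourth, I apply part (1) of Lemma~\ref{lem:contr} to $c_{\on{cut}}\colon \rg_{\on{tri},\on{cut}} \to \eS$, and this is essentially the definition $\hF_{\on{cut}} = (c_{\on{cut}})_*(\hF_{\on{cut}}')$; I need only check that $c_{\on{cut}}$ contracts no edge joining two singularities of $\hF_{\on{cut}}'$, which holds because the singularities of $\hF_{\on{cut}}'$ lie at the newly created $1$-valent vertices (one for each interior singular point of $\sow$), and the edges contracted by $c_{\on{cut}}$ are those running between such a $1$-valent vertex and the (non-singular) trivalent vertex arising from the previous step.

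The substantive step is the third equivalence, which is a purely local statement at each loop of $\rg_{\on{tri}}$ enclosing an interior singular point $x$ of degree $m$. Near such a loop, the limit over the two-edge subdiagram of $c^*(\hF)$ given by
\[
\begin{tikzcd}
\mathcal{D}(k) & {\on{Fun}(\Delta^1,\mathcal{D}(k))} \arrow[l,"\on{ev}_0"'] \arrow[r,"\on{ev}_1"] & \mathcal{D}(k)
\end{tikzcd}
\]
(with the two maps to $\mathcal{D}(k)$ identified via the monodromy shift $[m]$) is equivalent to $\mathcal{D}(k[t_m])$, and the residual restriction to the edge leaving the trivalent vertex factors through this limit as the spherical functor $\xi^m_!$ used to define $\hF_{\on{cut}}'$ at the new $1$-valent vertex. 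Globally, the fact that a limit can be computed by first taking limits over each loop region, and then over the remaining graph, then implies the desired equivalence of global sections.

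The main obstacle will be to carefully match the combinatorial data of the grading/monodromy with the algebraic spherical functor $\xi^m_!\colon \mathcal{D}(k[t_m])\to \mathcal{D}(k)$, i.e.\ verifying that the twist functor \eqref{eq:twistktm} accounts correctly for the clockwise monodromy $[m]$ of the horizontal foliation around $x$. Once this local identification is in place (and once one checks that the singularities of $\hF_{\on{cut}}'$ exhaust the new $1$-valent vertices, so that $c_{\on{cut}}^*(\hF_{\on{cut}}) \simeq \hF_{\on{cut}}'$), the chain of equivalences assembles to give the proposition.
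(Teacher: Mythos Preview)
Your proposal is correct and follows essentially the same four-step zig-zag as the paper's proof, which likewise invokes Lemma~\ref{lem:contr} for the two contraction steps and computes the limit over $\rg_{\on{tri}}$ in two stages (local limits over the loop regions, then the remaining global limit, citing \cite[4.2.3.10]{HTT}) to obtain the third equivalence. One small slip: in your application of Lemma~\ref{lem:contr}(2), the subset $S$ must biject onto the singularities of $\hF$, so since $\hF$ has none you should take $S=\emptyset$ rather than the set of trivalent vertices; this does not affect the argument.
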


\begin{proof}
We have $\on{Ind}\on{Fuk}({\bf S}^{\on{blw}})\simeq \glsec(\rg_{\Sgh},\hF)$ and \Cref{lem:contr} implies that
\[\glsec(\rg_{\Sgh},\hF)\simeq \glsec(\rg_{\on{tri}},c^*(\hF)) \quad\text{and}\quad \glsec(\eS,\hF_{\on{cut}})\simeq \glsec(\rg_{\on{tri},\on{cut}},\hF_{\on{cut}}').\] The limit $\glsec(\rg_{\on{tri}},c^*(\hF))$ of $c^*(\hF)$ can be computed in two steps, by first computing the local limits at the loops enclosing finite-angle conical points and then computing the remaining global limit. This is a general phenomenon and  follows for instance from \cite[4.2.3.10]{HTT}. The outcome of the first step yields up to equivalence the diagram $\hF_{\on{cut}}'$, showing that their limits agree.
\end{proof}

The edges of the S-graph $\Sgh$ define a set of objects $\A_{\Sgh}=\{\A_e\}_{e\in \Sgh_1}$ in the topological Fukaya category. These can be seen as graded Lagrangians (graded arcs) in ${\bf S}^{\on{blw}}$, whose ends either lie on a marked boundary point or which wrap infinitely many times around a boundary component containing no marked points.

\begin{proposition}
There is a positive arc system kit for $\hF_{\on{cut}}$ giving rise to the objects $\A_{\Sgh}$.
\end{proposition}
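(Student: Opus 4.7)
The plan is to apply \Cref{constr:arcsyskit} locally at each vertex, with a specific choice of distinguished objects, and then check positivity by direct computation. For a finite-weight interior vertex $v$ of degree $m$, the perverse schober $\hF_{\on{cut}}$ is locally described by the spherical functor $\xi^m_!\colon \mathcal{D}(k[t_m])\to \mathcal{D}(k)$, so I would choose $L_v\coloneqq k[t_m]\in \mathcal{D}(k[t_m])$. Condition i) in \Cref{constr:arcsyskit} then amounts to the equivalence $T_{\mathcal{V}}(k[t_m])\simeq k[t_m][1-m]$ recorded in \eqref{eq:twistktm}. Consequently $L_e\coloneqq \xi^m_!(L_v)\simeq \underline{k}\in \mathcal{D}(k)$ on the generic stalk. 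At a boundary vertex, by Case II of \Cref{constr:arcsyskit} any object of $\mathcal{N}=\mathcal{D}(k)$ suffices, and I take $L\coloneqq \underline{k}$ to match $L_e$ on the other side of each non-virtual edge. Gluing these local prescriptions produces a well-defined arc system kit for $\hF_{\on{cut}}$.

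For positivity I would argue as follows. At an edge, $\on{End}_{L}\simeq \on{End}_{\mathcal{D}(k)}(\underline{k})\simeq k$, which is obviously concentrated in degree $0$. At an interior vertex of degree $m\geq 2$, $\on{End}_v\simeq \on{End}_{\mathcal{D}(k[t_m])}(k[t_m])\simeq k[t_m]$, concentrated in non-negative degrees with $\on{H}^0\simeq k$, hence positive. At a boundary vertex $v$ of valency $r$, \Cref{lem:locend}(3) immediately gives $\on{End}_v\simeq \on{End}_L\simeq k$. The remaining condition concerns weight $-1$ vertices (degree $m=1$), where $\on{End}_v\simeq k[t_1]$ and $\on{H}^1\simeq k$. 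I would identify the generator of $\on{H}^1$ with the extension arising from the fiber sequence $k[t_1][-1]\xrightarrow{\cdot t_1}k[t_1]\to \underline{k}$; under $(\xi^1)^*\dashv \xi^1_!$ the unit applied to $L_v=k[t_1]$ produces exactly this extension, which matches the required description in \Cref{rem:locend} using \eqref{eq:twistktm}.

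Finally, I must verify that the global sections associated to the edges of $\Sgh$ via this arc system kit, as defined in \Cref{sec:objfromarcs}, coincide (up to equivalence in $\glsec(\eS,\hF_{\on{cut}})\simeq \on{Ind}\on{Fuk}({\bf S}^{\on{blw}})$) with the graded Lagrangian arcs $\A_{\Sgh}$ coming from the edges of the S-graph in the topological Fukaya category. This is a local-to-global comparison: on each polygon of the mixed-angulation, the local section associated to a segment $\delta_{v,a}$ by the arc system kit represents, under the identification $\hF_{\on{cut}}(v)\simeq \on{Fuk}(\text{polygon})$, the half-edge arc from the singular point to the midpoint of the corresponding edge. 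Gluing these half-edges along midpoints via the pushout \eqref{eq:mpu} reproduces exactly the construction of the full edge Lagrangian in the topological Fukaya category as a cone/gluing along intersection points, which is how $\A_{\Sgh}$ is defined in \cite{HKK17,DK15}.

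The main obstacle is the weight $-1$ case: one needs to trace carefully how the cotwist of $\xi^m_!\dashv (\xi^m)^*$ interacts with the grading shift coming from the loop monodromy around the degree $1$ conical point, to confirm that the generator of $\on{H}^1(\on{End}_v)$ really is the prescribed self-extension. The other delicate step is matching segment-wise local sections on each polygon with the standard model of Lagrangian arcs in the Fukaya category of a disk, but this reduces to the well-known computation in $\mathcal{D}(kA_{n-1})\simeq \mathcal{V}^n_0$ recalled before \Cref{prop:ngonschober2}.
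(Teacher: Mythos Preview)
Your approach is correct and leads to the same arc system kit as the paper's, but you run the argument in the opposite direction. The paper starts from the Lagrangian arcs $\A_e$, which are \emph{already given} as global sections of $\hF_{\on{cut}}$, and simply evaluates them at vertices and edges (cf.~\Cref{rem:disassembletoask}) to obtain $L_e=\A_e(e)$ and $L_{v,a}=\A_e(v)$; the axioms of an arc system kit then follow, with positivity reducing to the single equation~\eqref{eq:twistktm}. By contrast, you build the kit bottom-up from \Cref{constr:arcsyskit} using the explicit choices $L_v=k[t_m]$ and $L=\underline{k}$, and only afterwards argue that the resulting global sections recover the Fukaya-categorical objects $\A_\Sgh$.

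Two practical differences are worth noting. First, invoking \Cref{constr:arcsyskit} at an interior vertex presupposes that $\hF_{\on{cut}}$ restricts there to the standard model $\hF_q(\xi^m_!)$; this holds by \Cref{rem:localmodelforschobers} together with the contraction description of $\hF_{\on{cut}}$, but you should say so explicitly rather than leave it implicit. The paper's evaluation-based approach sidesteps this identification entirely. Second, your final matching step---tracing the sections built from the kit back through the chain of equivalences to the Fukaya Lagrangians---is exactly the content that the paper absorbs into the phrase ``it is straightforward,'' so neither route avoids it; the paper just packages it more economically. A minor slip: the adjunction you cite for the weight $-1$ check should read $\xi^1_!\dashv(\xi^1)^*$, not $(\xi^1)^*\dashv\xi^1_!$; the unit in the positivity condition is that of $\xi^1_!\dashv(\xi^1)^*$ applied to $k[t_1]$, and the resulting extension is indeed multiplication by $t_1$, as you intend.
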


\begin{proof}
We consider the objects $\{\A_e\}_{e\in \Sgh_1}$ as coCartesian sections of the Grothendieck construction of $\hF_{\on{cut}}$. It is straightforward to see that evaluating these coCartesian sections yields an arc system kit with $L_e=\A_e(e)$ and $L_{v,a}=\A_e(v)$ for each edge $e$ of $\rg$ and halfedge $a$ of $e$ lying at the vertex $v$. Note for this that at each $1$-valent vertex $v$ with halfedge $a$ of $\rg_{\on{tri},\on{cut}}$ arising from a cut-off loop, the datum $L_{v,a}$ is given by a shift of $k[t_m]\in \D(k[t_m])\simeq \mathcal{F}'_{\on{cut}}(v)$. The positivity of this arc system kit is also clear, where in the case $m=1$ we use that the equivalence \eqref{eq:twistktm} arises from the cofiber sequence \eqref{eq:twist_cof_seq_ktm}.
\end{proof}

\begin{remark}
    The stable $\infty$-category $\hC_{\eS}$ generated by $\A_{\Sgh}$ is the $\infty$-categorical version of the topological Fukaya category of the marked surface ${\bf S}^{\on{blw}}$ (without $\on{Ind}$-completing).
\end{remark}


\bibliography{biblio}

@Article{BMQS,
 Author = {Barbieri, Anna and M{\"o}ller, Martin and Qiu, Yu and So, Jeonghoon},
 Title = {Quadratic differentials as stability conditions: collapsing subsurfaces},
 FJournal = {Journal f{\"u}r die Reine und Angewandte Mathematik},
 Journal = {J. Reine Angew. Math.},
 ISSN = {0075-4102},
 Volume = {810},
 Pages = {49--95},
 Year = {2024},
 Language = {English},
 DOI = {10.1515/crelle-2024-0005},
 Keywords = {30-XX,18-XX},
 zbMATH = {7840484},
 note = {\href{https://arxiv.org/abs/2212.08433}{{\sf arxiv:2212.08433}}}
}

@article {Ike17,
    AUTHOR = {Ikeda, A.},
     TITLE = {Stability conditions on {${\rm CY}_N$} categories associated
              to {$A_n$}-quivers and period maps},
   JOURNAL = {Math. Ann.},
  FJOURNAL = {Mathematische Annalen},
    VOLUME = {367},
      YEAR = {2017},
    NUMBER = {1-2},
     PAGES = {1--49},
      ISSN = {0025-5831},
   MRCLASS = {16G10 (14F05 16E35 18E30 30C10)},
  MRNUMBER = {3606433},
MRREVIEWER = {Calin Chindris},
       DOI = {10.1007/s00208-016-1375-4},
       URL = {https://doi.org/10.1007/s00208-016-1375-4},
}

@Article{BQS,
 Author = {Bridgeland, T. and Qiu, Y. and Sutherland, T.},
 Title = {Stability conditions and the {{\(A_2\)}} quiver},
 FJournal = {Advances in Mathematics},
 Journal = {Adv. Math.},
 ISSN = {0001-8708},
 Volume = {365},
 Pages = {33},
 Note = {Id/No 107049},
 Year = {2020},
 Language = {English},
 DOI = {10.1016/j.aim.2020.107049},
 Keywords = {16G20,16E35},
 zbMATH = {7184853},
 Zbl = {1481.16014}
}

@Article{B1,
 Author = {Bridgeland, T.},
 Title = {Stability conditions on triangulated categories},
 FJournal = {Annals of Mathematics. Second Series},
 Journal = {Ann. Math. (2)},
 ISSN = {0003-486X},
 Volume = {166},
 Number = {2},
 Pages = {317--345},
 Year = {2007},
 Language = {English},
 DOI = {10.4007/annals.2007.166.317},
 Keywords = {18E30,81T30,14D21},
 zbMATH = {5248860},
 Zbl = {1137.18008}
}

@Article{BS,
 Author = {Bridgeland, T. and Smith, I.},
 Title = {Quadratic differentials as stability conditions},
 FJournal = {Publications Math{\'e}matiques},
 Journal = {Publ. Math., Inst. Hautes {\'E}tud. Sci.},
 ISSN = {0073-8301},
 Volume = {121},
 Pages = {155--278},
 Year = {2015},
 Language = {English},
 DOI = {10.1007/s10240-014-0066-5},
 Keywords = {14F05,18E30,14D20,81T20},
 URL = {eprints.whiterose.ac.uk/84399/1/BrSm_arXiv-Version2-Corrected.pdf},
 zbMATH = {6460465},
 Zbl = {1328.14025}
}

@article{BCSPW,
    author = {Nathan Broomhead and Raquel Coelho Simoes and David Pauksztello and Jon Woolf},
    title = {Simple tilts of length hearts and simple-minded mutation},
    journal = {\href{https://arxiv.org/abs/2401.02947}{{\sf arXiv:2401.02947}}},
    year = {2024}
}

@Article{KQ,
 Author = {King, A. and Qiu, Y.},
 Title = {Exchange graphs and {Ext} quivers},
 FJournal = {Advances in Mathematics},
 Journal = {Adv. Math.},
 ISSN = {0001-8708},
 Volume = {285},
 Pages = {1106--1154},
 Year = {2015},
 Language = {English},
 DOI = {10.1016/j.aim.2015.08.017},
 Keywords = {16G20,16E35,13F60},
 zbMATH = {6499690},
 Zbl = {1405.16021}
}

@misc{HKS,
  doi = {10.48550/ARXIV.2112.13623},
  url = {https://arxiv.org/abs/2112.13623},
  author = {Haiden, F. and Katzarkov, L. and Simpson, C.},
  keywords = {Algebraic Geometry (math.AG), Symplectic Geometry (math.SG), FOS: Mathematics, FOS: Mathematics},
  title = {Spectral networks and stability conditions for {Fukaya} categories with coefficients},
  publisher = {arXiv},
  year = {2021},
  copyright = {arXiv.org perpetual, non-exclusive license},
  note = {\href{https://arxiv.org/abs/2112.13623}{{\sf arXiv:2112.13623}}}
}

@Article{KQ2,
 Author = {King, Alastair and Qiu, Y.},
 Title = {Cluster exchange groupoids and framed quadratic differentials},
 FJournal = {Inventiones Mathematicae},
 Journal = {Invent. Math.},
 ISSN = {0020-9910},
 Volume = {220},
 Number = {2},
 Pages = {479--523},
 Year = {2020},
 Language = {English},
 DOI = {10.1007/s00222-019-00932-y},
 Keywords = {13F60,16G20,16E35},
 zbMATH = {7187476},
 Zbl = {1457.13045}
}

@Article{Q2,
 Author = {Qiu, Y.},
 Title = {Stability conditions and quantum dilogarithm identities for {Dynkin} quivers},
 FJournal = {Advances in Mathematics},
 Journal = {Adv. Math.},
 ISSN = {0001-8708},
 Volume = {269},
 Pages = {220--264},
 Year = {2015},
 Language = {English},
 DOI = {10.1016/j.aim.2014.10.014},
 Keywords = {18E30,16G20},
 zbMATH = {6374147},
 Zbl = {1319.18004}
}

@Article{QW,
 Author = {Qiu, Y. and Woolf, J.},
 Title = {Contractible stability spaces and faithful braid group actions},
 FJournal = {Geometry \& Topology},
 Journal = {Geom. Topol.},
 ISSN = {1465-3060},
 Volume = {22},
 Number = {6},
 Pages = {3701--3760},
 Year = {2018},
 Language = {English},
 DOI = {10.2140/gt.2018.22.3701},
 Keywords = {18E30,20F36,13F60,32Q55},
 zbMATH = {6945135},
 Zbl = {1423.18044}
}

@Article{seidel_grading,
 Author = {Seidel, P.},
 Title = {Graded {Lagrangian} submanifolds},
 FJournal = {Bulletin de la Soci{\'e}t{\'e} Math{\'e}matique de France},
 Journal = {Bull. Soc. Math. Fr.},
 ISSN = {0037-9484},
 Volume = {128},
 Number = {1},
 Pages = {103--149},
 Year = {2000},
 Language = {English},
 DOI = {10.24033/bsmf.2365},
 Keywords = {53D12,53D40,53C15,57R40},
 URL = {smf4.emath.fr/Publications/Bulletin/128/html/smf_bull_128_103-149.html},
 zbMATH = {1463890},
 Zbl = {0992.53059}
}

@Article{HKK17,
  author  = {F. Haiden and L. Katzarkov and M. Kontsevich},
  title   = {Flat surfaces and stability structures},
  journal = {Publ. Math. Inst. Hautes Études Sci,},
  number  = {1},
  volume  = {126},
  pages   = {247–318},
  year    = {2017},
}

@Article{Hai21,
  author  = {F. Haiden},
  title   = {3-d {C}alabi–{Y}au categories for {T}eichm\"uller theory},
  note    = {\href{https://arxiv.org/abs/2104.06018}{{\sf arxiv:2104.06018}}},
  journal = {Duke Math. J.},
  volume  = {173},
  number  = {2},
  pages   = {277-346},
  year    = {2024},
}

@article{DK15,
	author   = {T. Dyckerhoff and M. Kapranov},
	journal  = {Stacks and categories in geometry, topology, and algebra, Contemp. Math., 643, Amer. Math. Soc., Providence, RI},
	title    = {Crossed simplicial groups and structured surfaces},
	pages    = {37-110},
	year     = {2015},
}

@article{Chr20,
	author   = {M. Christ},
	journal  = {Int. Math. Res. Not. IMRN, rnac187},
	title    = {Spherical monadic adjunctions of stable infinity categories},
	year     = {2022},
}

@article{Chr22,
	author   = {M. Christ},
	journal  = {Forum Math. Sigma},
	volume   = {10},
	pages    = {e8},
	title    = {Ginzburg algebras of triangulated surfaces and perverse schobers},
	year     = {2022},
        DOI      = {\href{https://www.doi.org/10.1017/fms.2022.1}{{\sf  doi:10.1017/fms.2022.1}}},
}

@article{Chr21b,
	author   = {M. Christ},
	journal  = {\href{https://arxiv.org/abs/2107.10091}{{\sf arXiv:2107.10091}}},
	title    = {Geometric models for derived categories of {G}inzburg algebras of $n$-angulated surfaces via local-to-global principles},
	year     = {2021},
}

@article{Chr22b,
	author   = {M. Christ},
	journal  = {\href{https://arxiv.org/abs/2209.06595}{{\sf arXiv:2209.06595}}},
	title    = {Cluster theory of topological {F}ukaya categories},
	year     = {2022},
}

@article{Chr23,
	author   = {M. Christ},
	journal  = {\href{https://arxiv.org/abs/2311.16597}{{\sf arXiv:2311.16597}}},
	title    = {Relative {Calabi-Yau} structures and perverse schobers on surfaces },
	year     = {2023},
}

@article{CHQ24,
	author   = {M. Christ and F. Haiden and Y. Qiu},
	journal  = {Preprint},
	title    = {Perverse schobers, stability conditions and quadratic differentials {II}: relative graded {B}rauer graph algebras},
	year     = {2024},
}

@Book{HTT,
  title     = {Higher {T}opos {T}heory},
  publisher = {Annals of Mathematics Studies, vol. 170, Princeton University Press, Princeton, NJ. MR 2522659},
  year      = {2009},
  author    = {J. Lurie},
}

@Book{Ker,
  title     = {Kerodon},
  publisher = {online textbook, available at {\href{https://kerodon.net/}{{\sf https://kerodon.net/}}}},
  year      = {January, 2023},
  author    = {J. Lurie},
}

@article{HLS16,
title = {Autoequivalences of derived categories via geometric invariant theory},
journal = {Adv. Math.},
volume = {303},
pages = {1264-1299},
year = {2016},
author = {D. Halpern-Leistner and I. Shipman},
}

@Book{HA,
  title     = {Higher {A}lgebra},
  publisher = {preprint, available on the author's {\href{https://www.math.ias.edu/~lurie/papers/HA.pdf}{{\sf webpage}}}},
  year      = {2017},
  author    = {J. Lurie},
}

@Article{DKSS19,
  author  = {T. Dyckerhoff and M. Kapranov and V. Schechtman and Y. Soibelman},
  title   = {Spherical adjunctions of stable $\infty$-categories and the relative {S}-construction},
  journal = {\href{https://arxiv.org/pdf/2106.02873.pdf}{{\sf arXiv:2106.02873}}},
  year    = {2021},
}

@Article{KS16,
  author  = {M. Kapranov and V. Schechtman},
  title   = {Perverse sheaves and graphs on surfaces},
  journal = {\href{https://arxiv.org/abs/1601.01789}{{\sf arXiv:1601.01789}}},
  year    = {2016},
}

@Article{Dyc17,
  author  = {T. Dyckerhoff},
  title   = {A categorified {D}old-{K}an correspondence},
  year    = {2021},
  journal = {Selecta Math.},
  volume  = {27},
  number   = {2},
  DOI     = {\href{https://doi.org/10.1007/s00029-021-00618-5}{{\sf doi:10.1007/s00029-021-00618-5}}},
}

@Article{Smi15,
  author  = {I. Smith},
  title   = {Quiver {A}lgebras as {F}ukaya {C}ategories},
  journal = {Geom. Topol.},
  volume  = {19},
  number  = {5},
  pages   = {2557-2617},
  year    = {2015},
}

@Article{AL17,
  author  = {R. Anno and T. Logvinenko},
  title   = {Spherical {DG}-functors},
  journal = {J. Eur. Math. Soc.},
  volume  = {19},
  pages   = {2577-2656},
  year    = {2017},
}

@Article{KS14,
  author  = {M. Kapranov and V. Schechtman},
  title   = {Perverse {S}chobers},
  journal = {\href{https://arxiv.org/abs/1411.2772}{{\sf arXiv:1411.2772}}},
  year    = {2014},
}

@Book{Sei08,
  title     = {Fukaya Categories and Picard-Lefschetz Theory},
  publisher = {European Mathematical Society},
  year      = {2008},
  author    = {P. Seidel},
}

@Book{SAG,
  title     = {Spectral Algebraic Geometry},
  publisher = {preprint, available on the author's {\href{https://www.math.ias.edu/~lurie/papers/SAG-rootfile.pdf}{{\sf webpage}}}},
  year      = {2018},
  author    = {J. Lurie},
}

@Article{BD19,
  author  = {C. Brav and T. Dyckerhoff},
  title   = {Relative {C}alabi-{Y}au structures},
  journal = {Compos. Math.},
  volume  = {155},
  number  = {2},
  pages   = {372-412},
  year    = {2019},
}

@Article{KN12,
  author  = {B. Keller and P. Nicolas},
  title   = {Weight structures and simple dg modules for positive dg algebras},
  journal = {Int. Math. Res. Not. },
  volume  = {2013},
  number   = {5},
  pages    = {1028–1078},
  year    = {2012},
}

@Article{IQ18,
  author  = {A. Ikeda and Y. Qiu},
  title   = {$q$-Stability conditions via {q}-quadratic differentials for {C}alabi-{Y}au-{X} categories},
  journal = {Memoirs of Amer. Math. Soc. (to appear), \href{https://arxiv.org/abs/1812.00010}{{\sf arXiv:1812.00010}}},
  year    = {2018},
}

@Article{Qiu16,
  author  = {Y. Qiu},
  title   = {Decorated marked surfaces: spherical twists versus braid twists},
  journal = {Math. Ann. 365, 595-633},
  year    = {2016},
}

@Article{D15,
  author  = {A. Dugas},
  title   = {Torsion pairs and simple-minded systems in triangulated categories},
  journal = {Appl. Categ. Struc.},
  volume  = {23},
  pages   = {507–526},
  year    = {2015},
}

@Article{KY14,
  author  = {S. Koenig and D. Yang},
  title   = {Silting objects, simple-minded collections, t-structures and co-t-structures for finite-dimensional algebras},
  journal = {Doc. Math.},
  volume  = {19},
  pages   = {403–438},
  year    = {2014},
}
\bibliographystyle{alpha}

\end{document}